\newtheorem{thm}{Theorem}[section]
\newtheorem{lem}[thm]{Lemma}
\newtheorem{cor}[thm]{Corollary}
\newtheorem{pro}[thm]{Proposition}
\newtheorem{ex}[thm]{Example}
\newtheorem{rmk}[thm]{Remark}
\newtheorem{defi}[thm]{Definition}
\newcommand {\emptycomment}[1]{}
\newcommand{\nc}{\newcommand}
\newcommand{\delete}[1]{}
\nc{\CV}{\mathbf{C}}
\newcommand{\CE}{\mathsf{CE}}
\newcommand{\LP}{\mathsf{LP}}
\newcommand{\ao}{\mathsf{AO}\mathsf{Lie}}
\newcommand{\rbo}{\mathsf{RBO}\mathsf{Lie}}
\newcommand{\rbol}{\mathsf{RBO}\mathsf{Leibniz}}
\newcommand{\rbola}{\mathsf{RBO}\mathsf{LeibnizA}}
\newcommand{\rbols}{\mathsf{RBO}\mathsf{LeibnizS}}
\newcommand{\lon }{\,\rightarrow\,}
\newcommand{\be }{\begin{equation}}
\newcommand{\ee }{\end{equation}}
\newcommand{\MC}{$\mathcal{MC}$}
\newcommand{\gr}{\theta}
\newcommand{\grl}{\bar{\varrho}^L}
\newcommand{\grr}{\bar{\varrho}^R}
\newcommand{\Lei}{\mathsf{Leib}}
\newcommand{\tn}{\mathfrak t}
\newcommand{\n}{\mathfrak n}
\newcommand{\g}{\mathfrak g}
\newcommand{\h}{\mathfrak h}
\newcommand{\la}{\mathfrak G} 
\newcommand{\huaB}{\mathcal{B}}
\newcommand{\huaR}{\mathcal{R}}
\newcommand{\huaF}{\mathcal{F}}
\newcommand{\huaG}{\mathcal{G}}
\newcommand{\huaHL}{\mathcal{HL}}
\newcommand{\huaP}{\mathcal{P}}
\newcommand{\huaC}{{\mathcal{C}}}
\newcommand{\huaI}{\mathcal{I}}
\newcommand{\huaH}{\mathcal{H}}
\newcommand{\huaO}{{\mathcal{O}}}
\newcommand{\huaZ}{\mathcal{Z}}
\newcommand{\frki}{\mathfrak i}
\newcommand{\frkC}{\mathfrak C}
\newcommand{\half}{\frac{1}{2}}
\newcommand{\Courant}[1]{\left\llbracket  #1\right\rrbracket }
\newcommand{\Id}{{\rm{Id}}}
\newcommand{\br}[1]{   [ \cdot,    \cdot  ]   }
\newcommand{\dM}{\mathrm{d}}
\newcommand{\Hom}{\mathrm{Hom}}
\newcommand{\Du}{\mathrm{Du}}
\newcommand{\BSu}{\mathrm{BSu}}
\newcommand{\Der}{\mathrm{Der}}
\newcommand{\Lie}{\mathrm{Lie}}
\newcommand{\gl}{\mathfrak {gl}}
\newcommand{\sln}{\mathfrak {sl}}     \newcommand{\dn}{\mathfrak {d}}
\newcommand{\kup}{Kupershmidt ~operator}
\nc{\oprn}{\theta}
\newcommand{\B}{\mathsf{B}}
\newcommand{\NR}{\mathsf{NR}}
\newcommand{\End}{\mathrm{End}}
\newcommand{\pr}{\mathrm{pr}}
\newcommand{\K}{\mathbb{K}}
\newcommand{\anti}{\mathrm{anti}}
\def\MC{\operatorname{MC}}
\newcommand{\rep}{\mathsf{Rep}}
\newcommand{\ARep}{\mathsf{ASRep}}
\newcommand{\SRep}{\mathsf{SRep}}
\newcommand{\GLA}{\mathsf{Gla}}
\newcommand{\RB}{\mathsf{C}}
\begin{document}

\title[A unified approach]{From relative Rota-Baxter operators and relative averaging
operators on Lie algebras to relative Rota-Baxter operators on
Leibniz algebras: a uniform approach
}

\author{Rong Tang}
\address{Department of Mathematics, Jilin University, Changchun 130012, Jilin, China}
\email{tangrong@jlu.edu.cn}

\author{Yunhe Sheng}
\address{Department of Mathematics, Jilin University, Changchun 130012, Jilin, China}
\email{shengyh@jlu.edu.cn}

\author{Friedrich Wagemann}
\address{Laboratoire de Mathematiques Jean Leray, University de Nantes, France }
\email{wagemann@math.univ-nantes.fr}


\begin{abstract}
In this paper, first   we construct two subcategories (using symmetric representations and antisymmetric representations) of the category of  relative Rota-Baxter operators on Leibniz algebras, and establish the relations with the categories  of  relative Rota-Baxter operators and  relative averaging  operators on Lie algebras. Then we show that there is a short exact sequence describing the relation between the controlling algebra of relative Rota-Baxter operators on a Leibniz algebra with respect to a symmetric (resp. antisymmetric) representation and the controlling algebra of the induced relative Rota-Baxter operators (resp. averaging operators) on the canonical Lie algebra associated to the Leibniz algebra. Finally,  we show that there is a long exact sequence   describing the relation between the cohomology groups of a relative Rota-Baxter operator  on a Leibniz algebra with respect to a symmetric (resp. antisymmetric) representation and the   cohomology groups of the induced relative Rota-Baxter operator (resp. averaging operator) on the canonical Lie algebra.

\end{abstract}


\keywords{Rota-Baxter operator, averaging operator, cohomology, Lie algebra, Leibniz algebra}

\maketitle

\tableofcontents

\allowdisplaybreaks


\section{Introduction}

The purpose of this paper is to provide a unified approach to study relative Rota-Baxter operators and relative averaging operators on Lie algebras.

\vspace{-1mm}
\subsection{Relative Rota-Baxter operators on Lie algebras}

G. Baxter introduced the concept of a Rota-Baxter commutative algebra  \cite{Ba} in his study of fluctuation theory in probability. A Rota-Baxter  operator of weight zero  on an associative algebra $(A,\cdot)$ is a linear map $\huaR:A\lon A$ such that
$$
(\huaR x)\cdot (\huaR y)=\huaR((\huaR x)\cdot y+ x\cdot (\huaR y)),\quad \forall x,y\in A.
$$
These operators with general weights have been found many applications in recent years, including  the algebraic approach of Connes-Kreimer \cite{CK} to renormalization of perturbative quantum field theory,  quantum analogue of Poisson geometry \cite{Uchino08},    twisting on associative algebras \cite{Uchino}, dendriform algebras and associative
Yang-Baxter equations \cite{Aguiar}. The notion of Rota-Baxter operators on Lie algebras was introduced independently in the 1980s as the operator form of the classical Yang-Baxter equation.  A linear operator $\huaR:\g\longrightarrow\g$ on a Lie algebra $\g$   is called a Rota-Baxter operator of weight zero if the following condition is satisfied:
$$
  [\huaR(x),\huaR(y)]_\g=\huaR([\huaR(x),y]_\g+[x,\huaR(y)]_\g),\quad\forall~x,y\in\g.
$$
Moreover, Kupershmidt  introduced the notion of a relative Rota-Baxter operator (also called an $\huaO$-operator) on a Lie algebra $\g$ with respect to  arbitrary representation in  \cite{Kuper1}.
  Note that 
  a skew-symmetric classical $r$-matrix is a relative Rota-Baxter operator on a Lie algebra with respect to the coadjoint representation. Relative Rota-Baxter operators play important roles in the study of integrable systems \cite{BGN2010,Semonov-Tian-Shansky}, provide solutions of the classical Yang-Baxter equation in the semidirect product Lie algebra and give rise to pre-Lie algebras \cite{Bai}.   See   \cite{Gub} for more details. Recently, the deformation and homotopy theory of  relative Rota-Baxter Lie algebras and relative Rota-Baxter associative algebras  were established in \cite{Das,LST,TBGS-1,TBGS-2}.

\subsection{Averaging operators on Lie algebras}
An averaging operator on an associative algebra $(A,\cdot)$ is a linear operator $\huaR:A\lon A$ such that
$$
(\huaR x)\cdot (\huaR y)=\huaR((\huaR x)\cdot y)=\huaR( x\cdot (\huaR y)),\quad \forall x,y\in A.
$$
In the last century, many studies on averaging operators were done for various special algebras, such as function spaces, Banach algebras, and the topics and methods were largely
analytic \cite{Barnett,Brainerd,Huijsmans,Rota}.  Recently, it was found that commutative double algebra structures  \cite{Goncharov} on a vector space $V$  can be described by symmetric averaging operators on the associative algebra $\End(V)$.  Averaging operators  can be defined on algebras over arbitrary binary operad  \cite{Aguiar}. In particular,
 a linear operator $\huaR:\g\longrightarrow\g$ on a Lie algebra $\g$   is called an averaging operator if the following condition is satisfied:
$$
  [\huaR(x),\huaR(y)]_\g=\huaR([\huaR(x),y]_\g ),\quad\forall~x,y\in\g.
$$
There is a close relation between the  classical Yang-Baxter equation, conformal algebras  and averaging operators on Lie algebras \cite{Kolesnikov}.

Averaging operators on Lie algebras are also called embedding tensors in some mathematical physics literatures.   Embedding tensors  and their associated tensor hierarchies provide an
 algebraic  and efficient way to construct supergravity theories and further to  construct  higher gauge theories (see e.g. \cite{BH, Hoh, KS, Str19}). Recently the controlling algebra and the cohomology theory of embedding tensors were established in \cite{STZ} using the higher derived bracket. Meanwhile the cohomology and homotopy theory of averaging associative algebras were studied in \cite{WZ}.

The concept of Rota-Baxter operators and averaging operators were further studied on the level of algebraic operads  in \cite{BBGN,PBG,Pei-Bai-Guo-Ni,PG,Uchino-2,Vallette}.  The action of the Rota-Baxter operator on
 a binary quadratic operad splits the operad, and the action of the averaging operator on a binary quadratic operad
duplicates the operad.  Since the splitting and duplication processes are in Koszul duality, one can regard the Rota-Baxter operator and averaging operator to be Koszul dual to each other. 

\subsection{Relative Rota-Baxter operators on Leibniz algebras}
Leibniz algebras were first discovered by Bloh who called them D-algebras  \cite{Bloh}. Then Loday rediscovered this algebraic structure and called them Leibniz algebras with the motivation in the study of the periodicity in algebraic K-theory \cite{Loday,Loday and Pirashvili,Loday-K}. Averaging operators on Lie algebras give rise to Leibniz algebras, which can be viewed as the duplication of Lie algebras. From the viewpoint of the operad theory,
leibniz algebras are defined as the algebras over the $Leibniz$ operad which is the  duplicator of the $Lie$ operad \cite{Pei-Bai-Guo-Ni}. More intrinsically, let $\huaP$ be a binary quadratic operad. The algebra duplicates by an averaging operator on the $\huaP$-algebra is a $\Du(\huaP)$-algebra. On the other hand, the algebra splits by a Rota-Baxter operator on the $\huaP$-algebra is a $\BSu(\huaP)$-algebra. Since $\Du(\huaP)$ is the Koszul dual of $\BSu(\huaP^!)$ \cite{Pei-Bai-Guo-Ni},
the duplication construction of an averaging operator  is a kind of Koszul dual of the splitting construction of a Rota-Baxter operator \cite{BBGN,Pei-Bai-Guo-Ni}. 
The (co)homology and homotopy theories of Leibniz algebras were established in \cite{FW,Loday and Pirashvili,Pirashvili,ammardefiLeibnizalgebra}.   Recently Leibniz algebras were studied from different aspects due to applications in both mathematics and physics.
In particular, integration of Leibniz algebras were studied in \cite{BW,Int1} and deformation quantization of Leibniz algebras was studied in \cite{DW}. As the underlying algebraic structures of embedding tensors and Courant algebroids, Leibniz algebras also have application in higher gauge theories \cite{BH,KS,SW} and homogeneous spaces \cite{Kinyon}. The notion of relative Rota-Baxter operators on Leibniz algebras was introduced in \cite{ST} to study Leibniz bialgebras.  Moreover, the cohomology theory of relative Rota-Baxter operators on Leibniz algebras was given in \cite{TSZ} and used to classify linear deformations and formal deformations.
\vspace{-2mm}

\subsection{Outline of the paper}

In this paper, we propose a unified approach to study relative Rota-Baxter operators and relative averaging operators on Lie algebras using relative Rota-Baxter operators on Leibniz algebras. First we observe that there are forgetful functors from the categories of relative Rota-Baxter operators and relative averaging operators on Lie algebras to the category of relative Rota-Baxter operators on Leibniz algebras. Conversely, we construct two functors from certain subcategories of the category of relative Rota-Baxter operators on Leibniz algebras to the categories of relative Rota-Baxter operators and relative averaging operators on Lie algebras, and show that they are left adjoint for the aforementioned forgetful functors. Then we show that the controlling algebras of relative Rota-Baxter operators   and relative averaging  operators on Lie algebras can be derived from the controlling algebra of relative Rota-Baxter operators    on Leibniz algebras. Finally, we give a long exact sequence to describe the relation between  the cohomology groups of a relative Rota-Baxter operator on a Leibniz algebra with respect to a symmetric (resp. antisymmetric) representation    and the cohomology groups of the induced relative Rota-Baxter operator (resp. averaging operator)    on  the canonical Lie algebra.  The concrete relations can be summarized by the following diagrams:
\begin{equation*}
\begin{split}
 \xymatrix{\text{$\rbols$} \ar[rr]^{\text{Theorem \ref{adjoint}}}\ar[d]_{\text{descendent}} &  & \text{$\rbo$}\ar[d]^{\text{descendent}}  \\
\text{Lie algebras}\ar[rr]^{ \text{\Id}} & &\text{Lie algebras}}
\,\, \xymatrix{\text{$\rbola$} \ar[rr]^{\text{Theorem \ref{AO-adjoint}}}\ar[d]_{\text{descendent}} &  & \text{$\ao$}\ar[d]^{\text{descendent}}  \\
\text{Leibniz algebras}\ar[rr]^{ \text{\Id}} & &\text{Leibniz algebras.}}
\end{split}
\end{equation*}
The paper is organized as follows. In Section \ref{sec:L}, using symmetric representations and antisymmetric representations, we construct two subcategories of the category of  relative Rota-Baxter operators on Leibniz algebras, and establish the relations with the categories  of  relative Rota-Baxter operators   and    relative averaging  operators on Lie algebras. In Section \ref{sec:control}, we show that there is a short exact sequence describing the relation between the controlling algebra of relative Rota-Baxter operators on a Leibniz algebra with respect to a symmetric representation and the controlling algebra of the induced relative Rota-Baxter operators on the canonical Lie algebra. For relative averaging operators, we establish a similar result. In Section \ref{sec:cohomology}, first we introduce the Loday-Pirashvili cohomology of a relative  Rota-Baxter operator on  a  Lie algebra and then use a long exact sequence to describe the relation between the cohomology groups of a relative Rota-Baxter operator  on a Leibniz algebra with respect to a symmetric representation and the Loday-Pirashvili cohomology groups of the induced relative Rota-Baxter operator  on the canonical Lie algebra. Similarly, there is a long exact sequence   describing the relation between the cohomology groups of a relative Rota-Baxter operator  on a Leibniz algebra with respect to an antisymmetric representation and the   cohomology groups of the induced relative averaging  operator  on the canonical Lie algebra.

In this paper, we work over an algebraically closed field $\K$ of characteristic 0 and all the vector spaces are over $\K$ and finite-dimensional.

\section{Relations between the categories $\ao$, $\rbo$ and  $\rbol$}\label{sec:L}

In this section, we establish the relations between certain subcategories of the category of  relative Rota-Baxter operators on Leibniz algebras and the category  of  relative Rota-Baxter operators on Lie algebras as well as the category of  relative averaging  operators on Lie algebras.

\subsection{Relative Rota-Baxter operators and relative averaging operators}

  \begin{defi}{\rm (\cite{Kuper1})}
 A  {\bf relative Rota-Baxter  operator}  on a  Lie algebra $(\g,[-,-]_\g)$ with respect to a representation $ (V;\rho) $ is a linear map $T:V\longrightarrow\g$ satisfying the following quadratic constraint:
 \begin{equation}  \label{eq:rbo}
   [Tu,Tv]_\g=T\big(\rho(Tu)(v)-\rho(Tv)(u)\big),\quad\forall u,v\in V.
 \end{equation}

  Let $T:V\longrightarrow\g$ (resp. $T':V'\longrightarrow\g'$) be a relative Rota-Baxter  operator on the Lie algebra $(\g,[-,-]_\g)$ (resp. $(\g',\{-,-\}_{\g'})$) with respect to the representation $(V;\rho)$ (resp. $(V',\rho')$).   A {\bf homomorphism} from  $ T$ to $ T' $ is a pair $(\phi,\varphi)$, where $\phi:\g\longrightarrow\g'$ is a Lie algebra homomorphism, $\varphi:V\longrightarrow V'$ is a linear map such that
         \begin{eqnarray}
          T'\circ \varphi&=&\phi\circ T,\label{defi:isocon1rb}\\
                \varphi\rho(x)(u)&=&\rho'(\phi(x))(\varphi(u)),\quad\forall x\in\g, u\in V.\label{defi:isocon2rb}
      \end{eqnarray}
   In particular, if $\phi$ and $\varphi$ are  invertible,  then $(\phi,\varphi)$ is called an  {\bf isomorphism}.
 \end{defi}
We denote by $\rbo$ the category of relative Rota-Baxter  operators on Lie algebras.

 \begin{defi}{\rm (\cite{Aguiar,Rota})}
 A  {\bf relative averaging operator}  on a  Lie algebra $(\g,[-,-]_\g)$ with respect to a representation $ (V;\rho) $ is a linear map $T:V\longrightarrow\g$ satisfying the following quadratic constraint:
 \begin{equation}\label{eq:aocon}
   [Tu,Tv]_\g=T\big(\rho(Tu)(v)\big),\quad\forall u,v\in V.
 \end{equation}

  Let $T:V\longrightarrow\g$ (resp. $T':V'\longrightarrow\g'$) be a relative averaging operator on the Lie algebra $(\g,[-,-]_\g)$ (resp. $(\g',\{-,-\}_{\g'})$) with respect to the representation $(V;\rho)$ (resp. $(V',\rho')$).   A {\bf homomorphism} from  $ T$ to $ T' $ is a pair $(\phi,\varphi)$, where $\phi:\g\longrightarrow\g'$ is a Lie algebra homomorphism, $\varphi:V\longrightarrow V'$ is a linear map  such that
         \begin{eqnarray}
          T'\circ \varphi&=&\phi\circ T,\label{defi:isocon1}\\
                \varphi\rho(x)(u)&=&\rho'(\phi(x))(\varphi(u)),\quad\forall x\in\g, u\in V.\label{defi:isocon2}
      \end{eqnarray}

      In particular, if $\phi$ and $\varphi$ are  invertible,  then $(\phi,\varphi)$ is called an  {\bf isomorphism}.
 \end{defi}

 We denote by $\ao$ the category of relative averaging operators on Lie algebras.

A (left) {\bf Leibniz algebra}  is a vector space $\la$ together with a bilinear operation $[-,-]_\la:\la\otimes\la\lon\la$ such that
\begin{eqnarray*}
\label{Leibniz}[x,[y,z]_\la]_\la=[[x,y]_\la,z]_\la+[y,[x,z]_\la]_\la,\quad\forall x,y,z\in\la.
\end{eqnarray*}

A {\bf representation} of a Leibniz algebra $(\la,[-,-]_{\la})$ is a triple $(V;\rho^L,\rho^R)$, where $V$ is a vector space, $\rho^L,\rho^R:\la\lon\gl(V)$ are linear maps such that  for all $x,y\in\la$,
\begin{eqnarray*}
\rho^L([x,y]_{\la})&=&[\rho^L(x),\rho^L(y)],\\
\rho^R([x,y]_{\la})&=&[\rho^L(x),\rho^R(y)],\\
\rho^R(y)\circ \rho^L(x)&=&-\rho^R(y)\circ \rho^R(x).
\end{eqnarray*}
Here $[-,-]$ is the commutator Lie bracket on $\gl(V)$. There are two important kinds of representations of Leibniz algebras:
 \begin{itemize}
   \item a representation $(V;\rho^L,\rho^R)$ is called {\bf symmetric} if $\rho^R=-\rho^L$;

   \item a representation $(V;\rho^L,\rho^R)$ is called {\bf antisymmetric} if $\rho^R=0$.
 \end{itemize}

For any representation $(V;\rho^L,\rho^R)$ of a Leibniz algebra $\la$, we define a vector subspace $V_{\rm anti}$ which is spanned by all elements  $\rho^L(x)v+\rho^R(x)v$ with $x\in\la$ and $v\in V$. Directly from $\rho^R(y)\circ \rho^L(x)=-\rho^R(y)\circ \rho^R(x)$ for all $x,y\in\la$, we deduce that $(V_{\rm anti};\rho^L,\rho^R)$ is an antisymmetric representation. The quotient representation $V/V_{\rm anti}$ is then symmetric, denoted by $V_{\rm sym}$, and we have an exact sequence
\begin{equation}   \label{short_exact_sequence}
0\to V_{\rm anti}\to V\to V_{\rm sym}\to 0.
\end{equation}

 Now we recall  the notion of a relative Rota-Baxter operator on a Leibniz algebra.
\begin{defi}{\rm (\cite{ST})}
Let $(V;\rho^L,\rho^R)$ be a representation of a Leibniz algebra $(\la,[-,-]_\la)$. A linear operator $T:V\lon\la$ is called a {\bf relative Rota-Baxter operator} on $(\la,[-,-]_\la)$ with respect to $(V;\rho^L,\rho^R)$ if $T$ satisfies:
\begin{eqnarray}\label{Rota-Baxter}
[Tu,Tv]_\la=T(\rho^L(Tu)v+\rho^R(Tv)u),\,\,\,\,\forall u,v\in V.
\end{eqnarray}

Let $T:V\longrightarrow\la$ (resp. $T':V'\longrightarrow\la'$) be a relative Rota-Baxter  operator on the Leibniz algebra $(\la,[-,-]_\la)$ (resp. $(\la',\{-,-\}_{\la'})$) with respect to the representation $(V;\rho^L,\rho^R)$ (resp. $(V;{\rho^L}',{\rho^R}')$).   A {\bf homomorphism} from  $T$ to $T'$ is a pair $(\phi,\varphi)$, where $\phi:\la\longrightarrow\la'$ is a Leibniz algebra homomorphism, $\varphi:V\longrightarrow V'$ is a linear map  such that for all $x\in\la, u\in V$,
         \begin{eqnarray}
          T'\circ \varphi&=&\phi\circ T,\label{defi:isocon1rbl}\\
                \varphi{\rho^L}(x)(u)&=&{\rho^L}'(\phi(x))(\varphi(u)), \label{defi:isocon2rbl}\\
                \varphi{\rho^R}(x)(u)&=&{\rho^R}'(\phi(x))(\varphi(u)). \label{defi:isocon3rbl}
      \end{eqnarray}
      In particular, if $\phi$ and $\varphi$ are  invertible,  then $(\phi,\varphi)$ is called an  {\bf isomorphism}  from $T$ to $T'$.
\end{defi}

We denote by $\rbol$ the category of relative Rota-Baxter  operators on Leibniz algebras.  Moreover, we denote by  $\rbols$ and  $\rbola$ the categories of relative Rota-Baxter  operators on Leibniz algebras with respect to symmetric representations and antisymmetric representations respectively.

\subsection{Relation to crossed modules}

Note that relative Rota-Baxter operators on Lie algebras and Leibniz algebras $T:V\to\g$ resp. $T:V\to\la$ as well as relative averaging operators on Lie algebras $T:V\to \g$ give rise to a bracket on $V$.

 For a relative Rota-Baxter operator $T:V\to\g$ on a Lie algebra $\g$, the bracket
$$[v,w]_T:=\rho(Tv)w-\rho(Tw)v$$
for all $v,w\in V$ renders $V$ a Lie algebra such that $T:V\to\g$ becomes a morphism of Lie algebras.

In the same vein, given a relative Rota-Baxter operator $T:V\to\la$ on a Leibniz algebra $\la$, the bracket
$$[v,w]_T:=\rho^L(Tv)w+\rho^R(Tw)v$$
for all $v,w\in V$ renders $V$ a Leibniz algebra such that $T:V\to\la$ becomes a morphism of Leibniz algebras.

Still in the same vein, given a relative averaging operator $T:V\to\g$ on a Lie algebra $\g$, the bracket
$$[v,w]_T:=\rho(Tv)w$$
for all $v,w\in V$ renders $V$ a Leibniz algebra such that $T:V\to\g$ is a morphism of Leibniz algebras.

The next proposition shows that with an equivariance condition, the action of $\g$ (resp. $\la$) on V becomes an action by derivations with respect to these brackets. This kind of equivariance condition is present, for example, in Loday-Pirashvili's tensor category of linear maps, see \cite{Loday-Pirashvili}.

\begin{pro}
\begin{enumerate}
\item Let $T:V\to\g$ be a relative Rota-Baxter operator on a Lie algebra and suppose that $T$ is equivariant in the sense that $T(\rho(x)v)=[x,Tv]_\g$ for all $x\in\g$ and all $v\in V$. We deduce that $\rho(x)$ for all $x\in\g$ is a derivation of the Lie algebra $(V,[-,-]_T)$. Thus, we obtain that $\rho:\g\lon\Der(V)$ is homomorphism of Lie algebras.
\item Let $T:V\to\la$ be a relative Rota-Baxter operator on a Leibniz algebra and suppose that $T$ is equivariant in the sense that $T(\rho^L(x)v)=[x,Tv]_\la$ and $T(\rho^R(x)v)=[Tv,x]_\la$ for all $x\in\g$ and all $v\in V$. Then $\la$ acts on $(V,[-,-]_T)$ by derivations (in the Leibniz sense), i.e. for all $x\in \la$ and all $v,w\in V$
\begin{align*}
&\rho^L(x)[v,w]_T = [\rho^L(x)v,w]_T+[v,\rho^L(x)w]_T,\\
&\rho^R(x)[v,w]_T =[v,\rho^R(x)w]_T-[w,\rho^R(x)v]_T,\\
&[\rho^L(x)v,w]_T +[\rho^R(x)v,w]_T=0.
\end{align*}

\end{enumerate}
\end{pro}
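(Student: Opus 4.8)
The plan is to verify every claimed identity by a direct expansion: replace each bracket $[-,-]_T$ by its defining formula, convert every expression of the form $T(\rho(x)v)$ (resp. $T(\rho^L(x)v)$, $T(\rho^R(x)v)$) into a structure bracket in $\g$ (resp. $\la$) using the equivariance hypothesis, and then expand those structure brackets with the representation axioms. In each case the assertion collapses to the vanishing of a list of terms that either cancel in pairs or are annihilated by one of the representation axioms, so no genuinely new idea is needed beyond careful bookkeeping.

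For part (1) I would expand
\[
\rho(x)[v,w]_T = \rho(x)\rho(Tv)w - \rho(x)\rho(Tw)v,
\]
and, on the other side,
\[
[\rho(x)v,w]_T + [v,\rho(x)w]_T = \rho(T\rho(x)v)w - \rho(Tw)\rho(x)v + \rho(Tv)\rho(x)w - \rho(T\rho(x)w)v.
\]
Applying equivariance $T\rho(x)v=[x,Tv]_\g$ and the representation identity $\rho([x,y]_\g)=[\rho(x),\rho(y)]$ rewrites $\rho(T\rho(x)v)$ as $\rho(x)\rho(Tv)-\rho(Tv)\rho(x)$; the two mixed terms $\rho(Tv)\rho(x)w$ and $\rho(Tw)\rho(x)v$ then cancel against their counterparts, leaving exactly $\rho(x)\rho(Tv)w-\rho(x)\rho(Tw)v$. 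This shows $\rho(x)\in\Der(V,[-,-]_T)$. Since $\rho$ is already a homomorphism from $\g$ into $\gl(V)$ whose image now lies in the Lie subalgebra $\Der(V)\subseteq\gl(V)$, the corestriction $\rho:\g\to\Der(V)$ is automatically a homomorphism of Lie algebras, with no further computation required.

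For part (2) the same scheme applies, but now the left and right actions must be tracked separately. The first identity is the exact analogue of part (1): after using $T\rho^L(x)v=[x,Tv]_\la$ together with $\rho^L([x,y]_\la)=[\rho^L(x),\rho^L(y)]$ and $\rho^R([x,y]_\la)=[\rho^L(x),\rho^R(y)]$, all cross terms cancel and both sides equal $\rho^L(x)\rho^L(Tv)w+\rho^L(x)\rho^R(Tw)v$. The third identity is the quickest: expanding $[\rho^L(x)v,w]_T$ and $[\rho^R(x)v,w]_T$ and inserting the two equivariance conditions $T\rho^L(x)v=[x,Tv]_\la$ and $T\rho^R(x)v=[Tv,x]_\la$ produces the pair $\rho^L([x,Tv]_\la)w+\rho^L([Tv,x]_\la)w$, which vanishes because $[\rho^L(x),\rho^L(Tv)]+[\rho^L(Tv),\rho^L(x)]=0$, together with the pair $\rho^R(Tw)\rho^L(x)v+\rho^R(Tw)\rho^R(x)v$, which vanishes by the degeneracy axiom $\rho^R(y)\circ\rho^L(x)=-\rho^R(y)\circ\rho^R(x)$.

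The step I expect to be the main obstacle is the second identity, since the right action occupies both the inner and the outer slot and the order of the bracket in $\la$ is reversed by the equivariance $T\rho^R(x)v=[Tv,x]_\la$. Here I would expand $[v,\rho^R(x)w]_T-[w,\rho^R(x)v]_T$, apply equivariance, and then expand $\rho^R([Tw,x]_\la)$ and $\rho^R([Tv,x]_\la)$ via $\rho^R([x,y]_\la)=[\rho^L(x),\rho^R(y)]$; the $\rho^L$–$\rho^R$ cross terms cancel in pairs, leaving $\rho^R(x)\rho^L(Tv)w-\rho^R(x)\rho^L(Tw)v$. Comparing with $\rho^R(x)[v,w]_T=\rho^R(x)\rho^L(Tv)w+\rho^R(x)\rho^R(Tw)v$, the two sides agree precisely because $\rho^R(x)\rho^R(Tw)=-\rho^R(x)\rho^L(Tw)$, which is again the degeneracy axiom. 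Thus the only inputs beyond the Lie-algebra case are the mixed axiom for $\rho^R$ and the degeneracy relation, and the whole proposition reduces to a sign-careful but mechanical cancellation.
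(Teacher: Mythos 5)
Your proposal is correct: each of the four identities checks out exactly as you describe, with the cross terms cancelling and the mixed axiom $\rho^R(y)\circ\rho^L(x)=-\rho^R(y)\circ\rho^R(x)$ handling the remaining terms in the second and third Leibniz identities. The paper states this proposition without proof, treating it as a routine computation, and your direct expansion using equivariance plus the representation axioms is precisely the argument the authors leave implicit.
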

 We deduce that  the pair $(\rho^L(x),\rho^R(x))$ for all $x\in\la$ is a biderivation \cite{Loday} of the Leibniz algebra $(V,[-,-]_T)$.
For relative averaging operators, equivariance will imply that the left action is by derivations, but defining the right operation to be the opposite of the left action or zero will not render this an action by derivations in the Leibniz sense in general.

Note that this proposition does not give rise to crossed modules, because the requirements $\rho(Tv)w=[v,w]$ in the Lie case and $\rho^L(Tv)w=[v,w]=\rho^R(w)v$ in the Leibniz case for $v,w\in V$ are not fullfilled. In the case of relative averaging operators, this requirement is fullfilled, but the action is not by derivations in general.
On the other hand, it is well known that crossed modules of Lie algebras provide examples of relative averaging operators, see Example 2.14 in \cite{STZ}.

\subsection{Relations between the categories   $\rbo$ and  $\rbols$}

First we construct a functor $ F$ from the category $\rbo$ of relative Rota-Baxter operators on Lie algebras to  the category $\rbols$ of relative Rota-Baxter operators on Leibniz algebras with respect to  symmetric representations.

 On  objects, the functor $ F$ is defined as follows. Let $T:V\longrightarrow\g$   be a relative Rota-Baxter operator on the Lie algebra $(\g,[-,-]_\g)$   with respect to the representation $(V;\rho)$. We view the Lie algebra $(\g,[-,-]_\g)$ as a Leibniz algebra. Moreover the Lie algebra representation $\rho$ gives rise to a symmetric representation $(V;\rho,-\rho)$ of the Leibniz algebra $(\g,[-,-]_\g)$. Then the equality \eqref{eq:rbo} means that  $T:V\longrightarrow\g$ is a relative Rota-Baxter operator on the Leibniz algebra $(\g,[-,-]_\g)$ with respect to the  symmetric representation $(V;\rho,-\rho)$.

 On  morphisms, the functor $ F$ is defined as follows. Let $T:V\longrightarrow\g$ (resp. $T':V'\longrightarrow\g'$) be a relative Rota-Baxter operator on the Lie algebra $(\g,[-,-]_\g)$ (resp. $(\g',\{-,-\}_{\g'})$) with respect to the representation $(V;\rho)$ (resp. $(V',\rho')$).  Let $(\phi,\varphi)$ be a   homomorphism  from the relative Rota-Baxter operator $ T$ to $ T'$. Then it is obvious that $(\phi,\varphi)$ is also a homomorphism between the above induced relative Rota-Baxter operators on Leibniz algebras.

It is straightforward to see that $ F$ defined above is indeed a functor.

In the sequel, we construct a functor $ G$ from the category $\rbols$ to the category $\rbo$.

Let $(\la,[-,-]_\la)$ be a Leibniz algebra. Denote by $\Lei(\la)$ the ideal of squares spanned by all elements $[x,x]_\la$ for all $x\in\la$. We call $\Lei(\la)$ the {\bf  Leibniz kernel} of $(\la,[-,-]_\la)$. Observe that $\Lei(\la)=V_{\rm anti}$ for the adjoint representation $(V=\la,\rho^L,\rho^R)$ with $\rho^L(x)=[x,-]$ and $\rho^R(x)=[-,x]$. The following result is obvious.
\begin{lem}\label{canonical-Lie}
Let $(\la,[-,-]_\la)$ be a Leibniz algebra. Then
$$
\la_\Lie:=\la/\Lei(\la)
$$
is naturally a Lie algebra in which we denote the Lie bracket by $[-,-]_{\la_\Lie}$.
 \end{lem}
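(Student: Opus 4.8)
The plan is to verify directly that the Leibniz bracket $[-,-]_\la$ descends to a well-defined bracket on the quotient $\la_\Lie = \la/\Lei(\la)$, and that the descended bracket is both antisymmetric and Jacobi. First I would recall the standard fact that in any Leibniz algebra the symmetric part of the bracket is controlled by the Leibniz kernel: polarizing $[x,x]_\la$ via $[x+y,x+y]_\la = [x,x]_\la + [x,y]_\la + [y,x]_\la + [y,y]_\la$ shows that $[x,y]_\la + [y,x]_\la \in \Lei(\la)$ for all $x,y\in\la$. This is the key algebraic identity; everything else follows from it.

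Next I would check that $\Lei(\la)$ is a two-sided ideal so that the quotient vector space carries a bracket. Since $\Lei(\la) = V_{\rm anti}$ for the adjoint representation, as noted in the excerpt just before the statement, this follows from the general observation there that $V_{\rm anti}$ is a subrepresentation; concretely, the (left) Leibniz identity gives $[x,[y,y]_\la]_\la = [[x,y]_\la, y]_\la + [y,[x,y]_\la]_\la$, and the right-hand side is a symmetric expression in the sense of lying in $\Lei(\la)$, so $[x, \Lei(\la)]_\la \subseteq \Lei(\la)$; the right-multiplication case is handled by combining this with the antisymmetry-modulo-$\Lei(\la)$ identity from the previous step. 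Hence the induced bracket $[\bar x, \bar y]_{\la_\Lie} := \overline{[x,y]_\la}$ is well defined on $\la_\Lie$.

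Finally I would verify the two Lie axioms on $\la_\Lie$. Antisymmetry is immediate: $[\bar x,\bar y]_{\la_\Lie} + [\bar y,\bar x]_{\la_\Lie} = \overline{[x,y]_\la + [y,x]_\la} = 0$ by the polarization identity, and in characteristic $0$ antisymmetry of the bracket is equivalent to $[\bar x,\bar x]_{\la_\Lie}=0$. For the Jacobi identity, I would pass the (left) Leibniz identity to the quotient: it reads $[\bar x,[\bar y,\bar z]]=[[\bar x,\bar y],\bar z]+[\bar y,[\bar x,\bar z]]$, and upon using the now-available antisymmetry to rewrite $[[\bar x,\bar y],\bar z]$ this rearranges into the usual Jacobi identity. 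The naturality claim (that this construction is functorial, implicit in the later use of the assignment $\la\mapsto\la_\Lie$) follows because any Leibniz homomorphism sends squares to squares and hence maps $\Lei(\la)$ into $\Lei(\la')$.

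I do not expect any genuine obstacle here; the statement is flagged as \textquotedblleft obvious\textquotedblright\ in the excerpt and the only point requiring a moment's care is confirming that $\Lei(\la)$ is a two-sided ideal rather than merely a left ideal, which is exactly where the polarization identity $[x,y]_\la+[y,x]_\la\in\Lei(\la)$ is used to convert left-invariance into right-invariance.
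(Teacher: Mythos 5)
Your proof is correct. The paper offers no argument at all (it declares the lemma obvious, having just observed that $\Lei(\la)=V_{\rm anti}$ for the adjoint representation and that such $V_{\rm anti}$ is always a subrepresentation), and your verification — polarization to get $[x,y]_\la+[y,x]_\la\in\Lei(\la)$, the two-sided ideal property, then antisymmetry and Jacobi on the quotient — is precisely the standard reasoning behind that remark. One small simplification worth noting: the right-ideal step needs no polarization, since the Leibniz identity applied to $[y,[y,x]_\la]_\la$ gives $[[y,y]_\la,x]_\la=0$ outright, i.e.\ squares lie in the left center of $\la$ (a fact the paper states immediately after the lemma), so right multiplication by elements of $\Lei(\la)$ is not merely $\Lei(\la)$-valued but zero.
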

 We call $\la_\Lie$ the  {\bf  canonical Lie algebra} associated to the Leibniz algebra $(\la,[-,-]_\la)$.
 Observe that $\la_\Lie=V_{\rm sym}$ for the adjoint representation $(V=\la,\rho^L,\rho^R)$.
 The equivalence class of $x\in\la$ in $\la_\Lie$ will be denoted by $\bar{x}$. Moreover, $\Lei(\la)$ is contained in the left center of the Leibniz algebra $\la.$ We denote by
\begin{equation}
  \pr:\la\lon\la_\Lie
\end{equation}
the natural projection from the Leibniz algebra $\la$ to the canonical Lie algebra $\la_\Lie$. Obviously, the map $\pr$ preserves the bracket operation, i.e. the following equality holds:
\begin{equation}
  \pr[x,y]_\la=[\pr(x),\pr(y)]_{\la_\Lie},\quad \forall x,y\in \la.
\end{equation}

\emptycomment{
\begin{lem}\label{lem:imp}
  Let $(V;\rho^L,\rho^R)$ be a representation of a Leibniz algebra $(\la,[-,-]_{\la})$. Then we have
  $$
  \rho^L([x,x]_\la)=0,\quad \forall x\in\la.
  $$
\end{lem}

Consider the Lie algebra $\la_\Lie$. Define $\gr:\la_\Lie\longrightarrow \gl(V)$ by
\begin{eqnarray}
  \gr(\bar{x})=\rho^L(x),\quad \forall x\in\la.
\end{eqnarray}

By Lemma \ref{lem:imp}, the linear map $\gr$ is well-defined.

\begin{lem}\label{lem:rep}
   Let $(V;\rho^L,\rho^R)$ be a representation of a Leibniz algebra $\la$. Then $(V;\gr)$ is a representation of the canonical Lie algebra $\la_\Lie.$
\end{lem}
\begin{proof}For all $x,y\in\la$, we have
  \begin{eqnarray*}
    \theta[\bar{x},\bar{y}]_{\la_\Lie}=\theta\overline{[x,y]_\la}=\rho^L([x,y]_\la)=[\rho^L(x),\rho^L(y)]=[\theta(\bar{x}),\theta(\bar{y})],
  \end{eqnarray*}
  which implies that  $(V;\gr)$ is a representation of the Lie algebra $\la_\Lie.$
\end{proof}
}

Let $(V;\rho^L,\rho^R)$ be a representation of a Leibniz algebra $(\la,[-,-]_{\la})$. Then $\rho^L:\la\lon\gl(V)$ is a Leibniz algebra homomorphism. We deduce that
 \begin{eqnarray}\label{lem:imp}
    \rho^L(\Lei(\la))=0.
  \end{eqnarray}
   Thus, there is exactly one Lie algebra homomorphism $\gr:\la_\Lie\longrightarrow \gl(V)$ which is defined by
\begin{eqnarray}\label{induce-rep}
  \gr(\bar{x})=\rho^L(x),\quad \forall x\in\la,
\end{eqnarray}
such that following diagram  of Leibniz algebra homomorphisms commute:
$$
\xymatrix{
   \la\ar[rr]^{\pr }\ar[dr]_{\rho^L} & & \la_\Lie\ar[dl]^{\gr} \\
     & \gl(V).&
    }
$$
It implies that  $(V;\gr)$ is a representation of the canonical Lie algebra  $\la_\Lie.$

\begin{pro}\label{pro:indRB}
Let $T:V\lon\la$ be   a relative Rota-Baxter operator  on a Leibniz algebra $(\la,[-,-]_{\la})$  with respect to a symmetric representation $(V;\rho^L,\rho^R=-\rho^L)$. Then $\bar{T}:=\pr\circ T:V\lon\la_\Lie$ is a relative Rota-Baxter  operator on the canonical Lie algebra  $\la_\Lie$ with respect to the representation $(V;\gr)$.
\end{pro}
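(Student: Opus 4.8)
The plan is to verify the quadratic constraint \eqref{eq:rbo} for $\bar T=\pr\circ T$ directly, by transporting the Leibniz Rota-Baxter identity \eqref{Rota-Baxter} for $T$ through the projection $\pr$. Since $(V;\gr)$ has already been identified as a representation of $\la_\Lie$, the only thing left to check is the quadratic identity, and everything hinges on two structural facts recorded above: that $\pr$ intertwines the two brackets, and that $\gr$ is defined by $\gr(\bar x)=\rho^L(x)$.

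First I would rewrite the left-hand side of \eqref{eq:rbo}. By definition $\bar T u=\pr(Tu)=\overline{Tu}$, and because $\pr$ preserves the bracket one has $[\bar Tu,\bar Tv]_{\la_\Lie}=\pr[Tu,Tv]_\la$. Then I would apply the defining identity \eqref{Rota-Baxter} for $T$, using the symmetry hypothesis $\rho^R=-\rho^L$ to write $[Tu,Tv]_\la=T\big(\rho^L(Tu)v-\rho^L(Tv)u\big)$. Applying $\pr$ to both sides and again using $\bar T=\pr\circ T$ yields $[\bar Tu,\bar Tv]_{\la_\Lie}=\bar T\big(\rho^L(Tu)v-\rho^L(Tv)u\big)$.

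Finally I would invoke \eqref{induce-rep}, which gives $\rho^L(Tu)=\gr(\overline{Tu})=\gr(\bar Tu)$ and similarly for $v$. Substituting turns the right-hand side into $\bar T\big(\gr(\bar Tu)v-\gr(\bar Tv)u\big)$, which is exactly the right-hand side of \eqref{eq:rbo} for the operator $\bar T$ and the representation $(V;\gr)$. This closes the verification.

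The argument is a short chain of substitutions, and I do not expect a genuine obstacle. The one point worth highlighting is the role of the symmetry assumption $\rho^R=-\rho^L$: the minus sign it contributes is precisely what produces the antisymmetric combination $\gr(\bar Tu)v-\gr(\bar Tv)u$ required on the Lie side, so that the Leibniz Rota-Baxter identity descends correctly to the Lie relative Rota-Baxter identity. Without this symmetry the right-hand side would retain an extra $\rho^R$-term that does not collapse into the Lie form, which is exactly why the construction is restricted to the subcategory $\rbols$.
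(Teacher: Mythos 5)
Your proof is correct and follows essentially the same route as the paper's: both apply the projection $\pr$ to the Leibniz Rota-Baxter identity \eqref{Rota-Baxter}, use the symmetry $\rho^R=-\rho^L$ to obtain the antisymmetric combination, and then rewrite $\rho^L(Tu)$ as $\gr(\bar{T}u)$ via \eqref{induce-rep}. Your write-up simply spells out the intermediate substitutions (and the role of the symmetry hypothesis) more explicitly than the paper's two-line computation.
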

\begin{proof}
  Applying the projection $\pr:\la\lon\la_\Lie$ to the both sides of \eqref{Rota-Baxter}, we obtain
  \begin{eqnarray*}
    [\pr\circ T(u),\pr\circ T(v)]_{\la_\Lie}&=&\pr\circ T(\rho^L(Tu)v-\rho^L(Tv)u)\\
    &=&\pr\circ T(\gr(\pr\circ T(u))v-\gr(\pr\circ T(v))u),
  \end{eqnarray*}
  which implies that $\pr\circ T:V\lon\la_\Lie$ is  a relative Rota-Baxter operator on the canonical Lie algebra $\la_\Lie$ with respect to the representation $(V;\gr)$.
\end{proof}

\begin{pro}\label{pro:rbmtorbm}
  Let $T:V\longrightarrow\la$ (resp. $T':V'\longrightarrow\la'$) be a relative Rota-Baxter  operator on the Leibniz algebra $(\la,[-,-]_\la)$ (resp. $(\la',\{-,-\}_{\la'})$) with respect to the symmetric representation $(V;\rho^L,\rho^R=-\rho^L)$ (resp. $(V;{\rho^L}',{\rho^R}'=-{\rho^L}')$).  Let $(\phi,\varphi)$ be a   homomorphism of relative Rota-Baxter operators on Leibniz algebras   from  $ T $ to $ T' $. Then $(\bar{\phi},\varphi)$ is a   homomorphism  of relative  Rota-Baxter operators on Lie algebras from  $ \bar{T} $ to $ \bar{T'} $, where the Lie algebra homomorphism $\bar{\phi}:\la_\Lie\lon\la'_\Lie$ is defined by
  $$
  \bar{\phi}(\bar{x}):=\overline{\phi(x)},\quad \forall x\in\la.
  $$
\end{pro}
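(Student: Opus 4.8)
The plan is to verify that the pair $(\bar\phi,\varphi)$ satisfies the two defining conditions \eqref{defi:isocon1rb} and \eqref{defi:isocon2rb} of a homomorphism of relative Rota-Baxter operators on Lie algebras, after first checking that $\bar\phi$ is a well-defined Lie algebra homomorphism. The only genuinely non-formal point is the well-definedness of $\bar\phi$; everything else is diagram-chasing built from the relation $\bar\phi\circ\pr=\pr'\circ\phi$.

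First I would establish that $\bar\phi$ is well-defined. Since $\phi:\la\lon\la'$ is a Leibniz algebra homomorphism, for any $x\in\la$ we have $\phi([x,x]_\la)=[\phi(x),\phi(x)]_{\la'}\in\Lei(\la')$, hence $\phi(\Lei(\la))\subseteq\Lei(\la')$. Consequently, if $\bar x=\bar y$, that is $x-y\in\Lei(\la)$, then $\phi(x)-\phi(y)\in\Lei(\la')$, so $\overline{\phi(x)}=\overline{\phi(y)}$ and the assignment $\bar\phi(\bar x):=\overline{\phi(x)}$ is independent of the chosen representative. Equivalently, this is the identity $\bar\phi\circ\pr=\pr'\circ\phi$, which I will use repeatedly. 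Because $\pr$, $\pr'$, and $\phi$ all preserve the respective bracket operations and $\pr$ is surjective, it follows immediately that $\bar\phi$ preserves the Lie bracket, so $\bar\phi:\la_\Lie\lon\la'_\Lie$ is a Lie algebra homomorphism.

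Next I would check condition \eqref{defi:isocon1rb}, namely $\bar{T'}\circ\varphi=\bar\phi\circ\bar T$. This is a direct composition chain: using $\bar T=\pr\circ T$ and $\bar{T'}=\pr'\circ T'$ (from Proposition \ref{pro:indRB}), the intertwining relation \eqref{defi:isocon1rbl}, and the identity $\pr'\circ\phi=\bar\phi\circ\pr$ just established, one obtains
$$
\bar{T'}\circ\varphi=\pr'\circ T'\circ\varphi=\pr'\circ\phi\circ T=\bar\phi\circ\pr\circ T=\bar\phi\circ\bar T.
$$
Finally I would verify the equivariance condition \eqref{defi:isocon2rb} for the induced representations $\gr$ on $\la_\Lie$ and $\gr'$ on $\la'_\Lie$, which satisfy $\gr(\bar x)=\rho^L(x)$ and $\gr'(\overline{x'})={\rho^L}'(x')$ by \eqref{induce-rep}. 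For $x\in\la$ and $u\in V$, unwinding the definitions and applying \eqref{defi:isocon2rbl} yields
$$
\varphi\,\gr(\bar x)(u)=\varphi\,\rho^L(x)(u)={\rho^L}'(\phi(x))(\varphi(u))=\gr'\big(\overline{\phi(x)}\big)(\varphi(u))=\gr'\big(\bar\phi(\bar x)\big)(\varphi(u)),
$$
which is exactly \eqref{defi:isocon2rb} for $(\bar\phi,\varphi)$.

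The computations above are all routine once $\bar\phi$ is in place; the one step that is not pure diagram-chasing is the well-definedness of $\bar\phi$, which is precisely where the hypothesis that $\phi$ respects the Leibniz bracket (and hence the Leibniz kernel) is used. I would also remark that the right-action compatibility \eqref{defi:isocon3rbl} plays no role here, consistent with the fact that the induced Lie-algebra representation $\gr$ only records the left action $\rho^L$; with this observation the argument is complete.
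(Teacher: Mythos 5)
Your proof is correct and follows essentially the same route as the paper: the key point in both is that $\phi(\Lei(\la))\subseteq\Lei(\la')$ (since $\phi$ sends squares to squares), which makes $\bar\phi$ a well-defined Lie algebra homomorphism, after which the two homomorphism conditions follow by the diagram chase you spell out. The paper compresses that verification into ``it is straightforward to deduce,'' so your write-up is simply a more explicit version of the same argument.
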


\begin{proof}
By $\phi(\Lei(\la))\subset \Lei(\la')$, we obtain that $\phi$ induces the Lie algebra homomorphism $\bar{\phi}:\la_\Lie\lon\la'_\Lie.$ Then it is straightforward to deduce that $(\bar{\phi},\varphi)$ is a   homomorphism  of relative averaging operators from  $\bar{T} $ to $\bar{T'} $.
\end{proof}

Now we are ready to give the main result in this subsection.
\begin{thm}\label{adjoint}
  There is a functor $G$ from the category $\rbols$ of relative Rota-Baxter operators on Leibniz algebras with respect to  symmetric representations to the category $\rbo$ of relative  Rota-Baxter operators on Lie algebras, such that $G$ is a left adjoint for $F$.
\end{thm}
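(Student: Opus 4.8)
The plan is to define $G$ using Propositions \ref{pro:indRB} and \ref{pro:rbmtorbm}, and then to establish the adjunction through the universal property of the projection $\pr\colon\la\to\la_\Lie$ onto the canonical Lie algebra. On objects I would set $G(T):=\bar T=\pr\circ T$, which lands in $\rbo$ by Proposition \ref{pro:indRB}, and on morphisms $G(\phi,\varphi):=(\bar\phi,\varphi)$, which is a morphism in $\rbo$ by Proposition \ref{pro:rbmtorbm}. Functoriality is then routine: since $\overline{\Id_\la}=\Id_{\la_\Lie}$ and $\overline{\phi'\circ\phi}=\bar{\phi'}\circ\bar\phi$ (both immediate from $\bar\phi(\bar x)=\overline{\phi(x)}$), the assignment preserves identities and composition.

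For the adjunction $G\dashv F$, the key observation is a factorization property. If $\g$ is a Lie algebra then $\Lei(\g)=0$, because $[x,x]_\g=0$ for all $x\in\g$; hence any Leibniz algebra homomorphism $\phi\colon\la\to\g$ satisfies $\phi(\Lei(\la))=0$ and therefore factors uniquely as $\phi=\psi\circ\pr$ for a unique Lie algebra homomorphism $\psi\colon\la_\Lie\to\g$. I would use this to build, for each object $T\colon V\to\la$ in $\rbols$ with symmetric representation $(V;\rho^L,-\rho^L)$ and each object $S\colon W\to\g$ in $\rbo$ with representation $(W;\sigma)$, a bijection
$$
\Hom_{\rbo}\big(G(T),S\big)\;\cong\;\Hom_{\rbols}\big(T,F(S)\big).
$$
Concretely, a morphism $(\psi,\varphi)\colon \bar T\to S$ is sent to $(\psi\circ\pr,\varphi)$, and conversely a morphism $(\phi,\varphi)\colon T\to F(S)$ is sent to $(\psi,\varphi)$, where $\psi$ is the unique factorization of $\phi$ through $\pr$. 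These two assignments are mutually inverse by the uniqueness of the factorization.

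It then remains to check that the defining conditions of morphisms match up under this correspondence. The equivariance conditions agree because $\gr(\bar x)=\rho^L(x)$ by \eqref{induce-rep}, so $\varphi\,\gr(\bar x)(u)=\sigma(\psi(\bar x))\varphi(u)$ is exactly $\varphi\,\rho^L(x)(u)=\sigma(\phi(x))\varphi(u)$; the right-action condition $\varphi\,\rho^R(x)(u)=-\sigma(\phi(x))\varphi(u)$ for $F(S)$, whose representation is $(W;\sigma,-\sigma)$, is then automatic from $\rho^R=-\rho^L$. The intertwining condition with the operators matches via $\bar T=\pr\circ T$: the identity $S\circ\varphi=\psi\circ\bar T$ is equivalent to $S\circ\varphi=(\psi\circ\pr)\circ T=\phi\circ T$. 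Finally I would verify naturality of the bijection in both $T$ and $S$, which is a routine diagram chase using that $\pr$ is natural and that $\bar\phi$ is defined by $\bar\phi(\bar x)=\overline{\phi(x)}$.

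Equivalently, and perhaps more transparently, one can present the adjunction through its unit and counit: the unit $\eta_T=(\pr,\Id_V)\colon T\to FG(T)$ is a morphism in $\rbols$ by the equivariance identity together with $\bar T=\pr\circ T$, while the counit is the identity, since $\g_\Lie=\g$ for every Lie algebra $\g$ forces $GF(S)=S$. The triangle identities are then immediate, and invertibility of the counit simply reflects that $F$ embeds $\rbo$ as a full reflective subcategory of $\rbols$. I expect the only point requiring genuine care to be the bookkeeping of the two equivariance conditions for symmetric representations; the conceptual heart of the argument is the single universal property of the assignment $\la\mapsto\la_\Lie$, so there is no serious obstacle beyond this matching.
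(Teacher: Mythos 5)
Your proposal is correct and follows essentially the same route as the paper: define $G$ on objects and morphisms via Propositions \ref{pro:indRB} and \ref{pro:rbmtorbm}, observe that $\Lei(\g)=0$ for a Lie algebra $\g$ gives $G\circ F=\Id_{\rbo}$ with identity counit, and obtain the adjunction from the unique factorization of morphisms through $\pr:\la\to\la_\Lie$. The paper packages this as the universal property of the counit while you package it as the hom-set bijection (plus the unit $(\pr,\Id_V)$), but these are equivalent formulations of the same argument, including the same verification of the equivariance conditions via \eqref{induce-rep}.
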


\begin{proof}
  Let $T:V\lon\la$ be a relative Rota-Baxter operator on a Leibniz algebra $(\la,[-,-]_\la)$ with respect to a symmetric representation $(V;\rho^L,\rho^R=-\rho^L)$. By Proposition \ref{pro:indRB},  $\bar{T}:=\pr\circ T:V\lon\la_\Lie$ is a relative Rota-Baxter operator on the canonical Lie algebra $\la_\Lie$ with respect to the representation $(V;\gr)$. Thus, on objects, we define
  $$
 G(T):=\bar{T}.
  $$

  Let $(\phi,\varphi)$ be a   homomorphism of relative Rota-Baxter operators on Leibniz algebras   from  $ T$ to $ T' $. By Proposition \ref{pro:rbmtorbm}, $(\bar{\phi},\varphi)$ is a   homomorphism  of relative Rota-Baxter operators from  $ \bar{T} $ to $ \bar{T'} $. Thus, on  morphisms, we define
  $$
 G(\phi,\varphi):=(\bar{\phi},\varphi).
  $$
  Then it is straightforward to see that $ G$ is indeed a functor. Moreover, let $\g$ be a Lie algebra, we have $\Lei(\g)=0$. Thus, we obtain that $G\circ F=\Id_{\rbo}.$ Thus, we have the  identity natural transformation $\varepsilon$ (the counit of the adjunction)
$$
\varepsilon=\Id_{\Id_{\rbo}}:\Id_{\rbo}=G\circ F\lon \Id_{\rbo}.
$$
Moreover, for any relative Rota-Baxter operator $K:W\longrightarrow\h$ on a Lie algebra $\h$ with respect to a representation $(W;\rho)$, relative Rota-Baxter operator $T:V\longrightarrow\la$ on a Leibniz algebra $\la$ with respect to a symmetric representation $(V;\rho^L,\rho^R=-\rho^L)$ and relative Rota-Baxter operator homomorphism $(\chi,\xi)$ from $G(T)$ to $K$, we deduce that $\chi\circ \pr$ is a Leibniz algebra homomorphism from $\la$ to $\h$ and
\begin{eqnarray*}
&&K\circ \xi=(\chi\circ \pr)\circ T\\
&&\xi(\rho^L(x)u)\stackrel{\eqref{induce-rep}}{=}\xi(\gr(\bar{x})u)=\rho(\chi(\bar{x}))\xi(u)=\rho\big((\chi\circ \pr)(x)\big)\xi(u),~\forall x\in\la,u\in V.
\end{eqnarray*}
Thus, there is exactly one relative Rota-Baxter operator homomorphism $(\chi\circ\pr,\xi)$  from $T$ to $F(K)$ such that the following diagram commutes:
$$
\xymatrix{
   K=G F(K)\ar[rr]^{({\Id}_\h,{\Id}_W)} & & K \\
     & G(T)\ar[ul]^{G(\chi\circ \pr,\xi)}\ar[ur]_{(\chi,\xi)},&
    }
$$
which implies that $G$ is  a left adjoint for $F$. The proof is finished.
\end{proof}

\begin{rmk}  \label{rmk:almost_equivalence}
The functors $G:\rbols\to\rbo$ and $F:\rbo\to\rbols$ are induced by those establishing the equivalence of categories between modules over a Lie algebra $\g$ and symmetric Leibniz representations over $\g$, viewed as a Leibniz algebra. From this, one may deduce that the pair $(F,G)$ constitutes almost an adjoint equivalence. In fact, $F$ is fully faithful, because the counit of the adjunction $\epsilon:G\circ F\to\Id$ is a natural isomorphism for each object of $\rbo$. On the other hand, the unit of the adjunction $\eta:\Id\to F\circ G$ is not an isomorphism. For an object $T:V\to\la$ in $\rbols$, $(F\circ G)(T)$ corresponds to viewing $\pr\circ T:V\to\la_\Lie$ again as an object in $\rbols$. We have a natural relative Rota-Baxter operator homomorphism  $(\pr,\Id_V)$ from $T$ to $\pr\circ T$
$$
\xymatrix{ V \ar[d]^{\Id_V} \ar[r]^T & \la \ar[d]^\pr \\ V \ar[r]^{\pr\circ T} & \la_\Lie, }
$$
but it is not an isomorphism in general.

Note that one may add the data which arises from the passage from $\la_\Lie$ to $\la$ to the category $\rbo$ in order to make it an equivalence of categories. The data consists of the isomorphism class of an  antisymmetric representation of $\la_\Lie$ (corresponding to $\Lei(\la)$) and of the Loday-Pirashvili cohomology class of a $2$-cocycle characterizing the abelian extension
\begin{eqnarray}\label{characterizing-2-coho}
0\to \Lei(\la)\to \la\to \la_\Lie\to 0.
\end{eqnarray}
Given a relative Rota-Baxter operator $T:V\to\g$ together with a $\g$-module $A$ (viewed as an antisymmetric representation of the Leibniz algebra $\g$) and a $2$-cocycle $\alpha\in Z^2(\g,A)$, we obtain a relative Rota-Baxter operator  $T:V\to\la$ for the Leibniz algebra $\la$ constructed from $(\g,A,\alpha)$ with $A=\Lei(\la)$ and $V$ viewed as a symmetric representation of $\la$, because $\Lei(\la)$ acts trivially from the left and thus also trivially from the right as the representation $V$ is symmetric. This construction depends on the choice of the cocycle and the module $A$, but becomes natural when passing to cohomology and isomorphism classes.
\end{rmk}



\subsection{Relations between the categories $\ao$  and  $\rbola$}

First we construct a functor $\huaF$ from the category $\ao$ of relative averaging operators on Lie algebras to  the category $\rbola$ of relative Rota-Baxter operators on Leibniz algebras with respect to antisymmetric representations.

On  objects, the functor $\huaF$ is defined as follows. Let $T:V\longrightarrow\g$   be a relative averaging operator on a Lie algebra $(\g,[-,-]_\g)$   with respect to a representation $(V;\rho)$. We view the Lie algebra $(\g,[-,-]_\g)$ as a Leibniz algebra. Moreover the Lie algebra representation $\rho$ gives rise to an antisymmetric representation $(V;\rho,0)$ of the Leibniz algebra $(\g,[-,-]_\g)$. Then the equality \eqref{eq:aocon} means that  $T:V\longrightarrow\g$ is a relative Rota-Baxter operator on the Leibniz algebra $(\g,[-,-]_\g)$ with respect to the antisymmetric representation $(V;\rho,0)$.

On  morphisms, the functor $\huaF$ is defined as follows. Let $T:V\longrightarrow\g$ (resp. $T':V'\longrightarrow\g'$) be a relative averaging operator on a Lie algebra $(\g,[-,-]_\g)$ (resp. $(\g',\{-,-\}_{\g'})$) with respect to a representation $(V;\rho)$ (resp. $(V',\rho')$).  Let $(\phi,\varphi)$ be a   homomorphism  from the relative averaging operator $ T $ to $ T' $. Then it is obvious that $(\phi,\varphi)$ is also a homomorphism between the above induced relative Rota-Baxter operators.

It is straightforward to see that $\huaF$ defined above is indeed a functor.

In the sequel, we construct a functor $\huaG$ from the category $\rbola$ to the category $\ao$.

\begin{pro}\label{pro:indET}
  Let $T:V\lon\la$ be a relative Rota-Baxter operator on a Leibniz algebra $(\la,[-,-]_\la)$ with respect to an antisymmetric representation $(V;\rho^L,\rho^R=0)$. Then $\bar{T}:=\pr\circ T:V\lon\la_\Lie$ is a relative averaging operator on the Lie algebra $\la_\Lie$ with respect to the representation $(V;\gr)$.
\end{pro}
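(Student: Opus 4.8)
The plan is to follow the strategy of Proposition \ref{pro:indRB} almost verbatim, the key observation being that the antisymmetric condition $\rho^R=0$ causes the Leibniz Rota-Baxter identity to collapse exactly onto the relative averaging identity. Passing to the quotient $\la_\Lie$ is harmless because $\pr$ is bracket-preserving and, by the discussion preceding Proposition \ref{pro:indRB}, $\gr$ is built precisely so that $\rho^L$ factors through $\pr$.

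First I would specialize the defining equation \eqref{Rota-Baxter} of $T$ to the antisymmetric representation $(V;\rho^L,\rho^R=0)$, so that the second summand disappears and it reads
$$[Tu,Tv]_\la=T\big(\rho^L(Tu)v\big),\quad\forall u,v\in V.$$
Next I would apply the projection $\pr:\la\lon\la_\Lie$ to both sides. Using that $\pr$ preserves the bracket and the notation $\bar{T}=\pr\circ T$, the left-hand side becomes $[\bar{T}u,\bar{T}v]_{\la_\Lie}$, while the right-hand side becomes $\bar{T}\big(\rho^L(Tu)v\big)$.

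Finally, I would rewrite $\rho^L(Tu)$ as $\gr(\overline{Tu})=\gr(\bar{T}u)$ by means of the defining relation \eqref{induce-rep}, which yields
$$[\bar{T}u,\bar{T}v]_{\la_\Lie}=\bar{T}\big(\gr(\bar{T}u)v\big),\quad\forall u,v\in V.$$
This is exactly the relative averaging condition \eqref{eq:aocon} for $\bar{T}$ with respect to the representation $(V;\gr)$. I expect no genuine obstacle here: the only auxiliary facts needed are that $\bar{T}$ is well defined (guaranteed by Lemma \ref{canonical-Lie}) and that $(V;\gr)$ is a representation of $\la_\Lie$ (already established in the discussion preceding Proposition \ref{pro:indRB}). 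The entire conceptual content is that antisymmetry ($\rho^R=0$) deletes the second term of the Rota-Baxter identity, leaving precisely the single-term averaging identity after projection.
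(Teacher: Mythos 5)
Your proposal is correct and follows exactly the paper's own argument: specialize \eqref{Rota-Baxter} using $\rho^R=0$, apply the bracket-preserving projection $\pr$ to both sides, and invoke \eqref{induce-rep} to replace $\rho^L(Tu)$ by $\gr(\bar{T}u)$, landing precisely on the averaging identity \eqref{eq:aocon}. No gaps; this matches the paper's proof step for step.
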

\begin{proof}
  Applying the projection $\pr:\la\lon\la_\Lie$ to the both sides of \eqref{Rota-Baxter}, we obtain
  \begin{eqnarray*}
    [\pr\circ T(u),\pr\circ T(v)]_{\la_\Lie}=\pr\circ T(\rho^L(Tu)v)\stackrel{\eqref{induce-rep}}{=}\pr\circ T(\gr(\pr\circ T(u))v),
  \end{eqnarray*}
  which implies that $\pr\circ T:V\lon\la_\Lie$ is an averaging operator on the Lie algebra $\la_\Lie$ with respect to the representation $(V;\gr)$.
\end{proof}

Similar to Proposition \ref{pro:rbmtorbm}, we have the following result.

\begin{pro}\label{pro:rbmtoam}
  Let $T:V\longrightarrow\la$ (resp. $T':V'\longrightarrow\la'$) be a relative Rota-Baxter  operator on the Leibniz algebra $(\la,[-,-]_\la)$ (resp. $(\la',\{-,-\}_{\la'})$) with respect to the antisymmetric representation $(V;\rho^L,\rho^R=0)$ (resp. $(V;{\rho^L}',{\rho^R}'=0)$).  Let $(\phi,\varphi)$ be a   homomorphism of relative Rota-Baxter operators on Leibniz algebras   from  $ T$ to $ T' $. Then $(\bar{\phi},\varphi)$ is a   homomorphism  of relative averaging operators from  $ \bar{T} $ to $ \bar{T'} $, where the Lie algebra homomorphism $\bar{\phi}:\la_\Lie\lon\la'_\Lie$ is defined by
  $$
  \bar{\phi}(\bar{x}):=\overline{\phi(x)},\quad \forall x\in\la.
  $$
\end{pro}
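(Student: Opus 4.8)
The plan is to follow the proof of Proposition \ref{pro:rbmtorbm} essentially verbatim, simply replacing the relative Rota-Baxter target condition on the Lie side by the relative averaging condition. The key simplification coming from the antisymmetric hypothesis is that $\rho^R=0$, so the third compatibility \eqref{defi:isocon3rbl} is vacuous and only the left action $\rho^L$ (equivalently $\gr$, via \eqref{induce-rep}) enters; the averaging structure on $\bar T$ and $\bar{T'}$ itself is already supplied by Proposition \ref{pro:indET}, so what remains is purely the naturality of the construction $T\mapsto\bar T$ in the morphism $(\phi,\varphi)$.

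First I would establish that $\bar\phi$ is a well-defined Lie algebra homomorphism. Since $\phi:\la\to\la'$ is a Leibniz algebra homomorphism, $\phi([x,x]_\la)=[\phi(x),\phi(x)]_{\la'}$, whence $\phi(\Lei(\la))\subset\Lei(\la')$; therefore $\phi$ descends to $\bar\phi:\la_\Lie\to\la'_\Lie$, $\bar\phi(\bar x)=\overline{\phi(x)}$, and by construction the square $\pr'\circ\phi=\bar\phi\circ\pr$ commutes, where $\pr$ and $\pr'$ denote the projections $\la\to\la_\Lie$ and $\la'\to\la'_\Lie$ respectively.

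Next I would verify the two conditions \eqref{defi:isocon1} and \eqref{defi:isocon2} defining a homomorphism of relative averaging operators, applied to $\bar T=\pr\circ T$ and $\bar{T'}=\pr'\circ T'$. For the first, I compose \eqref{defi:isocon1rbl} with $\pr'$ and use the commuting square:
\[
\bar{T'}\circ\varphi=\pr'\circ T'\circ\varphi=\pr'\circ\phi\circ T=\bar\phi\circ\pr\circ T=\bar\phi\circ\bar T.
\]
For the second, recalling $\gr(\bar x)=\rho^L(x)$ and $\gr'(\bar\phi(\bar x))=\gr'(\overline{\phi(x)})={\rho^L}'(\phi(x))$ from \eqref{induce-rep}, the equivariance \eqref{defi:isocon2rbl} gives
\[
\varphi\,\gr(\bar x)(u)=\varphi\,\rho^L(x)(u)={\rho^L}'(\phi(x))(\varphi(u))=\gr'(\bar\phi(\bar x))(\varphi(u)),\quad\forall x\in\la,\,u\in V.
\]

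I do not expect any genuine obstacle here: the antisymmetric assumption strips away the right-action data, so no analogue of \eqref{defi:isocon3rbl} is needed on the Lie side, and both required identities reduce to composing the given Leibniz-level relations with the projections. The only point deserving care is the well-definedness of $\bar\phi$, which is identical to the argument in Proposition \ref{pro:rbmtorbm}; everything else is a direct diagram chase, and I would conclude that $(\bar\phi,\varphi)$ is the desired homomorphism of relative averaging operators from $\bar T$ to $\bar{T'}$.
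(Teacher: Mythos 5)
Your proposal is correct and matches the paper's approach: the paper proves this proposition by reference to Proposition \ref{pro:rbmtorbm}, whose proof consists exactly of noting $\phi(\Lei(\la))\subset\Lei(\la')$ so that $\bar\phi$ is a well-defined Lie algebra homomorphism, and then declaring the remaining verification straightforward. You have simply spelled out that "straightforward" part — the commuting square $\pr'\circ\phi=\bar\phi\circ\pr$ and the two equivariance computations via \eqref{induce-rep} — which is a faithful expansion of the same argument.
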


Now we are ready to give the main result in this subsection.
\begin{thm}\label{AO-adjoint}
  There is a functor $\huaG$ from    the category $\rbola$ of relative Rota-Baxter operators on Leibniz algebras with respect to antisymmetric representations to the category $\ao$ of relative averaging operators on Lie algebras, such that $\huaG$ is  a left adjoint
for  $\huaF$.
\end{thm}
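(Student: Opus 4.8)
The plan is to run the proof of Theorem \ref{adjoint} almost verbatim, substituting the antisymmetric analogues of the two key propositions: Proposition \ref{pro:indET} in place of Proposition \ref{pro:indRB} and Proposition \ref{pro:rbmtoam} in place of Proposition \ref{pro:rbmtorbm}. The point is that passing to the canonical Lie algebra sends an antisymmetric relative Rota-Baxter datum on a Leibniz algebra to a relative \emph{averaging} operator (rather than a relative Rota-Baxter operator), and this is precisely what the two antisymmetric propositions already record.

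First I would define $\huaG$ on objects by $\huaG(T):=\bar{T}=\pr\circ T$. Given $T:V\to\la$ with respect to an antisymmetric representation $(V;\rho^L,\rho^R=0)$, Proposition \ref{pro:indET} guarantees that $\bar{T}:V\to\la_\Lie$ is a relative averaging operator on $\la_\Lie$ with respect to $(V;\gr)$, so this lands in $\ao$. On morphisms I would set $\huaG(\phi,\varphi):=(\bar\phi,\varphi)$, which is well-defined and is a morphism in $\ao$ by Proposition \ref{pro:rbmtoam}. Functoriality is immediate: compatibility with identities and with composition follows from the functoriality of $\la\mapsto\la_\Lie$ and from $\overline{\phi'\circ\phi}=\bar{\phi'}\circ\bar\phi$, checked by a routine diagram chase.

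Next I would produce the adjunction through its counit. For a Lie algebra $\g$ one has $\Lei(\g)=0$, hence $\g_\Lie=\g$ and $\huaG\circ\huaF=\Id_{\ao}$; this supplies the identity natural transformation $\varepsilon=\Id_{\Id_{\ao}}$ as candidate counit. To check that each $\varepsilon_K=\Id_K$ is a universal arrow, I would take any relative averaging operator $K:W\to\h$ on a Lie algebra $\h$ with respect to $(W;\rho)$, any antisymmetric object $T:V\to\la$, and any morphism $(\chi,\xi):\huaG(T)\to K$ in $\ao$, and then exhibit the lift $(\chi\circ\pr,\xi):T\to\huaF(K)$ in $\rbola$. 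Here $\chi\circ\pr:\la\to\h$ is a Leibniz homomorphism as a composite of $\pr$ with the Lie homomorphism $\chi$; condition \eqref{defi:isocon1rbl} reads $K\circ\xi=\chi\circ\bar{T}=(\chi\circ\pr)\circ T$; and condition \eqref{defi:isocon2rbl} follows from \eqref{induce-rep} together with the fact that $(\chi,\xi)$ intertwines $\gr$ and $\rho$, since $\xi(\rho^L(x)u)=\xi(\gr(\bar{x})u)=\rho(\chi(\bar{x}))\xi(u)=\rho\big((\chi\circ\pr)(x)\big)\xi(u)$.

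I expect the only genuinely structural step to be uniqueness of the lift, though it is not hard. Any Leibniz homomorphism $\psi:\la\to\h$ occurring in a morphism $T\to\huaF(K)$ must send $\Lei(\la)$ into $\Lei(\h)=0$, so it factors uniquely through $\pr$ as $\psi=\bar\psi\circ\pr$; applying $\huaG$ then recovers $\bar\psi$ and forces $\huaG(\chi\circ\pr,\xi)=(\chi,\xi)$, which pins down the lift and gives the triangle identity. The bookkeeping obstacle I anticipate is confirming all three homomorphism conditions \eqref{defi:isocon1rbl}--\eqref{defi:isocon3rbl}; the third is automatic here because both $\rho^R=0$ for $T$ and the right action of $\huaF(K)$ vanish, so the antisymmetric case is if anything slightly cleaner than the symmetric one. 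Assembling these observations shows that $(\chi,\xi)\leftrightarrow(\chi\circ\pr,\xi)$ is a natural bijection, exhibiting $\huaG$ as a left adjoint for $\huaF$.
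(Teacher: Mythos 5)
Your proposal is correct and takes essentially the same route as the paper: the paper defines $\huaG$ on objects via Proposition \ref{pro:indET} and on morphisms via Proposition \ref{pro:rbmtoam}, then establishes the adjunction ``by the similar argument as Theorem \ref{adjoint}'' with details omitted. In fact your write-up supplies exactly those omitted details (the identity counit from $\Lei(\h)=0$, the lift $(\chi\circ\pr,\xi)$, its uniqueness via factorization through $\pr$, and the observation that condition \eqref{defi:isocon3rbl} is automatic since both right actions vanish), so it is a faithful and slightly more complete version of the paper's argument.
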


\begin{proof}
  Let $T:V\lon\la$ be a relative Rota-Baxter operator on a Leibniz algebra $(\la,[-,-]_\la)$ with respect to an antisymmetric representation $(V;\rho^L,\rho^R=0)$. By Proposition \ref{pro:indET},  $\bar{T}:=\pr\circ T:V\lon\la_\Lie$ is a relative averaging operator on the Lie algebra $\la_\Lie$ with respect to the representation $(V;\gr)$. Thus, on objects, we define
  $$
  \huaG(T):=\bar{T}.
  $$

  Let $(\phi,\varphi)$ be a   homomorphism of relative Rota-Baxter operators on Leibniz algebras   from  $T$ to $T'$. By Proposition \ref{pro:rbmtoam}, $(\bar{\phi},\varphi)$ is a   homomorphism  of relative averaging operators from  $ \bar{T} $ to $ \bar{T'} $. Thus, on  morphisms, we define
  $$
  \huaG(\phi,\varphi):=(\bar{\phi},\varphi).
  $$
  Then it is straightforward to see that $\huaG$ is indeed a functor.

By the similar argument as Theorem \ref{adjoint}, we can show  that $\huaG$ is  a left adjoint for $\huaF$. We omit details.
\end{proof}

\begin{rmk}
Like in Remark \ref{rmk:almost_equivalence}, the pair of functors $\huaF$ and $\huaG$ form almost an adjoint equivalence.
\end{rmk}

\section{Relations between the controlling algebras}\label{sec:control}

In this section, we establish the relation between the controlling algebra  of  relative Rota-Baxter operators on a Leibniz algebra with respect to a symmetric (resp. antisymmetric) representation and the controlling algebra of  the   relative Rota-Baxter (resp. averaging) operators on the canonical Lie algebra.


A permutation $\sigma\in\mathbb S_n$ is called an {\em $(i,n-i)$-shuffle} if $\sigma(1)<\cdots <\sigma(i)$ and $\sigma(i+1)<\cdots <\sigma(n)$. If $i=0$ or $n$, we assume $\sigma=\Id$. The set of all $(i,n-i)$-shuffles will be denoted by $\mathbb S_{(i,n-i)}$. The notion of an $(i_1,\cdots,i_k)$-shuffle and the set $\mathbb S_{(i_1,\cdots,i_k)}$ are defined analogously.

A degree $1$ element $\theta\in\g^1$ is called a Maurer-Cartan element  of a differential graded Lie algebra $( \oplus_{k\in\mathbb Z}\g^k,[\cdot,\cdot],d)$ if it
satisfies the  Maurer-Cartan  equation:
$
d \theta+\half[\theta,\theta]=0$. The set of Maurer-Cartan elements in a dgla $\g$ will be denoted by $\MC(\g)$.

Let $\g$ be a vector space. We consider the graded vector space $C^*(\g,\g)=\oplus_{n\ge 1}C^n(\g,\g)=\oplus_{n\ge 1}\Hom(\otimes^n\g,\g)$. An element $P\in C^{p+1}(\g,\g)$ is defined to have degree $p$. 
The {\bf Balavoine bracket} on the graded vector space $C^*(\g,\g)$ is given by:
\begin{eqnarray}\label{leibniz-bracket}
[P,Q]_\B=P\bar{\circ}Q-(-1)^{pq}Q\bar{\circ}P,\,\,\,\,\forall P\in C^{p+1}(\g,\g),Q\in C^{q+1}(\g,\g),
\end{eqnarray}
where $P\bar{\circ}Q\in C^{p+q+1}(\g,\g)$ is defined by
\begin{eqnarray}
P\bar{\circ}Q=\sum_{k=1}^{p+1}P\circ_k Q,
\end{eqnarray}
and $\circ_k$ is defined by
\begin{eqnarray*}
 \nonumber&&(P\circ_kQ)(x_1,\cdots,x_{p+q+1})\\
&=&\sum_{\sigma\in\mathbb S_{(k-1,q)}}(-1)^{(k-1)q}(-1)^{\sigma}P(x_{\sigma(1)},\cdots,x_{\sigma(k-1)},Q(x_{\sigma(k)},\cdots,x_{\sigma(k+q-1)},x_{k+q}),x_{k+q+1},\cdots,x_{p+q+1}).
\end{eqnarray*}

It is well known that

\begin{thm}{\rm (\cite{Balavoine-1,Fialowski})}\label{leibniz-algebra-B}
With the above notations, $(C^*(\g,\g),[-,-]_{\B})$ is a graded Lie algebra. Its Maurer-Cartan elements (as a differential graded Lie algebra with zero differential) are precisely the Leibniz algebra structures on $\g$.
\end{thm}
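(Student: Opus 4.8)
The plan is to prove the two assertions in turn: first that $[-,-]_\B$ makes $C^*(\g,\g)$ into a graded Lie algebra, and then that the Maurer--Cartan elements of the associated zero-differential dgla are exactly the Leibniz brackets on $\g$.

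For the graded Lie algebra structure, graded skew-symmetry is built into the definition: from $[P,Q]_\B=P\bar\circ Q-(-1)^{pq}Q\bar\circ P$ one reads off at once that $[P,Q]_\B=-(-1)^{pq}[Q,P]_\B$, so nothing needs to be checked there. The substance is the graded Jacobi identity. The route I would follow is the standard one for brackets of Gerstenhaber/Nijenhuis--Richardson type: show that $\bar\circ$ is a graded \emph{right pre-Lie} (right-symmetric) product, i.e. that its associator is graded symmetric in its last two arguments,
\begin{equation*}
(P\bar\circ Q)\bar\circ R-P\bar\circ(Q\bar\circ R)=(-1)^{qr}\big((P\bar\circ R)\bar\circ Q-P\bar\circ(R\bar\circ Q)\big),
\end{equation*}
for $P\in C^{p+1}(\g,\g)$, $Q\in C^{q+1}(\g,\g)$, $R\in C^{r+1}(\g,\g)$. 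Granting this, the general algebraic fact that the graded commutator of a graded right pre-Lie algebra satisfies the graded Jacobi identity finishes Part 1; I would state and cite this fact rather than reprove it.

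Establishing the pre-Lie identity is the real work and the main obstacle. The key step is to expand $(P\circ_k Q)\circ_l R$ using the definition of $\circ_l$ and to separate the resulting terms into two families according to where the entry $R$ lands: the \emph{inner} terms, in which $R$ is inserted into a slot that feeds into $Q$ (these reassemble into $P\bar\circ(Q\bar\circ R)$), and the \emph{outer} terms, in which $R$ is inserted into a slot of $P$ disjoint from the $Q$-block. The combinatorial content is that the $(k-1,q)$-shuffles appearing in $P\circ_k Q$ compose with the shuffles of the outer $\circ_l$ to produce, after reindexing, exactly the shuffles governing the iterated insertion of $Q$ and $R$ into $P$, and that the prefactors $(-1)^{(k-1)q}$ together with the Koszul signs $(-1)^\sigma$ are engineered so that the outer family is symmetric under interchanging $Q$ and $R$ up to the sign $(-1)^{qr}$. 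Thus the outer contributions cancel in the difference of associators on the two sides of the displayed identity, leaving only the inner terms, which match. This is precisely Balavoine's computation, so I would organize the shuffle bookkeeping carefully but defer the full sign verification to \cite{Balavoine-1}.

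For Part 2 the differential is zero, so a degree $1$ element $\theta$ lies in $C^2(\g,\g)=\Hom(\g^{\otimes 2},\g)$, i.e. $\theta$ is a bilinear bracket $[x,y]:=\theta(x,y)$, and the Maurer--Cartan equation reduces to $[\theta,\theta]_\B=0$. Since $p=q=1$ gives $[\theta,\theta]_\B=\theta\bar\circ\theta-(-1)^{1}\theta\bar\circ\theta=2\,\theta\bar\circ\theta$, it suffices to evaluate $\theta\bar\circ\theta=\theta\circ_1\theta+\theta\circ_2\theta$ on three inputs. A direct computation (the $\circ_1$ term gives the first summand; the $(1,1)$-shuffle sum in $\circ_2$, carrying its prefactor $(-1)^{(k-1)q}=-1$, gives the other two) yields
\begin{equation*}
(\theta\bar\circ\theta)(x_1,x_2,x_3)=\theta(\theta(x_1,x_2),x_3)-\theta(x_1,\theta(x_2,x_3))+\theta(x_2,\theta(x_1,x_3)).
\end{equation*}
Hence $\theta\bar\circ\theta=0$ is equivalent to
\begin{equation*}
[x_1,[x_2,x_3]]=[[x_1,x_2],x_3]+[x_2,[x_1,x_3]],
\end{equation*}
which is exactly the (left) Leibniz identity. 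This identifies $\MC(C^*(\g,\g))$ with the set of Leibniz algebra structures on $\g$ and completes the proof.
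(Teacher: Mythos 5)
Your proposal is correct and takes essentially the approach the paper implicitly endorses: the paper offers no proof of this theorem, citing Balavoine and Fialowski, and your route for the Jacobi identity (showing $\bar\circ$ is graded right pre-Lie and passing to the graded commutator) is precisely the method of those references, with the hard shuffle-and-sign verification deferred to the same source the paper cites. Your explicit Maurer--Cartan computation is accurate --- the degree count $\theta\in C^2(\g,\g)$, the identity $[\theta,\theta]_\B=2\,\theta\bar\circ\theta$, the prefactor $(-1)^{(k-1)q}=-1$ for $\circ_2$, and the two $(1,1)$-shuffle terms all check out, recovering exactly the left Leibniz identity as stated in the paper.
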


\subsection{The controlling algebra of relative Rota-Baxter operators on a Leibniz algebra}
Let $(V;\rho_V^L,\rho_V^R)$ be  a representation of a Leibniz algebra $(\la,[-,-]_\la)$. Then there is a Leibniz algebra structure on $\la\oplus V$  given by
\begin{eqnarray}\label{semi-direct}
[x+u,y+v]_{\ltimes}=[x,y]_\la+\rho_V^L(x)v+\rho_V^R(y)u, \quad \forall x,y\in\g,~ u,v\in V.
\end{eqnarray}
This Leibniz algebra is called the semidirect product of $\la$ and $(V;\rho_V^L,\rho_V^R)$, and denoted by $\la\ltimes_{\rho_V^L,\rho_V^R}V.$ We denote the above semidirect product Leibniz multiplication by $\mu.$ Consider the graded vector space
$$C^*(V,\la):=\oplus_{n\ge 1}C^n(V,\la)=\oplus_{n\geq 1}\Hom(\otimes^{n}V,\la),$$
where an element $g\in C^n(V,\la)$ is defined to be of degree $n$.

\begin{thm}\label{twilled-DGLA}{\rm (\cite{ST})}
With the above notations,  $(C^*(V,\la),\{-,-\}_V)$ is a   graded Lie algebra, where the graded Lie bracket $\{-,-\}_V:C^m(V,\la)\times C^n(V,\la)\lon C^{m+n}(V,\la)$ is given by the derived bracket as following:
\begin{eqnarray*}
\{g_1,g_2\}_V&=&(-1)^{m-1}[[\mu,g_1]_\B,g_2]_\B,\quad \forall g_1\in C^m(V,\la),~g_2\in C^n(V,\la).
\end{eqnarray*}
 More precisely, we have
{\footnotesize
\begin{eqnarray}
&&\nonumber\{g_1,g_2\}_V(v_1,v_2,\cdots,v_{m+n})\\
&=&\nonumber\sum_{k=1}^{m}\sum_{\sigma\in\mathbb S_{(k-1,n)}}(-1)^{(k-1)n+1}(-1)^{\sigma}g_1(v_{\sigma(1)},\cdots,v_{\sigma(k-1)},\rho_V^L(g_2(v_{\sigma(k)},\cdots,v_{\sigma(k+n-1)}))v_{k+n},v_{k+n+1},\cdots,v_{m+n})\\
&&\nonumber+\sum_{k=2}^{m+1}\sum_{\sigma\in\mathbb S_{(k-2,n,1)}\atop \sigma(k+n-2)=k+n-1}(-1)^{kn}(-1)^{\sigma}
g_1(v_{\sigma(1)},\cdots,v_{\sigma(k-2)},\rho_V^R(g_2(v_{\sigma(k-1)},\cdots,v_{\sigma(k+n-2)}))v_{\sigma(k+n-1)},v_{k+n},\cdots,v_{m+n})\\
&&\nonumber+\sum_{k=1}^{m}\sum_{\sigma\in\mathbb S_{(k-1,n-1)}}(-1)^{(k-1)n}(-1)^{\sigma}[g_2(v_{\sigma(k)},\cdots,v_{\sigma(k+n-2)},v_{k+n-1}),g_1(v_{\sigma(1)},\cdots,v_{\sigma(k-1)},v_{k+n},\cdots,v_{m+n})]_{\la}\\
&&\nonumber+\sum_{\sigma\in\mathbb S_{(m,n-1)}}(-1)^{mn+1}(-1)^{\sigma}[g_1(v_{\sigma(1)},\cdots,v_{\sigma(m)}),g_2(v_{\sigma(m+1)},\cdots,v_{\sigma(m+n-1)},v_{m+n})]_{\la}\\
&&\nonumber+\sum_{k=1}^{n}\sum_{\sigma\in\mathbb S_{(k-1,m)}}(-1)^{m(k+n-1)}(-1)^{\sigma}g_2(v_{\sigma(1)},\cdots,v_{\sigma(k-1)},\rho_V^L(g_1(v_{\sigma(k)},\cdots,v_{\sigma(k+m-1)}))v_{k+m},v_{k+m+1},\cdots,v_{m+n})\\
&&\nonumber+\sum_{k=1}^{n}\sum_{\sigma\in\mathbb S_{(k-1,m,1)}\atop\sigma(k+m-1)=k+m}(-1)^{m(k+n-1)+1}(-1)^{\sigma}
g_2(v_{\sigma(1)},\cdots,v_{\sigma(k-1)},\rho_V^R(g_1(v_{\sigma(k)},\cdots,v_{\sigma(k-1+m)}))v_{\sigma(k+m)},v_{k+m+1},\cdots,v_{m+n}).
\end{eqnarray}
}

Moreover, its Maurer-Cartan elements (where the differential is taken to be zero again) are relative Rota-Baxter operators on the Leibniz algebra $(\la,[-,-]_\la)$ with respect to the representation $(V;\rho_V^L,\rho_V^R)$.
\end{thm}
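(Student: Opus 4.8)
The plan is to realize the bracket $\{-,-\}_V$ as a \emph{derived bracket} of the Balavoine bracket on the larger cochain space of the semidirect product $\la\ltimes_{\rho_V^L,\rho_V^R}V$, and then to invoke the higher derived bracket formalism (in the spirit of Voronov's higher derived brackets) so that the graded Lie algebra axioms and the Maurer-Cartan characterization fall out together. Concretely, I would work inside the graded Lie algebra $\big(C^*(\la\oplus V,\la\oplus V),[-,-]_\B\big)$ provided by Theorem \ref{leibniz-algebra-B}, and exhibit $(C^*(V,\la),\{-,-\}_V)$ as the structure induced on a distinguished abelian subalgebra.

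First I would assemble the four pieces of a $V$-datum. (i) The ambient graded Lie algebra is $L=\big(C^*(\la\oplus V,\la\oplus V),[-,-]_\B\big)$. (ii) The subspace $\mathfrak{a}:=C^*(V,\la)=\oplus_{n}\Hom(\otimes^n V,\la)$ embeds in $L$ by viewing a map $\otimes^n V\to\la$ as a cochain on $\la\oplus V$ that vanishes as soon as one argument lies in $\la$; a bidegree bookkeeping (counting $V$-inputs against the type of the output) shows that composing two such cochains always feeds a $\la$-valued output into a $V$-slot, so $[\mathfrak{a},\mathfrak{a}]_\B=0$ and $\mathfrak{a}$ is abelian. (iii) Let $P$ be the projection onto $\mathfrak{a}$ along the complementary summand of cochains that either have a $\la$-input or a $V$-valued output. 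The same input/output bookkeeping shows that the bracket of two elements of $\ker P$ can never produce an all-$V$-input, $\la$-valued component, so $\ker P$ is a graded Lie subalgebra. (iv) The semidirect multiplication $\mu$ of \eqref{semi-direct} has no component $\otimes^2 V\to\la$, hence $\mu\in\ker P$; and since $\la\ltimes_{\rho_V^L,\rho_V^R}V$ is a Leibniz algebra, the Maurer-Cartan characterization of Theorem \ref{leibniz-algebra-B} gives $[\mu,\mu]_\B=0$.

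With the $V$-datum in place, the derived brackets $l_k(g_1,\dots,g_k):=P[\cdots[[\mu,g_1]_\B,g_2]_\B,\dots,g_k]_\B$ equip $\mathfrak{a}$ with an $L_\infty$-structure, and I would then check that it collapses to an ordinary graded Lie algebra. The bidegree analysis shows $P[\mu,g]_\B=0$, so $l_1=0$ (matching the zero differential); the element $[[\mu,g_1]_\B,g_2]_\B$ already lands in $\mathfrak{a}$, so $l_2(g_1,g_2)=(-1)^{m-1}[[\mu,g_1]_\B,g_2]_\B$ reproduces $\{-,-\}_V$; and since $[[\mu,g_1]_\B,g_2]_\B\in\mathfrak{a}$ while $\mathfrak{a}$ is abelian, every $l_k$ with $k\ge 3$ vanishes. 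Graded skew-symmetry and the graded Jacobi identity for $\{-,-\}_V$ are then precisely the surviving $L_\infty$-relations, giving the first assertion.

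For the second assertion I would specialize the explicit six-sum formula to $m=n=1$ and $g_1=g_2=T\in C^1(V,\la)=\Hom(V,\la)$. As the differential is zero, $T$ is a Maurer-Cartan element if and only if $\{T,T\}_V=0$, and unwinding $\frac{1}{2}\{T,T\}_V(u,v)$ collects exactly the terms $[Tu,Tv]_\la$ and $-T(\rho_V^L(Tu)v+\rho_V^R(Tv)u)$, so the Maurer-Cartan equation is the relative Rota-Baxter identity \eqref{Rota-Baxter}. The main obstacle is step (iii): making the abelian subalgebra and the kernel of the projection precise requires a careful bigrading of $C^*(\la\oplus V,\la\oplus V)$ together with a clean argument that composition respects it. Once this bookkeeping is in place, the graded Jacobi identity is supplied by the derived-bracket machinery rather than by a direct and very lengthy manipulation of the six-sum formula.
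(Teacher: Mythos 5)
Your proposal is correct, and it is essentially the proof of this result: the theorem is recalled here from \cite{ST} without proof, and the argument in \cite{ST} is exactly the Voronov-style derived bracket construction you describe — embedding $C^*(V,\la)$ as an abelian subalgebra of $\big(C^*(\la\oplus V,\la\oplus V),[-,-]_\B\big)$, checking that the complement of the projection $P$ is a subalgebra containing the Maurer-Cartan element $\mu$, and observing that $[[\mu,g_1]_\B,g_2]_\B$ already lies in the abelian subalgebra so that all higher derived brackets vanish. Your bidegree bookkeeping and the $m=n=1$ specialization recovering \eqref{Rota-Baxter} are both sound, so the proposal faithfully reproduces the intended argument.
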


In fact, this graded Lie algebra construction is functorial. More precisely, we consider the category $\la$-$\rep$ of   representations of the Leibniz algebra $(\la,[-,-]_\la)$. Then, we have a contravariant functor $\RB_\la$ from the category $\la$-$\rep$ to the category $\GLA$ of graded Lie algebras.

Let $(W;\rho_W^L,\rho_W^R)$ and $(V;\rho_V^L,\rho_V^R)$ be two representations of the Leibniz algebra $(\la,[-,-]_\la)$.   A {\bf homomorphism} from  $(W;\rho_W^L,\rho_W^R)$ to $(V;\rho_V^L,\rho_V^R)$ is a linear map $\phi:W\lon V$ such that
         \begin{eqnarray}
          \phi(\rho_W^L(x)w)&=&\rho_V^L(x)\phi(w),\label{defi:rep-homo1}\\
                \phi(\rho_W^R(x)w)&=&\rho_V^R(x)\phi(w),\quad\forall x\in\la, w\in W.\label{defi:rep-homo2}
      \end{eqnarray}

Let $\phi:W\lon V$ be a homomorphism from the representation $(W;\rho_W^L,\rho_W^R)$ to $(V;\rho_V^L,\rho_V^R)$. Define a linear map $\Phi:\Hom(\otimes^nV,\la)\lon\Hom(\otimes^{n}W,\la), n\geq 1,$   by
\begin{eqnarray}\label{eq:phi}
\Phi(f):=f\circ \phi^{\otimes n},\,\,\,\,\forall
f\in\Hom(\otimes^nV,\la).
\label{eq:defiphi}
\end{eqnarray}

\begin{pro}\label{morphism}
With the above notations, $\Phi$ is a homomorphism from the graded Lie algebra $(C^*(V,\la),\{-,-\}_V)$ to $(C^*(W,\la),\{-,-\}_W)$.
\end{pro}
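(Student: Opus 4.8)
The plan is to verify the two defining properties of a graded Lie algebra homomorphism. Since $\Phi$ sends $C^n(V,\la)$ to $C^n(W,\la)$ and is manifestly linear, it preserves the grading, so the only content is the bracket compatibility
$$\Phi(\{g_1,g_2\}_V)=\{\Phi(g_1),\Phi(g_2)\}_W,\qquad g_1\in C^m(V,\la),\ g_2\in C^n(V,\la).$$
Conceptually this is to be expected: the conditions \eqref{defi:rep-homo1}--\eqref{defi:rep-homo2} say precisely that $\Id_\la\oplus\phi:\la\ltimes_{\rho_W^L,\rho_W^R}W\to\la\ltimes_{\rho_V^L,\rho_V^R}V$ is a homomorphism of the semidirect product Leibniz algebras, intertwining $\mu_W$ with $\mu_V$; and under the identification of Theorem \ref{twilled-DGLA} the map $\Phi$ amounts to precomposing cochains with $\phi^{\otimes n}$, i.e. to pulling back along this homomorphism, so one expects $\Phi$ to intertwine the derived brackets built from $\mu_W$ and $\mu_V$. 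I would nonetheless carry out the verification directly from the explicit formula.

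First I would fix $g_1\in C^m(V,\la)$ and $g_2\in C^n(V,\la)$ and evaluate $\Phi(\{g_1,g_2\}_V)$ on arguments $w_1,\dots,w_{m+n}\in W$, which by \eqref{eq:phi} equals $\{g_1,g_2\}_V(\phi(w_1),\dots,\phi(w_{m+n}))$. I would then read off the six summands of the formula in Theorem \ref{twilled-DGLA} with each $v_i$ replaced by $\phi(w_i)$. The two families containing $\rho_V^L$ and the two containing $\rho_V^R$ are treated by the same device: such a term carries an inner slot of the form $\rho_V^L(g_2(\phi(w_{\sigma(k)}),\dots))\,\phi(w_{k+n})$, and using first $g_2(\phi(\cdot),\dots,\phi(\cdot))=\Phi(g_2)(\cdot,\dots,\cdot)$ and then the homomorphism condition \eqref{defi:rep-homo1} in the form $\rho_V^L(x)\phi(w)=\phi(\rho_W^L(x)w)$, this inner slot becomes $\phi\big(\rho_W^L(\Phi(g_2)(\dots))\,w_{k+n}\big)$. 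Now every argument fed to $g_1$ is of the form $\phi(\text{--})$, so by \eqref{eq:phi} the composite $g_1(\phi(\cdot),\dots,\phi(\cdot))$ collapses to $\Phi(g_1)(\cdot,\dots,\cdot)$. The $\rho_V^R$-families are handled identically using \eqref{defi:rep-homo2}, and for the two families built from $[-,-]_\la$ the outputs $g_i(\phi(\cdot),\dots)$ already lie in $\la$, so after rewriting them as $\Phi(g_i)(\cdot,\dots)$ no further use of $\phi$ is required.

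Once every occurrence of $\phi$ has been absorbed, each of the six summands becomes verbatim the corresponding summand in the expansion of $\{\Phi(g_1),\Phi(g_2)\}_W(w_1,\dots,w_{m+n})$: the shuffle sets $\mathbb S_{(k-1,n)}$, $\mathbb S_{(k-2,n,1)}$, and so on, together with the accompanying signs $(-1)^{(k-1)n}$, $(-1)^\sigma,\dots$, depend only on the degrees $m,n$ and not on the representation, while $\phi^{\otimes(m+n)}$ merely relabels inputs and hence commutes with all the shuffle permutations, so no sign subtleties are introduced. Summing the six resulting identities yields the claimed bracket compatibility. The step needing the most care is the bookkeeping for the two $\rho^R$-families, whose constrained shuffles (with conditions such as $\sigma(k+n-2)=k+n-1$) place the acted-upon argument in a permuted slot $v_{\sigma(k+n-1)}$; one must check that the identity \eqref{defi:rep-homo2} is applied to exactly this factor before $\phi$ is pulled out. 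This is the only place where a misread of the index pattern could occur; everything else is routine relabeling.
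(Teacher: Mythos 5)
Your proof is correct and takes essentially the same approach as the paper: the paper's own proof consists of the single remark that the claim ``follows directly from \eqref{defi:rep-homo1} and \eqref{defi:rep-homo2}'' with details omitted, and your term-by-term verification (absorbing $\phi$ into each of the six summands via the two intertwining conditions, with the shuffles and signs unaffected) is precisely the omitted computation.
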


\begin{proof}
It follows directly from \eqref{defi:rep-homo1} and \eqref{defi:rep-homo2}.  We omit the details.
\end{proof}

Moreover, we have the following theorem.

\begin{thm}\label{functor}  Theorem \ref{twilled-DGLA} and Proposition
  \ref{morphism} give us a contravariant functor $\RB_\la$ from the category $\la$-$\rep$ to the category $\GLA$ of graded Lie algebras,
\begin{itemize}
  \item on   objects, the functor $\RB_\la:$ $\la$-$\rep$ $\to$ $\GLA$  is defined  by
\begin{eqnarray}
\RB_\la\Big((V;\rho_V^L,\rho_V^R)\Big)&=&(C^*(V,\la),\{-,-\}_V),
\end{eqnarray}
 \item on  morphisms, the functor $\RB_\la:$ $\la$-$\rep$ $\to$ $\GLA$ is defined by
 \begin{eqnarray}
\RB_\la(W\stackrel{\phi}{\lon}V)&=&(C^*(V,\la),\{-,-\}_V)\stackrel{\Phi}{\lon}(C^*(W,\la),\{-,-\}_W),
\end{eqnarray}
 \end{itemize}
 where  $(W;\rho_W^L,\rho_W^R)$ and $(V;\rho_V^L,\rho_V^R)$ are   representations of the Leibniz algebra $(\la,[-,-]_\la)$  and $\phi\in\Hom_{\la\mbox{-}\rep}(W,V)$.
\end{thm}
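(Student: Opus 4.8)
The plan is to build the functor axioms out of the two results already in hand, so that the only genuine content to verify is strict functoriality (preservation of identities and of composition) together with the correct reversal of arrows. First I would record that Theorem \ref{twilled-DGLA} makes the assignment well defined on objects: for every representation $(V;\rho_V^L,\rho_V^R)$ of $\la$, the pair $(C^*(V,\la),\{-,-\}_V)$ is an object of $\GLA$. Next, Proposition \ref{morphism} makes it well defined on morphisms: a representation homomorphism $\phi:W\lon V$ produces the degree-preserving map $\Phi$ of \eqref{eq:phi}, which is a homomorphism of graded Lie algebras from $(C^*(V,\la),\{-,-\}_V)$ to $(C^*(W,\la),\{-,-\}_W)$. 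The arrow reversal originates here, and I would make it explicit: since $\phi^{\otimes n}:\otimes^n W\lon\otimes^n V$, precomposition sends $f\in\Hom(\otimes^n V,\la)$ to $f\circ\phi^{\otimes n}\in\Hom(\otimes^n W,\la)=C^n(W,\la)$, so $\Phi$ runs from $C^*(V,\la)$ to $C^*(W,\la)$ and preserves degree.

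It then remains to check the two functor identities, and for this I would argue directly from the formula $\Phi(f)=f\circ\phi^{\otimes n}$. For the identity morphism $\Id_V$ one has $\Id_V^{\otimes n}=\Id_{\otimes^n V}$, so the induced map sends each $f$ to $f\circ\Id_{\otimes^n V}=f$; hence $\RB_\la(\Id_V)=\Id_{C^*(V,\la)}$. For composition, take a second representation homomorphism $\psi:U\lon W$, write $\Phi,\Psi$ for the maps induced by $\phi,\psi$, and use the elementary identity $(\phi\circ\psi)^{\otimes n}=\phi^{\otimes n}\circ\psi^{\otimes n}$ on tensor powers. Then for every $f\in C^n(V,\la)$,
\begin{eqnarray*}
\RB_\la(\phi\circ\psi)(f)=f\circ(\phi\circ\psi)^{\otimes n}=f\circ\phi^{\otimes n}\circ\psi^{\otimes n}=\Psi\big(\Phi(f)\big)=\big(\Psi\circ\Phi\big)(f),
\end{eqnarray*}
which is exactly the contravariant law $\RB_\la(\phi\circ\psi)=\RB_\la(\psi)\circ\RB_\la(\phi)$.

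Combining these facts, namely well-definedness on objects, well-definedness on morphisms, preservation of identities, and contravariant preservation of composition, shows that $\RB_\la$ is a contravariant functor from $\la$-$\rep$ to $\GLA$. I expect no substantial obstacle: the structural work, that $\{-,-\}_V$ is a graded Lie bracket and that each $\Phi$ respects it, has already been discharged in Theorem \ref{twilled-DGLA} and Proposition \ref{morphism}. The only point requiring care is consistent bookkeeping of the direction reversal, confirming that the convention \eqref{eq:phi} sends a morphism $W\lon V$ of representations to a $\GLA$-morphism $C^*(V,\la)\lon C^*(W,\la)$ and that composition reverses accordingly, which the displayed computation settles.
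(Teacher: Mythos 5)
Your proposal is correct and takes essentially the same route as the paper: both rely on Theorem \ref{twilled-DGLA} and Proposition \ref{morphism} for well-definedness on objects and morphisms, and then verify identity preservation and the contravariant composition law directly from $\Phi(f)=f\circ\phi^{\otimes n}$ together with $(\phi\circ\psi)^{\otimes n}=\phi^{\otimes n}\circ\psi^{\otimes n}$. The only differences are cosmetic (labeling of the composed morphisms and your added explicit check of the arrow-reversal bookkeeping, which the paper leaves implicit).
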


\begin{proof}
Let $(V;\rho_V^L,\rho_V^R)$ be a representation of the Leibniz algebra $(\la,[-,-]_\la)$. For the identity homomorphism $\Id_V:V\lon V$, we have $\RB_\la(\Id_V)=\Id_{C^*(V,\la)}$. Moreover, let $\phi:W\lon V$ and $\psi:V\lon U$ be two homomorphisms of representations of the Leibniz algebra $(\la,[-,-]_\la)$. For $n\geq 1$ and $\theta\in\Hom(U^{\otimes n},\la)$, we have
\begin{eqnarray*}
\RB_\la(\psi\circ\phi)\theta&\stackrel{\eqref{eq:defiphi}}{=}&\theta\circ (\psi\circ\phi)^{\otimes n}\\
&=&(\theta\circ \psi^{\otimes n})\circ \phi^{\otimes n}\\
&=&(\RB_\la(\phi)\circ\RB_\la(\psi))\theta.
\end{eqnarray*}
Thus, we deduce that $\RB_\la$ is a contravariant functor.
\end{proof}

As a consequence, the short exact sequence (\ref{short_exact_sequence})
$$0\to V_{\rm anti}\stackrel{\phi}{\to} V\stackrel{\psi}{\to} V_{\rm sym}\to 0$$
associated to the $\la$-representation $(V;\rho_V^L,\rho_V^R)$ induces homomorphisms of graded Lie algebras
$$(C^*(V_{\rm sym},\la),\{-,-\}_{V_{\rm sym}})\stackrel{\Psi}{\to} (C^*(V,\la),\{-,-\}_V)\stackrel{\Phi}{\to}(C^*(V_{\rm anti},\la),\{-,-\}_{V_{\rm anti}}),$$
which do not necessarily form an exact sequence of graded Lie algebras. By passage to the corresponding Maurer-Cartan sets, we obtain a sequence of induced maps
\begin{equation}   \label{Maurer-Cartan_sequence}
\MC\big(C^*(V_{\rm sym},\la)\big)\stackrel{\Psi}{\to} \MC\big(C^*(V,\la)\big)\stackrel{\Phi}{\to}\MC\big(C^*(V_{\rm anti},\la)\big),
\end{equation}
which describe (loosely) how a relative Rota-Baxter operator on the Leibniz algebra $(\la,[-,-]_\la)$ with respect to $(V;\rho_V^L,\rho_V^R)$ is assembled from a relative Rota-Baxter operator on the Lie algebra $\la_\Lie$ and a relative averaging operator on $\la_\Lie$, according to Theorems \ref{adjoint} and \ref{AO-adjoint}, see also Corollaries \ref{induce-gla-rota-baxter} and \ref{induce-gla-anti}.


The (not necessarily exact) sequence (\ref{Maurer-Cartan_sequence}) is the effect in the {\it module-variable} of the standard short exact sequence. In order to investigate also the effect in the {\it Leibniz-algebra-variable} of the standard short exact sequence \eqref{characteristic-element} (which will be the subject of Theorem \ref{thm:carb}), we need some more preparation.

\subsection{The controlling algebra of relative Rota-Baxter operators on a Lie algebra}

Associated to any Leibniz algebra $(\la,[-,-]_\la)$,   we have an exact sequence of Leibniz algebras
\begin{eqnarray}\label{characteristic-element}
0\longrightarrow \Lei(\la)\stackrel{\frki}{\longrightarrow}\la\stackrel{ \pr}{\longrightarrow} \la_\Lie\longrightarrow 0.
\end{eqnarray}
It is an abelian extension of $\la_\Lie$ by $\Lei(\la)$. Moreover the induced  representation of $\la_\Lie$ on $\Lei(\la)$ is an antisymmetric representation. Observe that the sequence (\ref{characteristic-element}) is the special case of the sequence (\ref{short_exact_sequence}) for the adjoint representation of $\la$.

Let $(V;\rho_V^L,\rho_V^R)$ be a  representation of a Leibniz algebra $(\la,[-,-]_\la)$. Thus, there is exactly one Lie algebra homomorphism $\gr_V:\la_\Lie\longrightarrow \gl(V)$, such that following diagram  of Leibniz algebra homomorphisms commutes:
\begin{eqnarray}
\xymatrix{
   \Lei(\la)\ar[rr]^{\frki}\ar[drr]_{\rho_V^L\circ\frki}  & & \la\ar[d]^{\rho_V^L} \ar[rr]^{\pr}& &\la_\Lie\ar[dll]^{\gr_V}\\
                               &&\gl(V).
    }
\end{eqnarray}

Consider the category $\la$-$\SRep$ of   symmetric representations of the Leibniz algebra $(\la,[-,-]_\la)$, which is a subcategory of $\la$-$\rep$. Then we have the following functors
\begin{eqnarray*}
&&\huaI_s:\la\mbox{-}\SRep\lon  \Lei(\la)\mbox{-}\SRep\\
&&\huaP_s:\la\mbox{-}\SRep\lon  \la_\Lie\mbox{-}\SRep
\end{eqnarray*}
which are given by
\begin{itemize}
  \item the functor $\huaI_s:$ $\la\mbox{-}\SRep$ $\to$ $\Lei(\la)\mbox{-}\SRep$, which is defined  on   objects and on   morphisms respectively by
\begin{eqnarray}
\huaI_s\Big((V;\rho_V^L,-\rho_V^L)\Big)&=&(V;\rho_V^L\circ\frki,-\rho_V^L\circ\frki),\\
\huaI_s(W\stackrel{\phi}{\lon}V)&=&W\stackrel{\phi}{\lon}V,
\end{eqnarray}
 \item the functor $\huaP_s:$ $\la\mbox{-}\SRep$ $\to$ $\la_\Lie\mbox{-}\SRep$, which is defined  on   objects and on   morphisms respectively by
\begin{eqnarray}
\huaP_s\Big((V;\rho_V^L,-\rho_V^L)\Big)&=&(V;\gr_V,-\gr_V),\\
\huaP_s(W\stackrel{\phi}{\lon}V)&=&W\stackrel{\phi}{\lon}V,
\end{eqnarray}
 \end{itemize}
for symmetric representations $(W;\rho_W^L,-\rho_W^L)$ and $(V;\rho_V^L,-\rho_V^L)$  of the Leibniz algebra $(\la,[-,-]_\la)$  and $\phi\in\Hom_{\la\mbox{-}\SRep}(W,V)$.

Thus, we have three functors $\RB_{\Lei(\la)}\circ\huaI_s,~\RB_\la$ and $\RB_{\la_\Lie}\circ\huaP_s$ from $\la\mbox{-}\SRep$ to $\GLA$, where the functor $\RB_\la$ is given in Theorem \ref{functor}. Moreover, for any symmetric representation $(V;\rho_V^L,-\rho_V^L)$ of the Leibniz algebra $(\la,[-,-]_\la)$, we define $$\alpha_V:(\RB_{\Lei(\la)}\circ\huaI_s)(V;\rho_V^L,-\rho_V^L)\lon\RB_\la(V;\rho_V^L,-\rho_V^L)$$ and $$\beta_V:\RB_\la(V;\rho_V^L,-\rho_V^L)\lon(\RB_{\la_\Lie}\circ\huaP_s)(V;\rho_V^L,-\rho_V^L)$$ as following:
\begin{eqnarray}
\label{natural-transformation-1}\alpha_V(g)&=&g,\quad\quad\quad\forall g\in\Hom(\otimes^nV,\Lei(\la)),\\
\label{natural-transformation-2}\beta_V(f)&=&\pr\circ f,\quad\forall f\in\Hom(\otimes^nV,\la).
\end{eqnarray}

\begin{thm}\label{thm:carb}
With the above notations, $\alpha$ is a natural transformation  from the functor $\RB_{\Lei(\la)}\circ\huaI_s$ to $\RB_\la$, and $\beta$ is a natural transformation  from the functor $\RB_\la$ to $\RB_{\la_\Lie}\circ\huaP_s$. Moreover, for any symmetric representation $(V;\rho_V^L,-\rho_V^L)$ of the Leibniz algebra $(\la,[-,-]_\la)$, we have the following short exact sequence of graded Lie algebras:
\begin{eqnarray}\label{ES-Lie}
\qquad0\lon(\RB_{\Lei(\la)}\circ\huaI_s)(V;\rho_V^L,-\rho_V^L)\stackrel{\alpha_V}{\lon}\RB_\la(V;\rho_V^L,-\rho_V^L)\stackrel{\beta_V}{\lon} (\RB_{\la_\Lie}\circ\huaP_s)(V;\rho_V^L,-\rho_V^L)\lon 0.
\end{eqnarray}
\end{thm}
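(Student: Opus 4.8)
The plan is to verify the three assertions in turn—that $\alpha$ and $\beta$ are natural transformations of the stated functors, and that for each symmetric representation the sequence \eqref{ES-Lie} is short exact—starting from a concrete description of the three graded Lie algebras involved. The crucial preliminary observation is that the source graded Lie algebra $(\RB_{\Lei(\la)}\circ\huaI_s)(V;\rho_V^L,-\rho_V^L)$ is \emph{abelian}. Indeed, by \eqref{lem:imp} we have $\rho_V^L(\Lei(\la))=0$, so $\huaI_s(V)$ is the trivial $\Lei(\la)$-representation; and since $\Lei(\la)$ lies in the left center of $\la$ we get $[w,w']_\la=0$ for $w,w'\in\Lei(\la)$, so $\Lei(\la)$ is an abelian Leibniz algebra. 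Consequently every term in the explicit derived-bracket formula of Theorem \ref{twilled-DGLA} for $C^*(V,\Lei(\la))$ vanishes, and the bracket on the source is identically zero.

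Given this, the homomorphism property of $\alpha_V$ is nearly immediate. Since $\alpha_V$ is postcomposition with $\frki\colon\Lei(\la)\hookrightarrow\la$ and the source bracket is zero, showing $\alpha_V$ is a graded Lie algebra homomorphism reduces to showing that the image bracket $\{\frki\circ g_1,\frki\circ g_2\}_{V}$ vanishes in $C^*(V,\la)$. First I would inspect the six families of terms in Theorem \ref{twilled-DGLA}: each contains either a factor $\rho_V^L(g_i(\cdots))$ or $\rho_V^R(g_i(\cdots))=-\rho_V^L(g_i(\cdots))$ with $g_i$ valued in $\Lei(\la)$, hence zero by \eqref{lem:imp}, or a bracket $[g_i(\cdots),g_j(\cdots)]_\la$ of two elements of $\Lei(\la)$, hence zero by abelianness. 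Thus the whole bracket vanishes, and $\alpha_V$ is an (injective, since $\frki$ is) homomorphism.

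For $\beta_V=\pr\circ(-)$ I would show that $\pr$ converts $\{g_1,g_2\}_{V}$ into the $\la_\Lie$-derived bracket of $\pr\circ g_1$ and $\pr\circ g_2$ for the symmetric representation $(V;\gr_V,-\gr_V)=\huaP_s(V)$. The two facts that make each of the six families of terms transform correctly are that $\pr$ is a Leibniz algebra homomorphism, so $\pr[g_i(\cdots),g_j(\cdots)]_\la=[\pr\circ g_i(\cdots),\pr\circ g_j(\cdots)]_{\la_\Lie}$, and that the representation factors through $\pr$, namely $\rho_V^L=\gr_V\circ\pr$ and $\rho_V^R=-\gr_V\circ\pr$, so that an inner argument $\rho_V^L(g_j(\cdots))v$ equals $\gr_V(\pr\circ g_j(\cdots))v$. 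Conceptually this is the statement that $\pr\oplus\Id_V\colon\la\ltimes_{\rho_V^L,\rho_V^R}V\to\la_\Lie\ltimes_{\gr_V,-\gr_V}V$ is a homomorphism of semidirect-product Leibniz algebras intertwining the two multiplications $\mu$ of Theorem \ref{twilled-DGLA}; a term-by-term match then gives $\beta_V(\{g_1,g_2\}_V)=\{\beta_V(g_1),\beta_V(g_2)\}$. Naturality of both $\alpha$ and $\beta$ is afterwards formal: on morphisms the contravariant functors act by precomposition with $\phi^{\otimes n}$, while $\alpha_V,\beta_V$ act by postcomposition with $\frki$ resp. $\pr$, and these commute by associativity of composition.

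Finally, for exactness of \eqref{ES-Lie} I would argue degreewise. In each degree $n$ the sequence is obtained by applying $\Hom(\otimes^n V,-)$ to the short exact sequence of vector spaces $0\to\Lei(\la)\stackrel{\frki}{\to}\la\stackrel{\pr}{\to}\la_\Lie\to0$ from \eqref{characteristic-element}. Since we work over a field this sequence splits and $\Hom(\otimes^n V,-)$ preserves exactness, yielding injectivity of $\alpha_V$, surjectivity of $\beta_V$ (lift any $h\colon\otimes^n V\to\la_\Lie$ along the surjection $\pr$), and $\Ker\beta_V=\{f:\Img f\subseteq\Lei(\la)\}=\Img\alpha_V$. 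I expect the only genuinely laborious step to be the term-by-term check that $\beta_V$ respects the derived bracket; everything else is formal once the abelianness of the source and the factorization $\rho_V^L=\gr_V\circ\pr$ are in hand.
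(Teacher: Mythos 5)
Your proposal is correct and follows essentially the same route as the paper's proof: $\alpha_V$ and $\beta_V$ are treated as postcomposition with $\frki$ and $\pr$ respectively, the homomorphism property of $\beta_V$ comes from $\pr$ being a Leibniz algebra map together with $\rho_V^L=\gr_V\circ\pr$, naturality is the formal commutation of pre- and postcomposition, and exactness of \eqref{ES-Lie} is obtained degreewise by applying the exact functor $\Hom(\otimes^n V,-)$ to the sequence $0\to\Lei(\la)\to\la\to\la_\Lie\to 0$. One point in your favor: your observation that the source algebra is abelian, and that the derived bracket of two $\Lei(\la)$-valued cochains inside $C^*(V,\la)$ vanishes term by term, is precisely what identifies the intrinsic bracket of $(\RB_{\Lei(\la)}\circ\huaI_s)(V;\rho_V^L,-\rho_V^L)$ with the induced subalgebra bracket — a verification the paper's proof leaves implicit when it only remarks that $C^*(V,\Lei(\la))$ is a subalgebra because $\Lei(\la)$ is an ideal.
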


\begin{proof}
Since $\Lei(\la)$ is an ideal of the Leibniz algebra $(\la,[-,-]_\la)$, it follows  that $C^*(V,\Lei(\la))$ is a subalgebra of $\RB_\la(V;\rho_V^L,-\rho_V^L)$. Thus the linear embedding map $\alpha_V$ is a graded Lie algebra homomorphism. Let $\phi:W\lon V$ be a homomorphism of symmetric representations of $(\la,[-,-]_\la)$.  It is straightforward to obtain  the following commutative diagram:
\begin{eqnarray*}
\xymatrix{
 (\RB_{\Lei(\la)}\circ\huaI_s)(V;\rho_V^L,-\rho_V^L) \ar[d]_{ (\RB_{\Lei(\la)}\circ\huaI_s)(\phi)}\ar[rr]^{\,\,\,\,\,\,\,\,\,\,\,\,\,\,\,\,\alpha_V}
                && \RB_\la(V;\rho_V^L,-\rho_V^L)  \ar[d]^{\RB_\la(\phi)}  \\
(\RB_{\Lei(\la)}\circ\huaI_s)(W;\rho_W^L,-\rho_W^L) \ar[rr]^{\,\,\,\,\,\,\,\,\,\,\,\,\,\,\,\,\,\,\alpha_W}
                && \RB_\la(W;\rho_W^L,-\rho_W^L),}
\end{eqnarray*}
which implies that $\alpha$ is a natural transformation.

Since $\pr$ is a Leibniz algebra homomorphism and the definition \eqref{induce-rep} of $\gr_V$,
\emptycomment{
{\footnotesize
\begin{eqnarray}
&&\nonumber\beta_V(\{g_1,g_2\}_V)(v_1,v_2,\cdots,v_{m+n})\\
&\stackrel{\eqref{natural-transformation-2}}{=}&\nonumber\sum_{k=1}^{m}\sum_{\sigma\in\mathbb S_{(k-1,n)}}(-1)^{(k-1)n+1}(-1)^{\sigma}\pr g_1(v_{\sigma(1)},\cdots,v_{\sigma(k-1)},\rho_V^L(g_2(v_{\sigma(k)},\cdots,v_{\sigma(k+n-1)}))v_{k+n},v_{k+n+1},\cdots,v_{m+n})\\
&&\nonumber-\sum_{k=2}^{m+1}\sum_{\sigma\in\mathbb S_{(k-2,n,1)}\atop \sigma(k+n-2)=k+n-1}(-1)^{kn}(-1)^{\sigma}
\pr g_1(v_{\sigma(1)},\cdots,v_{\sigma(k-2)},\rho_V^L(g_2(v_{\sigma(k-1)},\cdots,v_{\sigma(k+n-2)}))v_{\sigma(k+n-1)},v_{k+n},\cdots,v_{m+n})\\
&&\nonumber+\sum_{k=1}^{m}\sum_{\sigma\in\mathbb S_{(k-1,n-1)}}(-1)^{(k-1)n}(-1)^{\sigma}[\pr g_2(v_{\sigma(k)},\cdots,v_{\sigma(k+n-2)},v_{k+n-1}),\pr g_1(v_{\sigma(1)},\cdots,v_{\sigma(k-1)},v_{k+n},\cdots,v_{m+n})]_{\g}\\
&&\nonumber+\sum_{\sigma\in\mathbb S_{(m,n-1)}}(-1)^{mn+1}(-1)^{\sigma}[\pr g_1(v_{\sigma(1)},\cdots,v_{\sigma(m)}),\pr g_2(v_{\sigma(m+1)},\cdots,v_{\sigma(m+n-1)},v_{m+n})]_{\g}\\
&&\nonumber+\sum_{k=1}^{n}\sum_{\sigma\in\mathbb S_{(k-1,m)}}(-1)^{m(k+n-1)}(-1)^{\sigma}\pr g_2(v_{\sigma(1)},\cdots,v_{\sigma(k-1)},\rho_V^L(g_1(v_{\sigma(k)},\cdots,v_{\sigma(k+m-1)}))v_{k+m},v_{k+m+1},\cdots,v_{m+n})\\
&&\nonumber-\sum_{k=1}^{n}\sum_{\sigma\in\mathbb S_{(k-1,m,1)}\atop\sigma(k+m-1)=k+m}(-1)^{m(k+n-1)+1}(-1)^{\sigma}
\pr g_2(v_{\sigma(1)},\cdots,v_{\sigma(k-1)},\rho_V^L(g_1(v_{\sigma(k)},\cdots,v_{\sigma(k-1+m)}))v_{\sigma(k+m)},v_{k+m+1},\cdots,v_{m+n}),\\
&\stackrel{\eqref{induce-rep}}{=}&\nonumber\sum_{k=1}^{m}\sum_{\sigma\in\mathbb S_{(k-1,n)}}(-1)^{(k-1)n+1}(-1)^{\sigma}\pr g_1(v_{\sigma(1)},\cdots,v_{\sigma(k-1)},\gr_V(\pr g_2(v_{\sigma(k)},\cdots,v_{\sigma(k+n-1)}))v_{k+n},v_{k+n+1},\cdots,v_{m+n})\\
&&\nonumber-\sum_{k=2}^{m+1}\sum_{\sigma\in\mathbb S_{(k-2,n,1)}\atop \sigma(k+n-2)=k+n-1}(-1)^{kn}(-1)^{\sigma}
\pr g_1(v_{\sigma(1)},\cdots,v_{\sigma(k-2)},\gr_V(\pr g_2(v_{\sigma(k-1)},\cdots,v_{\sigma(k+n-2)}))v_{\sigma(k+n-1)},v_{k+n},\cdots,v_{m+n})\\
&&\nonumber+\sum_{k=1}^{m}\sum_{\sigma\in\mathbb S_{(k-1,n-1)}}(-1)^{(k-1)n}(-1)^{\sigma}[\pr g_2(v_{\sigma(k)},\cdots,v_{\sigma(k+n-2)},v_{k+n-1}),\pr g_1(v_{\sigma(1)},\cdots,v_{\sigma(k-1)},v_{k+n},\cdots,v_{m+n})]_{\g}\\
&&\nonumber+\sum_{\sigma\in\mathbb S_{(m,n-1)}}(-1)^{mn+1}(-1)^{\sigma}[\pr g_1(v_{\sigma(1)},\cdots,v_{\sigma(m)}),\pr g_2(v_{\sigma(m+1)},\cdots,v_{\sigma(m+n-1)},v_{m+n})]_{\g}\\
&&\nonumber+\sum_{k=1}^{n}\sum_{\sigma\in\mathbb S_{(k-1,m)}}(-1)^{m(k+n-1)}(-1)^{\sigma}\pr g_2(v_{\sigma(1)},\cdots,v_{\sigma(k-1)},\gr_V(\pr g_1(v_{\sigma(k)},\cdots,v_{\sigma(k+m-1)}))v_{k+m},v_{k+m+1},\cdots,v_{m+n})\\
&&\nonumber-\sum_{k=1}^{n}\sum_{\sigma\in\mathbb S_{(k-1,m,1)}\atop\sigma(k+m-1)=k+m}(-1)^{m(k+n-1)+1}(-1)^{\sigma}
\pr g_2(v_{\sigma(1)},\cdots,v_{\sigma(k-1)},\gr_V(\pr g_1(v_{\sigma(k)},\cdots,v_{\sigma(k-1+m)}))v_{\sigma(k+m)},v_{k+m+1},\cdots,v_{m+n}),\\
\nonumber&=&\{\beta_V(g_1),\beta_V(g_2)\}_V)(v_1,v_2,\cdots,v_{m+n}),
\end{eqnarray}
}}
we deduce that $\beta_V$ is a homomorphism of graded Lie algebras. Moreover, let $\phi:W\lon V$ be a homomorphism of symmetric representations of $(\la,[-,-]_\la)$ and $f\in\Hom(\otimes^nV,\la)$. Then we have
$$
(\pr\circ f)\circ\phi^{\otimes n}=\pr\circ(f\circ \phi^{\otimes n}),
$$
which implies that we have the following commutative diagram:
\begin{eqnarray*}
\xymatrix{
 \RB_\la(V;\rho_V^L,-\rho_V^L) \ar[d]_{ \RB_\la(\phi)}\ar[rr]^{\beta_V}
                && (\RB_{\la_\Lie}\circ\huaP_s)(V;\rho_V^L,-\rho_V^L)  \ar[d]^{(\RB_{\la_\Lie}\circ\huaP_s)(\phi)}  \\
\RB_\la(W;\rho_W^L,-\rho_W^L) \ar[rr]^{\beta_W}
                && (\RB_{\la_\Lie}\circ\huaP_s)(W;\rho_W^L,-\rho_W^L).}
\end{eqnarray*}
Thus,   $\beta$ is a natural transformation.

 Since $V$ is vector space over the field $\K$, for any positive integer $n$, the functor $\Hom(\otimes^nV,-)$ is an exact functor from the category of vector spaces over  $\K$ to itself. Moreover, we have $\alpha_V=\frki_*$ and $\beta_V=\pr_*$. By the exact sequence of Leibniz algebras \eqref{characteristic-element}, we obtain the short exact sequence of graded Lie algebras \eqref{ES-Lie}. The proof is finished.
\end{proof}

\begin{cor}\label{induce-gla-rota-baxter}
  Let $(V;\rho_V^L,-\rho_V^L)$ be  a symmetric representation of a Leibniz algebra $(\la,[-,-]_\la)$. The Maurer-Cartan elements of the graded Lie algebra $(\RB_{\la_\Lie}\circ\huaP_s)(V;\rho_V^L,-\rho_V^L)$ are exactly relative Rota-Baxter operators on the Lie algebra $\la_\Lie$ with respect to the representation $(V,\theta_V)$ given in \eqref{induce-rep}.
\end{cor}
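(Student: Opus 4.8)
The plan is to reduce the statement directly to Theorem \ref{twilled-DGLA}, combined with the object-level description of the functor $F$ recorded at the beginning of the subsection containing Theorem \ref{adjoint}. First I would unwind the definition of the graded Lie algebra in question. By the definitions of the functors involved, $\huaP_s(V;\rho_V^L,-\rho_V^L)=(V;\gr_V,-\gr_V)$ is a symmetric representation of the canonical Lie algebra $\la_\Lie$, where $\gr_V=\theta_V$ is the induced Lie algebra representation from \eqref{induce-rep}. Here one checks (immediately, since $\gr_V$ is a Lie algebra homomorphism into $\gl(V)$) that $(V;\gr_V,-\gr_V)$ genuinely satisfies the Leibniz representation axioms, as already noted in the construction of $F$. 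Applying $\RB_{\la_\Lie}$ then yields exactly the graded Lie algebra $(C^*(V,\la_\Lie),\{-,-\}_V)$, whose bracket is the derived bracket built from the semidirect product Leibniz structure $\la_\Lie\ltimes_{\gr_V,-\gr_V}V$.

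Next I would invoke Theorem \ref{twilled-DGLA}, now applied to the Lie algebra $\la_\Lie$ viewed as a Leibniz algebra and to its symmetric representation $(V;\gr_V,-\gr_V)$. This identifies the Maurer-Cartan elements of $(C^*(V,\la_\Lie),\{-,-\}_V)$ with the relative Rota-Baxter operators $T:V\lon\la_\Lie$ on the Leibniz algebra $\la_\Lie$ with respect to $(V;\gr_V,-\gr_V)$, that is, the linear maps satisfying \eqref{Rota-Baxter} with $\rho^L=\gr_V$ and $\rho^R=-\gr_V$.

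Finally I would observe that for a symmetric representation the Leibniz Rota-Baxter identity \eqref{Rota-Baxter} collapses to the Lie one: substituting $\rho^R=-\gr_V$ gives $[Tu,Tv]_{\la_\Lie}=T(\gr_V(Tu)v-\gr_V(Tv)u)$, which is precisely \eqref{eq:rbo} for the Lie algebra representation $(V;\gr_V)=(V;\theta_V)$. This is exactly the object-level content of the functor $F$ specialized to $\g=\la_\Lie$. Hence the Maurer-Cartan set coincides with the set of relative Rota-Baxter operators on the Lie algebra $\la_\Lie$ with respect to $(V;\theta_V)$, as claimed.

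I do not expect a genuine obstacle here: the statement is a formal consequence of Theorem \ref{twilled-DGLA}. The only points requiring care are keeping the two uses of the symbol $\{-,-\}_V$ apart (the bracket attached to the Leibniz algebra $\la$ versus the one attached to $\la_\Lie$), and making explicit that the reduction of \eqref{Rota-Baxter} to \eqref{eq:rbo} is what licenses replacing ``relative Rota-Baxter operator on the Leibniz algebra $\la_\Lie$'' by ``relative Rota-Baxter operator on the Lie algebra $\la_\Lie$.''
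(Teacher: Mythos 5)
Your proposal is correct and follows essentially the same route the paper intends: the paper states this as an immediate corollary of the machinery (Theorem \ref{twilled-DGLA} applied to $\la_\Lie$ with the symmetric representation $(V;\gr_V,-\gr_V)$, together with the object-level identification underlying the functor $F$), and your unwinding --- identify $(\RB_{\la_\Lie}\circ\huaP_s)(V;\rho_V^L,-\rho_V^L)$ with $(C^*(V,\la_\Lie),\{-,-\}_V)$, invoke Theorem \ref{twilled-DGLA}, then collapse \eqref{Rota-Baxter} to \eqref{eq:rbo} --- is exactly that argument made explicit. No gaps.
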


Let $(V;\rho)$ be a representation of a Lie algebra $(\g,[-,-]_\g)$. Then $(V;\rho,-\rho)$ is a symmetric representation of the Leibniz algebra $(\g,[-,-]_\g)$. By  $\Lei(\g)=0$, we obtain a graded Lie algebra $\RB_{\g}(V;\rho,-\rho)$ whose Maurer-Cartan elements are the relative Rota-Baxter operators on the Lie algebra $(\g,[-,-]_\g)$ with respect to the representation $(V;\rho)$. On the other hand, in \cite{TBGS-1} the authors construct another graded Lie algebra on the graded vector subspace $\huaC^*(V,\g):=\oplus_{k\geq 1}\Hom(\wedge^{k}V,\g),$
where the graded Lie bracket is given by
\begin{eqnarray}
&&\nonumber\Courant{P,Q}(v_1,v_2,\cdots,v_{m+n})\\
\label{o-bracket}&=&-\sum_{\sigma\in \mathbb S_{(n,1,m-1)}}(-1)^{\sigma}P(\rho(Q(v_{\sigma(1)},\cdots,v_{\sigma(n)}))v_{\sigma(n+1)},v_{\sigma(n+2)},\cdots,v_{\sigma(m+n)})\\
\nonumber&&+(-1)^{mn}\sum_{\sigma\in \mathbb S_{(m,1,n-1)}}(-1)^{\sigma}Q(\rho(P(v_{\sigma(1)},\cdots,v_{\sigma(m)}))v_{\sigma(m+1)},v_{\sigma(m+2)},\cdots,v_{\sigma(m+n)})\\
\nonumber&&-(-1)^{mn}\sum_{\sigma\in \mathbb S_{(m,n)}}(-1)^{\sigma}[P(v_{\sigma(1)},\cdots,v_{\sigma(m)}),Q(v_{\sigma(m+1)},\cdots,v_{\sigma(m+n)})]_\g
\end{eqnarray}
for all $P\in\Hom(\wedge^mV,\g)$ and $Q\in\Hom(\wedge^nV,\g)$. Its Maurer-Cartan elements are also  relative Rota-Baxter operators on $\g$ with respect to the representation $(V;\rho)$.

\begin{pro}
The graded Lie algebra $(\huaC^*(V,\g),\Courant{\cdot,\cdot})$ is a subalgebra of $\RB_{\g}(V;\rho,-\rho)$.
\end{pro}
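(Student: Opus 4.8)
The plan is to show that the graded vector space $\huaC^*(V,\g)=\oplus_{k\geq 1}\Hom(\wedge^k V,\g)$, sitting inside $C^*(V,\g)=\oplus_{n\geq 1}\Hom(\otimes^n V,\g)$ as the subspace of skew-symmetric (in the sense of $\wedge^k V$) cochains, is closed under the bracket $\{-,-\}_V$ of Theorem \ref{twilled-DGLA} specialized to the Lie algebra $\g$ and the symmetric representation $(V;\rho,-\rho)$, and that on this subspace the two brackets $\{-,-\}_V$ and $\Courant{-,-}$ agree. First I would observe that a skew-symmetric multilinear map $\Hom(\wedge^m V,\g)$ is naturally identified with a totally antisymmetric element of $\Hom(\otimes^m V,\g)$, so the inclusion $\huaC^*(V,\g)\hookrightarrow C^*(V,\g)$ is a genuine inclusion of graded vector spaces; it then suffices to verify that $\{g_1,g_2\}_V$ is again skew-symmetric when $g_1,g_2$ are, and that it equals $\Courant{g_1,g_2}$.

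The key computational step is to substitute $\la=\g$ (so $[-,-]_\la=[-,-]_\g$ is a Lie bracket), $\rho_V^L=\rho$ and $\rho_V^R=-\rho$ into the six-term formula for $\{g_1,g_2\}_V$ given in Theorem \ref{twilled-DGLA}, and then simplify when $g_1,g_2$ are skew-symmetric. Because the inputs are totally antisymmetric, each sum over an $(i,j)$-shuffle (or an $(i,j,1)$-shuffle with a pinned last position) collapses: the shuffle sum with the sign $(-1)^\sigma$ against antisymmetric arguments is, up to a combinatorial factor, the same as antisymmetrizing over the complementary shuffle type, and the separate left- and right-action terms coming from $\rho_V^L=\rho$ and $\rho_V^R=-\rho$ recombine. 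Concretely, I expect the first two sums (the $\rho^L$ and $\rho^R$ terms acting through $g_2$) to merge into the single term $-\sum_{\sigma\in\mathbb S_{(n,1,m-1)}}(-1)^\sigma P(\rho(Q(\cdots))\cdots)$ of \eqref{o-bracket}, the fifth and sixth sums (acting through $g_1$) to merge into the $+(-1)^{mn}\sum_{\sigma\in\mathbb S_{(m,1,n-1)}}$ term, and the middle two sums built from the Lie bracket $[-,-]_\g$ to merge, using the skew-symmetry of the bracket on a Lie algebra, into the single $-(-1)^{mn}\sum_{\sigma\in\mathbb S_{(m,n)}}$ term of \eqref{o-bracket}.

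The main obstacle I anticipate is the careful bookkeeping of signs and shuffle indices in this merging. In particular, the $\rho^R$-terms in $\{-,-\}_V$ carry the constraint $\sigma(k+n-2)=k+n-1$ (a pinned position) together with the sign $(-1)^{kn}$, and one must check that combining $\rho_V^R=-\rho$ with that constraint, after using antisymmetry of $g_1$ to move the pinned argument, reproduces precisely the contribution already present in the $\rho^L$-sum so that together they fill out the full shuffle set $\mathbb S_{(n,1,m-1)}$ with the correct global sign. Similarly, I would need to verify that the two Lie-bracket sums, which a priori range over $\mathbb S_{(k-1,n-1)}$ and $\mathbb S_{(m,n-1)}$, reassemble into a sum over $\mathbb S_{(m,n)}$; here the antisymmetry of $[-,-]_\g$ (absent in the general Leibniz case) is exactly what is needed. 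Once these sign identities are established, closure is automatic since $\Courant{g_1,g_2}\in\Hom(\wedge^{m+n-1}V,\g)$, and the equality of brackets shows the inclusion is a graded Lie algebra homomorphism. I would present the argument by matching \eqref{o-bracket} term-by-term against the specialized formula rather than expanding everything, relegating the shuffle-sign verifications to a remark.
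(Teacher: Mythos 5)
Your route is genuinely different from the paper's, and it could in principle be made to work, but as submitted it is a proof plan rather than a proof: the steps carrying all of the mathematical content are precisely the ones you defer (``I expect the sums to merge'', ``I would need to verify'', ``relegating the shuffle-sign verifications to a remark''). The structural matching you describe is correct: after specializing Theorem \ref{twilled-DGLA} to $\rho_V^L=\rho$, $\rho_V^R=-\rho$, the two sums acting by $\rho$ through $g_2$ recombine (using skew-symmetry of $g_1$ to move the distinguished argument into the first slot) into the $\mathbb S_{(n,1,m-1)}$-term of \eqref{o-bracket}, the two sums acting through $g_1$ give the $\mathbb S_{(m,1,n-1)}$-term, and the two bracket sums reassemble over $\mathbb S_{(m,n)}$ via antisymmetry of $[-,-]_\g$. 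But these recombinations, with their pinned-position shuffles and the signs $(-1)^{(k-1)n+1}$, $(-1)^{kn}$, $(-1)^{m(k+n-1)}$, \emph{are} the proposition; until they are carried out, nothing has been established. There is also a small degree slip: with the paper's conventions (an element of $C^n(V,\g)$ has degree $n$ and the bracket adds degrees), $\Courant{g_1,g_2}$ lies in $\Hom(\wedge^{m+n}V,\g)$, not in $\Hom(\wedge^{m+n-1}V,\g)$.

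The paper's proof bypasses the computation entirely. It observes that the semidirect product Lie algebra $\g\ltimes_{\rho}V$ and the semidirect product Leibniz algebra $\g\ltimes_{\rho,-\rho}V$ are literally the same multiplication $\mu$; that both brackets are derived brackets with respect to $\mu$, namely $\Courant{P,Q}=(-1)^{m-1}[[\mu,P]_{\NR},Q]_{\NR}$ (Nijenhuis--Richardson) and $\{P,Q\}_V=(-1)^{m-1}[[\mu,P]_{\B},Q]_{\B}$ (Balavoine); and that $\oplus_{k\geq1}\Hom(\wedge^k(\g\oplus V),\g\oplus V)$ with the NR bracket is a graded Lie subalgebra of $\oplus_{n\geq1}\Hom(\otimes^n(\g\oplus V),\g\oplus V)$ with the Balavoine bracket. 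The equality $[[\mu,P]_{\NR},Q]_{\NR}=[[\mu,P]_{\B},Q]_{\B}$ is then immediate, and the proposition follows with no shuffle bookkeeping. This also indicates the cleanest way to complete your own argument: the term-by-term merging you anticipate is exactly the statement that skew-symmetric cochains on $\g\oplus V$ are closed under the Balavoine bracket, on which it restricts to the NR bracket, transported through the derived-bracket construction. Proving (or citing) that single lemma and conjugating by the derived bracket is both shorter and far less error-prone than matching the six-term formula against \eqref{o-bracket} by hand.
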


\begin{proof}
Let $(V;\rho)$ be a representation of a Lie algebra $(\g,[-,-]_\g)$. The semidirect product Lie algebra $\g\ltimes_{\rho}V$ is the same as the semidirect product Leibniz algebra $\g\ltimes_{\rho,-\rho}V$. We denote this semidirect product multiplication by $\mu.$ Recall that the Nijenhuis-Richardson bracket $[\cdot,\cdot]_{\NR}$ associated to the direct sum vector space $\g\oplus V$ gives rise to a graded Lie algebra $(\oplus_{k\geq 1}\Hom(\wedge^k(\g\oplus V),\g\oplus V),[\cdot,\cdot]_{\NR})$. The graded Lie algebra $(\huaC^*(V,\g),\Courant{\cdot,\cdot})$ is also obtained via the derived bracket \cite{Kosmann-Schwarzbach}:
$$
  \Courant{P,Q}=(-1)^{m-1}[[\mu,P]_{\NR},Q]_{\NR},\quad\forall P\in\Hom(\wedge^mV,\g), Q\in\Hom(\wedge^nV,\g).
$$
Since $(\oplus_{k\geq 1}\Hom(\wedge^k(\g\oplus V),\g\oplus V),[\cdot,\cdot]_{\NR})$ is a graded Lie subalgebra of $(\oplus_{n\ge 1}\Hom(\otimes^n(\g\oplus V),\g\oplus V),[\cdot,\cdot]_{\B})$, we deduce that
\begin{eqnarray*}
\Courant{P,Q}&=&(-1)^{m-1}[[\mu,P]_{\NR},Q]_{\NR}=(-1)^{m-1}[[\mu,P]_{\B},Q]_{\B} =\{P,Q\}_V,
\end{eqnarray*}
which implies  that $(\huaC^*(V,\g),\Courant{\cdot,\cdot})$ is a subalgebra of $(C^*(V,\g),\{-,-\}_V)=\RB_{\g}(V;\rho,-\rho)$.
\end{proof}

\subsection{The controlling algebras of relative averaging operators on a Lie algebra}

Denote by   $\la$-$\ARep$ the category  of   antisymmetric representations of the Leibniz algebra $(\la,[-,-]_\la)$, which is a subcategory of $\la$-$\rep$.   Then we have the following functors
\begin{eqnarray*}
&&\huaI_a:\la\mbox{-}\ARep\lon  \Lei(\la)\mbox{-}\ARep\\
&&\huaP_a:\la\mbox{-}\ARep\lon  \la_\Lie\mbox{-}\ARep
\end{eqnarray*}
which are given by
\begin{itemize}
  \item the functor $\huaI_a:$ $\la\mbox{-}\ARep$ $\to$ $\Lei(\la)\mbox{-}\ARep$, which is defined  on  objects and on  morphisms respectively by
\begin{eqnarray}
\huaI_a\Big((V;\rho_V^L,0)\Big)&=&(V;\rho_V^L\circ\frki,0),\\
\huaI_a(W\stackrel{\phi}{\lon}V)&=&W\stackrel{\phi}{\lon}V,
\end{eqnarray}
 \item the functor $\huaP_a:$ $\la\mbox{-}\ARep$ $\to$ $\la_\Lie\mbox{-}\ARep$, which is defined  on  objects and on morphisms respectively by
\begin{eqnarray}
\huaP_a\Big((V;\rho_V^L,0)\Big)&=&(V;\gr_V,0),\\
\huaP_a(W\stackrel{\phi}{\lon}V)&=&W\stackrel{\phi}{\lon}V,
\end{eqnarray}
\end{itemize}
 for antisymmetric representations  $(W;\rho_W^L,0)$ and $(V;\rho_V^L,0)$   of the Leibniz algebra $(\la,[-,-]_\la)$  and $\phi\in\Hom_{\la\mbox{-}\ARep}(W,V)$.

Thus, we have three functors $\RB_{\Lei(\la)}\circ\huaI_a,~\RB_\la$ and $\RB_{\la_\Lie}\circ\huaP_a$ from the category $\la\mbox{-}\ARep$ to the category $\GLA$. Moreover, for any antisymmetric representation $(V;\rho_V^L,0)$ of the Leibniz algebra $(\la,[-,-]_\la)$, we define $$\alpha_V:(\RB_{\Lei(\la)}\circ\huaI_a)(V;\rho_V^L,0)\lon\RB_\la(V;\rho_V^L,0)$$ and $$\beta_V:\RB_\la(V;\rho_V^L,0)\lon(\RB_{\la_\Lie}\circ\huaP_a)(V;\rho_V^L,0)$$ as following:
\begin{eqnarray*}
\label{natural-transformation-1-anti}\alpha_V(g)&=&g,\quad\quad\quad\forall g\in\Hom(\otimes^nV,\Lei(\la)),\\
\label{natural-transformation-2-anti}\beta_V(f)&=&\pr\circ f,\quad\forall f\in\Hom(\otimes^nV,\la).
\end{eqnarray*}

Similar to Theorem \ref{thm:carb}, we have the following result.

\begin{thm}
With  above  notations, $\alpha$ is a natural transformation  from the functor $\RB_{\Lei(\la)}\circ\huaI_a$ to $\RB_\la$, and $\beta$ is a natural transformation  from the functor $\RB_\la$ to $\RB_{\la_\Lie}\circ\huaP_a$. Moreover, for any antisymmetric representation $(V;\rho_V^L,0)$ of the Leibniz algebra $(\la,[-,-]_\la)$, we have the following short exact sequence of graded Lie algebras:
$$
   0\lon(\RB_{\Lei(\la)}\circ\huaI_a)(V;\rho_V^L,0)\stackrel{\alpha_V}{\lon}\RB_\la(V;\rho_V^L,0)\stackrel{\beta_V}{\lon} (\RB_{\la_\Lie}\circ\huaP_a)(V;\rho_V^L,0)\lon 0.
$$
\end{thm}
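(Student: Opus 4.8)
The plan is to reproduce the argument of Theorem~\ref{thm:carb} verbatim, with symmetric representations replaced by antisymmetric ones and the functors $\huaI_s,\huaP_s$ replaced by $\huaI_a,\huaP_a$. First I would verify that $\alpha_V$ is a homomorphism of graded Lie algebras. Because $\Lei(\la)$ is an ideal of $(\la,[-,-]_\la)$, the graded subspace $C^*(V,\Lei(\la))=\oplus_{n\geq 1}\Hom(\otimes^n V,\Lei(\la))$ is closed under the bracket $\{-,-\}_V$ of Theorem~\ref{twilled-DGLA}: the action terms vanish since $\rho_V^L(\Lei(\la))=0$ by \eqref{lem:imp} and $\rho_V^R=0$, while the bracket terms land in $\Lei(\la)$ by the ideal property. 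Hence $C^*(V,\Lei(\la))$ is a graded Lie subalgebra of $\RB_\la(V;\rho_V^L,0)$ and $\alpha_V$ is the inclusion. (Note in passing that $\huaI_a(V;\rho_V^L,0)=(V;\rho_V^L\circ\frki,0)=(V;0,0)$, again by \eqref{lem:imp}, so $\huaI_a$ indeed takes values in $\Lei(\la)$-$\ARep$.) Naturality of $\alpha$ then follows from the commuting square obtained by precomposing with $\phi^{\otimes n}$ for a morphism $\phi:W\lon V$ of antisymmetric representations.

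Next I would show that $\beta_V(f)=\pr\circ f$ is a homomorphism of graded Lie algebras. The two ingredients are that $\pr:\la\lon\la_\Lie$ preserves the bracket and that, by \eqref{induce-rep}, $\gr_V(\bar x)=\rho_V^L(x)$ intertwines $\rho_V^L$ with the induced action. Applying $\pr$ termwise to the explicit formula for $\{g_1,g_2\}_V$ and invoking these two facts converts each surviving summand into the matching summand of $\{\beta_V(g_1),\beta_V(g_2)\}$ computed with $\gr_V$; the $\rho_V^R$-summands are absent because $\rho_V^R=0$, which merely shortens the computation relative to the symmetric case. Naturality of $\beta$ comes from $(\pr\circ f)\circ\phi^{\otimes n}=\pr\circ(f\circ\phi^{\otimes n})$, and one checks that $\huaP_a(V;\rho_V^L,0)=(V;\gr_V,0)$ is antisymmetric, so $\huaP_a$ lands in $\la_\Lie$-$\ARep$.

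Finally, for the exactness I would use that for each fixed $n$ the functor $\Hom(\otimes^n V,-)$ is exact on $\K$-vector spaces. Under the identifications $\alpha_V=\frki_*$ and $\beta_V=\pr_*$, the abelian extension \eqref{characteristic-element}, namely $0\longrightarrow\Lei(\la)\stackrel{\frki}{\longrightarrow}\la\stackrel{\pr}{\longrightarrow}\la_\Lie\longrightarrow 0$, produces a short exact sequence in each degree, and assembling these over $n$ gives the asserted short exact sequence of graded Lie algebras.

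I expect no essentially new obstacle beyond Theorem~\ref{thm:carb}: the content is identical and the only points needing attention are bookkeeping, namely confirming that $\huaI_a$ and $\huaP_a$ genuinely land in the antisymmetric subcategories and that setting $\rho_V^R=0$ does not interfere with $\beta_V$ preserving the bracket. The structurally decisive step is again the exactness of $\Hom(\otimes^n V,-)$, which is what promotes the abelian extension of $\la_\Lie$ by $\Lei(\la)$ to a short exact sequence of the associated controlling graded Lie algebras.
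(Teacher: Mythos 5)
Your proposal is correct and coincides with the paper's intended argument: the paper's own proof simply states that it is parallel to Theorem~\ref{thm:carb}, and your adaptation (inclusion $\alpha_V$ via the ideal property of $\Lei(\la)$ together with \eqref{lem:imp}, bracket-preservation of $\beta_V=\pr_*$ via \eqref{induce-rep}, and exactness from the exact functor $\Hom(\otimes^n V,-)$ applied to \eqref{characteristic-element}) is exactly that parallel argument. The bookkeeping remarks you add (that $\huaI_a$ and $\huaP_a$ genuinely land in the antisymmetric subcategories) are correct and harmless refinements of what the paper leaves implicit.
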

\begin{proof}
  The proof is parallel to that of Theorem \ref{thm:carb}, so  we omit the details.
\end{proof}

\begin{cor}\label{induce-gla-anti}
  Let $(V;\rho_V^L,0)$ be  an antisymmetric representation of a Leibniz algebra $(\la,[-,-]_\la)$. The Maurer-Cartan elements of the graded Lie algebra $(\RB_{\la_\Lie}\circ\huaP_a)(V;\rho_V^L,0)$ are exactly relative averaging operators on the Lie algebra $\la_\Lie$ with respect to the representation $(V,\theta_V)$ given in \eqref{induce-rep}.
\end{cor}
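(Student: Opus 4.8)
The plan is to specialize Theorem \ref{twilled-DGLA} to the case where the ambient Leibniz algebra is the canonical Lie algebra $\la_\Lie$ and the representation is the antisymmetric representation $(V;\gr_V,0)$ produced by the functor $\huaP_a$. First I would unwind the definitions: by construction
$$(\RB_{\la_\Lie}\circ\huaP_a)(V;\rho_V^L,0)=\RB_{\la_\Lie}(V;\gr_V,0)=(C^*(V,\la_\Lie),\{-,-\}_V),$$
where the derived bracket $\{-,-\}_V$ is built from the semidirect-product Leibniz multiplication $\mu$ on $\la_\Lie\oplus V$ determined by the antisymmetric representation $(V;\gr_V,0)$. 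Theorem \ref{twilled-DGLA} then identifies the Maurer-Cartan elements of this graded Lie algebra (with zero differential) as precisely the relative Rota-Baxter operators $T:V\lon\la_\Lie$ on the Leibniz algebra $\la_\Lie$ with respect to $(V;\gr_V,0)$.

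The second step is to read off what the defining equation \eqref{Rota-Baxter} becomes in this situation. For the antisymmetric representation $(V;\gr_V,\rho^R=0)$ the right-hand side of \eqref{Rota-Baxter} collapses, since the term $\rho^R(Tv)u$ vanishes, so the Maurer-Cartan condition reads
$$[Tu,Tv]_{\la_\Lie}=T(\gr_V(Tu)v),\qquad\forall u,v\in V.$$
By Lemma \ref{canonical-Lie} the bracket $[-,-]_{\la_\Lie}$ is a genuine (antisymmetric) Lie bracket, and $(V;\gr_V)$ is a representation of this Lie algebra via \eqref{induce-rep}; hence the displayed identity is exactly the relative averaging operator equation \eqref{eq:aocon}. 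Comparing the two quadratic constraints term by term yields the claimed identification of the Maurer-Cartan set with the set of relative averaging operators on $\la_\Lie$ with respect to $(V;\gr_V)$.

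I expect no serious obstacle, as the corollary is a direct specialization of the general theorem once the right-hand term $\rho^R(Tv)u$ is annihilated by $\rho^R=0$; the only point requiring (routine) care is that the antisymmetric $\la$-representation $(V;\rho_V^L,0)$ descends to a well-defined antisymmetric $\la_\Lie$-representation $(V;\gr_V,0)$, which is guaranteed by the functor $\huaP_a$ together with the factorization \eqref{induce-rep} of $\rho_V^L$ through $\pr$. This argument mirrors the object-level computation already carried out in Proposition \ref{pro:indET}, which the present statement lifts to the level of the controlling graded Lie algebras, and it is the exact antisymmetric analogue of Corollary \ref{induce-gla-rota-baxter}.
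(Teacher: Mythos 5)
Your proposal is correct and follows essentially the same route as the paper, which treats this corollary as an immediate specialization of Theorem \ref{twilled-DGLA} to the Lie algebra $\la_\Lie$ equipped with the antisymmetric representation $(V;\gr_V,0)$, where the Rota-Baxter identity \eqref{Rota-Baxter} collapses to the averaging identity \eqref{eq:aocon} since $\rho^R=0$. The routine point you flag (that $(V;\rho_V^L,0)$ descends to a well-defined $\la_\Lie$-representation via \eqref{induce-rep}) is exactly what the paper secures beforehand, so nothing is missing.
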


Let $(V;\rho)$ be a representation of a Lie algebra $(\g,[-,-]_\g)$. Then $(V;\rho,0)$ is an antisymmetric representation of the Leibniz algebra $(\g,[-,-]_\g)$. By $\Lei(\g)=0$, we obtain a graded Lie algebra $\RB_{\g}(V;\rho,0)$ whose Maurer-Cartan elements are the relative averaging operators on the Lie algebra $(\g,[-,-]_\g)$ with respect to the representation $(V;\rho)$. This graded Lie algebra is exactly the same as the one given in \cite{STZ}.

\section{Relations between the cohomologies}\label{sec:cohomology}

In this section, we establish the relations between the cohomology groups of a relative Rota-Baxter operator  on a Leibniz algebra with respect to a symmetric (resp. antisymmetric) representation and the cohomology groups of the induced  relative Rota-Baxter (resp. averaging) operator  on  the canonical Lie algebra.
\subsection{Cohomology of relative Rota-Baxter operators on Leibniz algebras}

\begin{defi}{\rm (\cite{Loday and Pirashvili})}  Let $(V;\rho^L,\rho^R)$ be a representation of a Leibniz algebra $(\la,[-,-]_{\la})$.
The {\bf  Loday-Pirashvili cohomology} of $\la$ with   coefficients in $V$ is the cohomology of the cochain complex $(C^\bullet(\la,V)=\oplus_{k=0}^{+\infty}C^k(\la,V),\partial)$, where $C^k(\la,V)=
\Hom(\otimes^k\la,V)$ and the coboundary operator
$\partial:C^k(\la,V)\longrightarrow C^
{k+1}(\la,V)$
is defined by
\begin{eqnarray*}
(\partial f)(x_1,\cdots,x_{k+1})&=&\sum_{i=1}^{k}(-1)^{i+1}\rho^L(x_i)f(x_1,\cdots,\hat{x_i},\cdots,x_{k+1})+(-1)^{k+1}\rho^R(x_{k+1})f(x_1,\cdots,x_{k})\\
                      \nonumber&&+\sum_{1\le i<j\le k+1}(-1)^if(x_1,\cdots,\hat{x_i},\cdots,x_{j-1},[x_i,x_j]_\la,x_{j+1},\cdots,x_{k+1}),
\end{eqnarray*}
for all $x_1,\cdots, x_{k+1}\in\la$. The resulting cohomology is denoted by $\huaHL^*(\la,V)$.
\end{defi}

\begin{thm}{\rm(\cite{TSZ})}\label{thm:rep}
Let $T$ be a  relative Rota-Baxter operator on a Leibniz
algebra  $(\la,[-,-]_\la)$ with respect to $(V;\rho^L,\rho^R)$. Define $[-,-]_T:\otimes ^2V\lon V$ by
\begin{eqnarray}\label{thm:rota-baxter-to-leibniz}
[u,v]_{T}=\rho^L(Tu)v+\rho^R(Tv)u,\quad u,v\in V.
\end{eqnarray}
Then $(V,[-,-]_{T})$ is a Leibniz algebra. Moreover,  define $\varrho^L,\varrho^R:V\lon\gl(\la)$ by
\begin{eqnarray}\label{rep-Leibniz}
\varrho^L(u)x=[Tu,x]_\la-T\rho^R(x)u,\quad \varrho^R(u)x=[x,Tu]_\la-T\rho^L(x)u,\quad\forall u\in V,x\in\la.
\end{eqnarray}
Then $(\la;\varrho^L,\varrho^R)$ is a representation of the Leibniz algebra $(V,[-,-]_{T})$.
\end{thm}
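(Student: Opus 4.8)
The plan is to split the statement into its two assertions and treat each in turn, using the semidirect product Leibniz algebra $\la\ltimes_{\rho^L,\rho^R}V$ of \eqref{semi-direct} as the ambient object that organizes all the computations.

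For the first assertion, that $(V,[-,-]_T)$ is a Leibniz algebra, I would observe that the graph $\Gamma_T=\{(Tu,u)\mid u\in V\}\subseteq \la\ltimes_{\rho^L,\rho^R}V$ is a subalgebra precisely when $T$ satisfies \eqref{Rota-Baxter}: indeed $[(Tu,u),(Tv,v)]_{\ltimes}=([Tu,Tv]_\la,\ \rho^L(Tu)v+\rho^R(Tv)u)$ lies in $\Gamma_T$ iff $[Tu,Tv]_\la=T(\rho^L(Tu)v+\rho^R(Tv)u)$. Transporting the bracket along the linear isomorphism $u\mapsto(Tu,u)$ then yields exactly $[-,-]_T$ of \eqref{thm:rota-baxter-to-leibniz}, so the Leibniz identity for $[-,-]_T$ is inherited from that of $\la\ltimes_{\rho^L,\rho^R}V$, and $T$ becomes a morphism of Leibniz algebras with $T[u,v]_T=[Tu,Tv]_\la$. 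A direct verification is of course also possible: expanding $[u,[v,w]_T]_T-[[u,v]_T,w]_T-[v,[u,w]_T]_T$ and repeatedly using $T[v,w]_T=[Tv,Tw]_\la$ together with the three defining axioms of the representation $(V;\rho^L,\rho^R)$ collapses everything to the single identity $\rho^R(y)\rho^L(x)=-\rho^R(y)\rho^R(x)$.

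For the second assertion I would first point out the conceptual source of the formulas \eqref{rep-Leibniz}: under the vector space decomposition $\la\ltimes_{\rho^L,\rho^R}V=\Gamma_T\oplus\la$, the quantities $\varrho^L(u)x$ and $\varrho^R(u)x$ are exactly the $\la$-components of the mixed brackets $[(Tu,u),(x,0)]_{\ltimes}$ and $[(x,0),(Tu,u)]_{\ltimes}$. Since $\la$ is a complement but \emph{not} an ideal of $\la\ltimes_{\rho^L,\rho^R}V$, the three Leibniz representation axioms
\[
\varrho^L([u,v]_T)=[\varrho^L(u),\varrho^L(v)],\quad \varrho^R([u,v]_T)=[\varrho^L(u),\varrho^R(v)],\quad \varrho^R(v)\varrho^L(u)=-\varrho^R(v)\varrho^R(u)
\]
do not follow formally and must be checked by substitution. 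I would verify each by plugging \eqref{rep-Leibniz} into both sides, expanding via the Leibniz identity of $\la$, and simplifying with \eqref{Rota-Baxter} and the representation axioms of $(V;\rho^L,\rho^R)$.

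The main obstacle is the bookkeeping in this last verification, and the crucial structural input is the Leibniz kernel $\Lei(\la)$. For instance, the third axiom amounts to evaluating $\varrho^R(v)\big((\varrho^L(u)+\varrho^R(u))x\big)$, where $(\varrho^L(u)+\varrho^R(u))x=\big([Tu,x]_\la+[x,Tu]_\la\big)-T(\rho^L(x)+\rho^R(x))u$ and the parenthesized term $z:=[Tu,x]_\la+[x,Tu]_\la$ lies in $\Lei(\la)$. Using that $\Lei(\la)$ sits in the left center of $\la$ (so $[z,Tv]_\la=0$) and that $\rho^L(\Lei(\la))=0$ by \eqref{lem:imp}, both contributions of $z$ drop out, leaving $-[Tp,Tv]_\la+T\rho^L(Tp)v$ with $p=(\rho^L(x)+\rho^R(x))u$; one last application of \eqref{Rota-Baxter} reduces this to $-T\rho^R(Tv)p$, which vanishes because $\rho^R(Tv)\rho^L(x)=-\rho^R(Tv)\rho^R(x)$. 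The remaining two axioms follow the same pattern, so the only real difficulty is organizing the cancellations, and no new idea beyond \eqref{Rota-Baxter}, the representation axioms of $V$, and the left-center/annihilation properties of $\Lei(\la)$ is required.
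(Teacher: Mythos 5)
Your proposal is correct, but note that the paper itself gives no proof of this theorem: it is quoted from \cite{TSZ}, so there is nothing in-paper to compare against. Your argument is essentially the standard one used in that literature: the graph $\Gamma_T\subseteq\la\ltimes_{\rho^L,\rho^R}V$ being a subalgebra exactly when \eqref{Rota-Baxter} holds gives the Leibniz structure $[-,-]_T$ on $V$ for free, while the representation axioms for $(\varrho^L,\varrho^R)$ must be (and are) verified by direct computation. I checked your verification of the third axiom: the decomposition $(\varrho^L(u)+\varrho^R(u))x=z-Tp$ with $z=[Tu,x]_\la+[x,Tu]_\la\in\Lei(\la)$, the use of the left-center property and of $\rho^L(\Lei(\la))=0$, and the final reduction via \eqref{Rota-Baxter} to $-T\rho^R(Tv)p=0$ are all correct; the first two axioms indeed go through by the same kind of expansion (using the Leibniz identity of $\la$, the Rota-Baxter identity on mixed brackets such as $[Tu,T\rho^R(x)v]_\la$, and the second and third representation axioms of $(V;\rho^L,\rho^R)$), and in fact they are more routine than the third since they do not even need the Leibniz-kernel observation. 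The only mild criticism is that leaving those two axioms as "the same pattern" is a little terse, but there is no gap: the computations close as claimed.
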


Let $\partial_T: C^{n}(V,\la)\longrightarrow  C^{n+1}(V,\la)$ be the corresponding Loday-Pirashvili coboundary operator of the Leibniz algebra $(V,[-,-]_{T})$ with  coefficients in the representation $(\la;\varrho^L,\varrho^R)$. More precisely, $\partial_T: C^{n}(V,\la)\longrightarrow  C^{n+1}(V,\la)$ is given by
               \begin{eqnarray*}
                && (\partial_T f)(v_1,\cdots,v_{n+1})\\&=&\sum_{i=1}^{n}(-1)^{i+1}[Tv_i,f(v_1,\cdots,\hat{v}_i,\cdots, v_{n+1})]_\la-\sum_{i=1}^{n}(-1)^{i+1}T\rho^R(f(v_1,\cdots,\hat{v}_i,\cdots, v_{n+1}))v_i\\
                &&+(-1)^{n+1}[f(v_1,\cdots,v_{n}),Tv_{n+1}]_\la+(-1)^nT\rho^L(f(v_1,\cdots,v_{n}))v_{n+1}\\
                &&+\sum_{1\le i<j\le n+1}(-1)^{i}f(v_1,\cdots,\hat{v}_i,\cdots,v_{j-1},\rho^L(Tv_i)v_j+\rho^R(Tv_j)v_i,v_{j+1},\cdots, v_{n+1}).
               \end{eqnarray*}

\begin{defi}{\rm(\cite{TSZ})}
Let $T$ be a relative Rota-Baxter operator on a Leibniz algebra $(\la,[-,-]_\la)$ with respect to a representation $(V;\rho^L,\rho^R)$. The cohomology of the cochain complex $$(C^*(V,\la)=\oplus _{k=0}^{+\infty}C^k(V,\la),\partial_T)$$ is taken to be  the {\bf cohomology   for the relative Rota-Baxter operator $T$}.
\end{defi}
We denote the set of $k$-cocycles by $\huaZ_{RB}^k(T)$,  the set of $k$-coboundaries by $\huaB_{RB}^k(T)$ and the $k$-th cohomology group by
\begin{eqnarray}\label{de:opcohl2}
\huaH_{RB}^k(T)= \huaZ_{RB}^k(T)/ \huaB_{RB}^k(T).
\end{eqnarray}

Up to a sign, the coboundary operators $\partial_T$ coincides with the differential operator $\{T,-\}_V$ defined by using  the Maurer-Cartan element $T$.

\begin{thm}\label{partial-to-derivation}{\rm(\cite{TSZ})}
 Let $T$ be a relative Rota-Baxter operator on the Leibniz algebra $(\la,[\cdot,\cdot]_\la)$ with respect to the representation $(V;\rho^L,\rho^R)$. Then we have
 $$
{\partial}_T f=(-1)^{n-1}\{T,f\}_V,\quad \forall f\in \Hom(\otimes^nV,\la),~n=1,2,\cdots.
 $$
\end{thm}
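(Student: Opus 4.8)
The plan is to prove the identity by direct computation, specializing the explicit formula for the graded Lie bracket $\{-,-\}_V$ from Theorem \ref{twilled-DGLA} and matching it term-by-term against the definition of $\partial_T$. Since $T\in C^1(V,\la)=\Hom(V,\la)$, I set $g_1=T$ (so $m=1$) and $g_2=f\in C^n(V,\la)$, so that $\{T,f\}_V\in C^{n+1}(V,\la)$ is evaluated on $n+1$ arguments $v_1,\dots,v_{n+1}$. The decisive simplification is that with $m=1$ all shuffle sets appearing as the \emph{outer} index collapse: in the first sum $\mathbb S_{(0,n)}$ is a singleton, in the second $\mathbb S_{(0,n,1)}$ is cut down by the constraint $\sigma(n)=n+1$, and in the fifth and sixth sums the sets $\mathbb S_{(k-1,1)}$ and $\mathbb S_{(k-1,1,1)}$ each amount to choosing a single distinguished variable. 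Thus the six sums of the Balavoine derived bracket reduce to expressions indexed by one position (or a pair of positions), exactly matching the shape of the five terms of $\partial_T$.

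Next I organize the correspondence. The two sums in which $g_1=T$ is the outer operator produce the terms where $T$ is applied last: the first sum yields $-T\big(\rho^L(f(v_1,\dots,v_n))v_{n+1}\big)$, matching the $(-1)^nT\rho^L(f(v_1,\dots,v_n))v_{n+1}$ term of $\partial_T$ after multiplication by the global sign $(-1)^{n-1}$, while the constrained second sum reproduces $-\sum_{i=1}^n(-1)^{i+1}T\rho^R(f(v_1,\dots,\hat v_i,\dots,v_{n+1}))v_i$. The two sums carrying the bracket $[-,-]_\la$ give $\sum_{i=1}^n(-1)^{i+1}[Tv_i,f(\dots)]_\la$ and $(-1)^{n+1}[f(v_1,\dots,v_n),Tv_{n+1}]_\la$. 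Finally the two sums in which $g_2=f$ is the outer operator assemble the inner term $\sum_{i<j}(-1)^i f(\dots,\rho^L(Tv_i)v_j+\rho^R(Tv_j)v_i,\dots)$, the fifth sum contributing the $\rho^L(Tv_i)v_j$ part and the sixth the $\rho^R(Tv_j)v_i$ part.

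The main obstacle is the sign bookkeeping: one must convert the Koszul sign $(-1)^\sigma$ of each (constrained) shuffle into the position-indexed signs $(-1)^{i+1}$, $(-1)^i$ of $\partial_T$, and check that the prefactors $(-1)^{(k-1)n}$, $(-1)^{kn}$, $(-1)^{m(k+n-1)}$ combine with the global $(-1)^{n-1}$ to the right total. Concretely, for the $\rho^R$ contribution one verifies that the shuffle with $\sigma(n+1)=i$, $\sigma(n)=n+1$, and $\sigma(1)<\cdots<\sigma(n-1)$ running over $\{1,\dots,n\}\setminus\{i\}$ has sign $(-1)^{n-i+1}$, which together with $(-1)^{kn}=(-1)^{2n}=1$ and the global $(-1)^{n-1}$ matches the coefficient $-(-1)^{i+1}$ of the corresponding term in $\partial_T$; the inner terms of the fifth and sixth sums are handled by the analogous reindexing over the pair $i<j$. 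I expect these constrained-shuffle sign computations, rather than any conceptual difficulty, to be the delicate part.

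As a structural cross-check (and an alternative route), one may shorten the bookkeeping by using that $\{T,f\}_V=[[\mu,T]_\B,f]_\B$ when $m=1$: the inner bracket $[\mu,T]_\B$ encodes precisely the twisted Leibniz bracket $[-,-]_T$ on $V$ of \eqref{thm:rota-baxter-to-leibniz} together with the representation $(\la;\varrho^L,\varrho^R)$ of \eqref{rep-Leibniz}, and the outer Balavoine bracket with $f$ is then the Loday-Pirashvili coboundary of that twisted datum up to the sign $(-1)^{n-1}$. This identifies $\partial_T$ as the Maurer-Cartan twisted differential $\{T,-\}_V$ attached to $T$, consistent with $T$ being a Maurer-Cartan element of $(C^*(V,\la),\{-,-\}_V)$ by Theorem \ref{twilled-DGLA}.
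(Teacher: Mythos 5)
Your proposal is correct. Note that this paper does not actually prove the statement --- it is quoted from \cite{TSZ} --- so there is no in-paper argument to compare against; your direct specialization of the six sums in Theorem \ref{twilled-DGLA} to $m=1$ is exactly the standard verification. Your sign bookkeeping checks out: for instance, in the second sum the constrained shuffle indeed has sign $(-1)^{n+1-i}$, which with $(-1)^{kn}=(-1)^{2n}=1$ and the global $(-1)^{n-1}$ gives $(-1)^{i}=-(-1)^{i+1}$, matching the $\rho^R$-term of $\partial_T$; the fifth and sixth sums, after the substitution $j=k+1$, assemble precisely the $\sum_{i<j}(-1)^i f(\cdots,\rho^L(Tv_i)v_j+\rho^R(Tv_j)v_i,\cdots)$ term; and your derived-bracket cross-check is also sound, since $[\mu,T]_\B$ restricted to the components $V\otimes V\to V$, $V\otimes\la\to\la$, $\la\otimes V\to\la$ recovers exactly $[-,-]_T$, $\varrho^L$ and $\varrho^R$ of \eqref{thm:rota-baxter-to-leibniz} and \eqref{rep-Leibniz}.
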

\subsection{The Loday-Pirashvili cohomology of relative Rota-Baxter operators on Lie algebras}

In \cite{TBGS-1}  the authors established  a cohomology theory of a relative Rota-Baxter operator  on a Lie algebra as the Chevalley-Eilenberg cohomology of a Lie algebra. Thus we will refer to it as the Chevalley-Eilenberg cohomology of a  relative Rota-Baxter operator on a Lie algebra.  More precisely, let $T$ be a relative Rota-Baxter operator on a Lie
algebra $\g$ with respect to a representation $(V;\rho)$.
  Define $\varrho:V\longrightarrow\gl(\g)$ by
 \begin{equation*}
   \varrho(u)(x):=[Tu,x]+T\rho(x)(u),\;\;\forall x\in \g,u\in
    V.
  \end{equation*}
Then  $(\g;\varrho)$ is a representation of the Lie algebra $(V,[-,-]_T)$ on the vector space $\g$. We denote by $(\frkC^*(V,\g),d_\CE)$ the Chevalley-Eilenberg cochain complex of the Lie algebra $(V,[-,-]_{T})$ with  coefficients in the representation $(\g;\varrho)$, where $\frkC^*(V,\g)=\oplus _{k\geq 0}\frkC^k(V,\g)$ and $\frkC^k(V,\g)=\Hom(\wedge^kV,\g)$. The cohomology of the cochain complex $(\frkC^*(V,\g),d_\CE)$ is called  the {\bf  Chevalley-Eilenberg cohomology of the relative Rota-Baxter operator } $T$.  Denote the set of $k$-cocycles by $\huaZ^k_\CE(T)$ and the set of $k$-coboundaries by $\huaB^k_\CE(T)$. Denote by
  \begin{equation}
  \huaH^k_\CE(T)=\huaZ^k_\CE(T)/\huaB^k_\CE(T), \quad k \geq 0,
  \label{eq:ocoh}
  \end{equation}
the corresponding $k$-th cohomology group.

As explained in Section \ref{sec:L}, a relative Rota-Baxter operator $T:V\lon \g$ on a Lie algebra $(\g,[-,-]_\g)$ with respect to a representation  $(V;\rho)$ can be viewed as a special relative Rota-Baxter operator  on  the Leibniz algebra $(\g,[-,-]_\g)$ with respect to the symmetric representation   $(V;\rho,-\rho)$. Thus, the above approach to define the cohomology of a relative Rota-Baxter operator  on a Leibniz algebra  can be applied to define a cohomology of a relative  Rota-Baxter operator  on a Lie algebra. More precisely, let $T:V\lon \g$ be a relative  Rota-Baxter operator on a Lie algebra $(\g,[-,-]_\g)$ with respect to a representation  $(V;\rho)$. Then $(V,[-,-]_T)$ is a Leibniz algebra, where the Leibniz bracket $[-,-]_T$ is defined by
$$
[u,v]_T=\rho(Tu)v-\rho(Tv)u,\quad \forall u,v\in V.
$$
Moreover, $(\g;\varrho^L,\varrho^R)$ is a symmetric representation of the Leibniz algebra $(V,[-,-]_T)$, where $\varrho^L,\varrho^R: V\lon\gl(\g)$ is defined by
$$
\varrho^L(u)x=[Tu,x]_\g+T\rho(x)u,\quad \varrho^R(u)x=[x,Tu]_\g-T\rho(x)u.
$$

Let $\partial_T: C^{n}(V,\g)\longrightarrow  C^{n+1}(V,\g)$ be the corresponding Loday-Pirashvili coboundary operator of the Leibniz algebra $(V,[-,-]_{T})$ with  coefficients in the symmetric representation $(\g;\varrho^L,\varrho^R)$.

 \begin{defi}
    Let $T:V\lon \g$ be a relative Rota-Baxter operator on a Lie algebra $(\g,[-,-]_\g)$ with respect to a representation  $(V;\rho)$.  The cohomology of the cochain complex $(\oplus_{n=0}^{+\infty} C^{n}(V,\g),\partial_T)$ is called the {\bf Loday-Pirashvili cohomology of the relative  Rota-Baxter operator} $T$.
 \end{defi}

 We denote the set of $k$-cocycles by $\huaZ^k_{\LP}( {T})$,  the set of $k$-coboundaries by $\huaB^k_{\LP}( {T})$ and the $k$-th cohomology group by
\begin{eqnarray}\label{de:opcohaLP}
\huaH^k_{\LP}( {T})=\huaZ^k_{\LP}( {T})/ \huaB^k_{\LP}( {T}).
\end{eqnarray}

\begin{rmk}
Note that the monomorphism $\frkC^k(V,\g)=\Hom(\wedge^kV,\g)\to C^k(V,\g)=\Hom(\otimes^kV,\g)$ induced from the natural epimorphism $\otimes^k V\to\wedge^kV$ induces isomorphisms
$$\huaH^0_{\CE}( {T})\cong\huaH^0_{\LP}( {T})\,\,\,{\rm and}\,\,\,\huaH^1_{\CE}( {T})\cong\huaH^1_{\LP}( {T})$$
and a long exact sequence
\begin{eqnarray*}
0 & \to & \huaH^2_{\CE}( {T})\to\huaH^2_{\LP}( {T})\to\huaH_{\rm rel}^0({T})\\
& \to & \huaH^3_{\CE}( {T})\to\huaH^3_{\LP}( {T})\to\huaH_{\rm rel}^1({T}\to\cdots
\end{eqnarray*}
where $\huaH_{\rm rel}^\bullet({T})$ is the cohomology of the {\it relative complex}
$C_{\rm rel}^\bullet(V,\g)$ which is by definition (up to a degree shift) the cokernel of the monomorphism
$\Hom(\wedge^kV,\g)\to\Hom(\otimes^kV,\g)$. These matters as well as a spectral sequence linking $\huaH_{\rm rel}^\bullet({T})$ and $\huaH_{\LP}^\bullet({T})$ in still another way can be found in \cite{FW}.
In particular, Theorem 2.6 in \cite{FW} shows how the vanishing of Chevalley-Eilenberg cohomology implies the vanishing of Loday-Pirashvili cohomology.
 \end{rmk}

 \begin{rmk}
 Note furthermore that the comparison between Chevalley-Eilenberg cohomologies and Loday-Pirashvili cohomologies makes sense more generally for any relative Rota-Baxter operator $T:V\to\la$ over a Leibniz algebra $\la$ as soon as the representation of $\la$ on $V$ is symmetric, because then the bracket $[-,-]_T$ on $V$ is a Lie bracket.

 In case the induced representation of $(V,[-,-]_T)$ on $\la$ is not symmetric, one may use the short exact sequence
 $$0\to \la_{\rm anti}\to \la\to \la_{\rm sym}\to 0$$
 and the relation of Loday-Pirashvili cohomology with values in an antisymmetric representation to the cohomology with values in a symmetric representation (see \cite{FW}) in order to reduce the computations to the case of symmetric coefficients.
 \end{rmk}

 Let $T:V\lon\la$ be a relative Rota-Baxter operator on a Leibniz algebra $\la$ with respect to   a symmetric representation $(V;\rho^L,\rho^R=-\rho^L)$. By Proposition \ref{pro:indRB},  $\bar{T}:=\pr\circ T:V\lon\la_\Lie$ is a relative Rota-Baxter operator on the Lie algebra $\la_\Lie$ with respect to the representation $(V;\gr)$ given by \eqref{induce-rep}. Now we establish the relationship between the cohomology groups of  the relative Rota-Baxter operator $T$ and the Loday-Pirashvili cohomology groups of  the relative Rota-Baxter operator $\bar{T}.$

\begin{lem}\label{lem:subspacesym}
  Let $T:V\lon\la$ be a relative Rota-Baxter operator on a Leibniz algebra $\la$ with respect to a symmetric representation $(V;\rho^L,\rho^R=-\rho^L)$. Then $(\Lei(\la);\varrho^L,\varrho^R)$ is a subrepresentation of $(\la;\varrho^L,\varrho^R)$.
\end{lem}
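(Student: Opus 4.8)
The plan is to verify directly that both operators $\varrho^L(u)$ and $\varrho^R(u)$ from \eqref{rep-Leibniz} map the Leibniz kernel $\Lei(\la)$ into itself for every $u\in V$; this invariance is exactly the condition for $(\Lei(\la);\varrho^L,\varrho^R)$ to be a subrepresentation of $(\la;\varrho^L,\varrho^R)$. In the present symmetric setting the defining formulas read $\varrho^L(u)x=[Tu,x]_\la-T\rho^R(x)u$ and $\varrho^R(u)x=[x,Tu]_\la-T\rho^L(x)u$, so I would evaluate each on a general element $z\in\Lei(\la)$ and treat the two summands separately.

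The three structural facts I would assemble are the following. First, $\Lei(\la)$ is a two-sided ideal of $\la$, so the left-multiplication term $[Tu,z]_\la$ automatically stays in $\Lei(\la)$. Second, $\Lei(\la)$ lies in the left center of $\la$, which follows from the left Leibniz identity: taking $y=x$, $z=w$ in $[x,[y,z]_\la]_\la=[[x,y]_\la,z]_\la+[y,[x,z]_\la]_\la$ gives $[[x,x]_\la,w]_\la=[x,[x,w]_\la]_\la-[x,[x,w]_\la]_\la=0$; consequently the term $[z,Tu]_\la$ occurring in $\varrho^R(u)z$ vanishes. Third, $\rho^L(\Lei(\la))=0$ by \eqref{lem:imp}, so $T\rho^L(z)u=0$, and—crucially—since the representation is symmetric, $\rho^R=-\rho^L$ also annihilates $\Lei(\la)$, whence $T\rho^R(z)u=0$.

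Combining these, for $z\in\Lei(\la)$ I would conclude $\varrho^L(u)z=[Tu,z]_\la\in\Lei(\la)$ and $\varrho^R(u)z=0\in\Lei(\la)$, which is the asserted stability, so $(\Lei(\la);\varrho^L,\varrho^R)$ is a subrepresentation. I do not anticipate a genuine obstacle: the computation is short once the three ingredients are in place. The only point that really uses the full hypotheses of the lemma, rather than merely the ideal property of $\Lei(\la)$, is the vanishing of $T\rho^R(z)u$, which rests precisely on the symmetry assumption $\rho^R=-\rho^L$; without it $\rho^R$ need not kill the Leibniz kernel and $\varrho^L(u)$ would not obviously preserve $\Lei(\la)$. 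I would therefore make a point of indicating exactly where symmetry is invoked.
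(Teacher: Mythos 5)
Your proof is correct and follows essentially the same route as the paper: you kill the $T\rho^R(z)u$ term in $\varrho^L(u)z$ using the symmetry $\rho^R=-\rho^L$ together with $\rho^L(\Lei(\la))=0$, and you show $\varrho^R(u)z=0$ using the left-center property of $\Lei(\la)$ plus $\rho^L(\Lei(\la))=0$, exactly as the paper does. The only cosmetic difference is that where you invoke the ideal property of $\Lei(\la)$ to conclude $[Tu,z]_\la\in\Lei(\la)$, the paper re-derives that containment inline by expanding $[Tu,[y,y]_\la]_\la$ via the Leibniz identity into $[[Tu,y]_\la,y]_\la+[y,[Tu,y]_\la]_\la$, which lies in $\Lei(\la)$ by polarization---the same content, made explicit.
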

\begin{proof}
  Since $\rho^R=-\rho^L$, by \eqref{lem:imp}, we have
  \begin{eqnarray*}
    \varrho^L(u)[y,y]_\la=[Tu,[y,y]_\la]_\la+\rho^L([y,y]_\la)u
    =[[Tu,y]_\la,y]_\la+[y,[Tu,y]_\la]_\la,
  \end{eqnarray*}
  for all $y\in\la$, which implies that $\varrho^L(u)(\Lei(\la))\subset \Lei(\la).$

  By the fact that $\Lei(\la)$ is contained in the left center of the Leibniz algebra $\la$, we have
  \begin{eqnarray*}
     \varrho^R(u)[y,y]_\la=[[y,y]_\la,Tu]_\la-T\rho^L([y,y]_\la)u=0,
  \end{eqnarray*}
  which implies that $\varrho^R(u)(\Lei(\la))=0\subset \Lei(\la)$. Therefore, $(\Lei(\la);\varrho^L,\varrho^R)$ is a subrepresentation of $(\la;\varrho^L,\varrho^R)$.
\end{proof}

Note that even though the Leibniz algebra $(V,[\cdot,\cdot]_{T})$ becomes a Lie algebra if $T$ is a relative Rota-Baxter operator on a Leibniz algebra $\la$ with respect to a symmetric representation, the representation given in Theorem \ref{thm:rep} is still a representation of $(V,[\cdot,\cdot]_{T})$ as a Leibniz algebra, and $\varrho^R\neq -\varrho^L.$ But the quotient representation $(\la_\Lie;\grl,\grr)$  is symmetric, where
$\grl:V\lon\gl(\la_\Lie)$ and $\grr:V\lon\gl(\la_\Lie)$ are given by
\begin{equation}\label{eq:repquo}
  \grl(u)\bar{x}=\overline{\varrho^L(u)x},\quad \grr(u)\bar{x}=\overline{\varrho^R(u)x},\quad \forall u\in V, x\in\la.
\end{equation}

\begin{pro}
  Let $T:V\lon\la$ be a relative Rota-Baxter operator on a Leibniz algebra $\la$ with respect to a symmetric representation $(V;\rho^L,\rho^R=-\rho^L)$. Then the quotient representation $(\la_\Lie;\grl,\grr)$ is a symmetric representation of $(V,[\cdot,\cdot]_{T})$ as a Leibniz algebra, i.e. $ \grr=-\grl$.
\end{pro}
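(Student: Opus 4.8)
The plan is to establish $\grr=-\grl$ directly at the level of $\la$ and then pass to the quotient. By the definition \eqref{eq:repquo} of the maps $\grl,\grr$, proving $\grr(u)\bar{x}=-\grl(u)\bar{x}$ for all $u\in V$ and $x\in\la$ amounts to proving the membership
\[
\varrho^L(u)x+\varrho^R(u)x\in\Lei(\la),\qquad\forall u\in V,\ x\in\la.
\]
So the first step is to reduce the statement to this inclusion, which is legitimate because $(\Lei(\la);\varrho^L,\varrho^R)$ is a subrepresentation of $(\la;\varrho^L,\varrho^R)$ by Lemma \ref{lem:subspacesym}, so that $\grl$ and $\grr$ are well defined on $\la_\Lie$ in the first place.

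Next I would compute the sum using the explicit formulas of Theorem \ref{thm:rep} specialized to the symmetric case $\rho^R=-\rho^L$. In that case $\varrho^L(u)x=[Tu,x]_\la-T\rho^R(x)u=[Tu,x]_\la+T\rho^L(x)u$, while $\varrho^R(u)x=[x,Tu]_\la-T\rho^L(x)u$ already only involves $\rho^L$. Adding the two, the terms $T\rho^L(x)u$ cancel, leaving
\[
\varrho^L(u)x+\varrho^R(u)x=[Tu,x]_\la+[x,Tu]_\la.
\]
This cancellation is precisely where the symmetry hypothesis $\rho^R=-\rho^L$ is used, and it is the key simplification of the argument.

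The final step is the observation that in any Leibniz algebra the symmetrization of the bracket lands in the Leibniz kernel. Polarizing $[z,z]_\la$ at $z=Tu+x$ gives
\[
[Tu,x]_\la+[x,Tu]_\la=[Tu+x,\,Tu+x]_\la-[Tu,Tu]_\la-[x,x]_\la\in\Lei(\la),
\]
since each term on the right is of the form $[z,z]_\la$. Combining this with the previous display yields $\varrho^L(u)x+\varrho^R(u)x\in\Lei(\la)$, which is exactly the inclusion from the first step; passing to $\la_\Lie$ then gives $\grl(u)\bar{x}+\grr(u)\bar{x}=0$, i.e. $\grr=-\grl$.

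I do not expect any serious obstacle here: once one sees that the $T\rho^L$ contributions cancel, all that remains is the polarization identity, which holds in every Leibniz algebra. The only point that deserves a moment's care is the well-definedness of $\grl,\grr$ on the quotient, but this is already furnished by Lemma \ref{lem:subspacesym}, so the proof is essentially a one-line computation once it is organized this way.
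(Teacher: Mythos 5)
Your proposal is correct and follows essentially the same route as the paper: both proofs compute $\varrho^L(u)x+\varrho^R(u)x$, observe that the $T\rho^L(x)u$ terms cancel under the symmetry hypothesis $\rho^R=-\rho^L$, and conclude that the remainder $[Tu,x]_\la+[x,Tu]_\la$ lies in $\Lei(\la)$. The only difference is that you spell out the polarization identity justifying this last membership (and cite Lemma \ref{lem:subspacesym} for well-definedness), details the paper leaves implicit.
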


\begin{proof}
For all $u\in V,x\in\la$, we have
\begin{eqnarray*}
\varrho^L(u)x+\varrho^R(u)x&=&[Tu,x]_\la-T\rho^R(x)u+[x,Tu]_\la-T\rho^L(x)u\\
                            &=&[Tu,x]_\la+[x,Tu]_\la\in\Lei(\la).
\end{eqnarray*}
Thus, we deduce that $\grr=-\grl$.
\end{proof}

Let $\overline{\partial}_{{T}}: C^{n}(V,\la_\Lie)\longrightarrow  C^{n+1}(V,\la_\Lie)$ be the corresponding Loday-Pirashvili coboundary operator of the Leibniz algebra $(V,[\cdot,\cdot]_{T})$ with  coefficients in the quotient representation $(\la_\Lie;\grl,\grr)$.

\begin{pro}\label{pro:samecomplex1}
Let $T$ be a relative Rota-Baxter operator on a Leibniz algebra $(\la,[\cdot,\cdot]_\la)$ with respect to a symmetric representation $(V;\rho^L,\rho^R=-\rho^L)$. Then
 $$
 \overline{\partial}_{{T}}=\partial_{\bar{T}}.
 $$
 That is, the Loday-Pirashvili cochain complex $(\oplus_{n=0}^{+\infty} C^{n}(V,\la_\Lie),\partial_{\bar{T}})$ associated to the relative Rota-Baxter operator $\bar{T}$ and the cochain complex $(\oplus_{n=0}^{+\infty} C^{n}(V,\la_\Lie),\overline{\partial}_{T})$ obtained by using the quotient representation $(\la_\Lie;\grl,\grr)$ are the same.
\end{pro}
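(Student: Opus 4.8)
The plan is to exploit the fact that the Loday--Pirashvili coboundary operator on $\oplus_{n}C^{n}(V,\la_\Lie)$ is completely determined by two pieces of data: the Leibniz bracket on the source space $V$, and the left and right actions on the coefficient space $\la_\Lie$. The operator $\partial_{\bar{T}}$ is built from the Leibniz bracket $[\cdot,\cdot]_{\bar{T}}$ on $V$ induced by the relative Rota-Baxter operator $\bar{T}$ on the Lie algebra $\la_\Lie$ together with the coefficient representation attached to $\bar{T}$, whereas $\overline{\partial}_{T}$ is built from $[\cdot,\cdot]_{T}$ together with the quotient representation $(\la_\Lie;\grl,\grr)$ of \eqref{eq:repquo}. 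Hence it suffices to check that both the brackets on $V$ and both pairs of coefficient actions coincide; the equality of the coboundary operators then follows formula by formula.

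First I would verify that the two Leibniz brackets on $V$ agree. The bracket underlying $\partial_{\bar{T}}$ is $[u,v]_{\bar{T}}=\gr(\bar{T}u)v-\gr(\bar{T}v)u$, while the one underlying $\overline{\partial}_{T}$ is $[u,v]_{T}=\rho^L(Tu)v+\rho^R(Tv)u$. Since $\gr(\bar{T}u)=\gr(\overline{Tu})=\rho^L(Tu)$ by \eqref{induce-rep} and $\rho^R=-\rho^L$, both reduce to $\rho^L(Tu)v-\rho^L(Tv)u$, so $[\cdot,\cdot]_{\bar{T}}=[\cdot,\cdot]_{T}$. Next I would compare the coefficient actions. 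For $\bar{T}$ viewed as a relative Rota-Baxter operator on the Lie algebra $\la_\Lie$ the symmetric actions are $\varrho^L(u)\bar{x}=[\bar{T}u,\bar{x}]_{\la_\Lie}+\bar{T}\gr(\bar{x})u$ and $\varrho^R(u)\bar{x}=[\bar{x},\bar{T}u]_{\la_\Lie}-\bar{T}\gr(\bar{x})u$. On the other side, Theorem \ref{thm:rep} together with $\rho^R=-\rho^L$ gives the Leibniz actions $\varrho^L(u)x=[Tu,x]_\la+T\rho^L(x)u$ and $\varrho^R(u)x=[x,Tu]_\la-T\rho^L(x)u$; pushing these down through $\pr$ using that $\pr$ preserves brackets, that $\pr\circ T=\bar{T}$, and that $\rho^L(x)=\gr(\bar{x})$, I obtain $\grl(u)\bar{x}=[\bar{T}u,\bar{x}]_{\la_\Lie}+\bar{T}\gr(\bar{x})u$ and $\grr(u)\bar{x}=[\bar{x},\bar{T}u]_{\la_\Lie}-\bar{T}\gr(\bar{x})u$, which are exactly the actions attached to $\bar{T}$. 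Here Lemma \ref{lem:subspacesym} is what guarantees that $\grl,\grr$ are well defined on the quotient.

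Having matched the bracket on $V$ and both coefficient actions, I would conclude that the defining formulas of $\overline{\partial}_{T}$ and $\partial_{\bar{T}}$ coincide term by term, which gives $\overline{\partial}_{T}=\partial_{\bar{T}}$ and hence the identity of the two cochain complexes. The computation is essentially routine; the only point requiring care is the bookkeeping of the left versus right actions when passing to the quotient, together with tracking how the sign coming from $\rho^R=-\rho^L$ propagates, so that the quotient actions $\grl,\grr$ land precisely on the symmetric actions determined by $\bar{T}$ rather than on a twisted variant.
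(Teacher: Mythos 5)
Your proposal is correct and follows essentially the same route as the paper's proof: reduce the equality of coboundary operators to checking that the two Leibniz brackets on $V$ coincide (via \eqref{induce-rep} and $\rho^R=-\rho^L$) and that the representation of $(V,[\cdot,\cdot]_{\bar T})$ on $\la_\Lie$ attached to $\bar{T}$ agrees with the quotient representation $(\la_\Lie;\grl,\grr)$, using that $\pr$ preserves brackets and $\bar{T}=\pr\circ T$. The computations you outline are exactly those carried out in the paper.
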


\begin{proof}Obviously, the spaces of $n$-cochains are the same. Thus we only need to show that the induced Leibniz algebra structure $[-,-]_{\bar{T}}$ on $V$ by the relative Rota-Baxter operator $\bar{T}$ is the same as $[-,-]_{{T}}$, and the representations $(\la_\Lie;\varrho^L,\varrho^R)$ and $(\la_\Lie;\grl,\grr)$ are the same.

For all $u,v\in V$, by \eqref{induce-rep}, we have
$$
[u,v]_{\bar{T}}=\theta(\bar{T}u)v-\theta(\bar{T}v)u=\rho^L(Tu)v-\rho^L(Tv)u=[u,v]_T.
$$

For any $u\in V,x\in\la$, we have
\begin{eqnarray*}
&&\varrho^L(u)(\bar{x})=[\bar{T}u,\bar{x}]_{\la_\Lie}+\bar{T}\gr(\bar{x})u=\overline{[Tu,x]_{\la}}+\overline{T\rho^L(x)u}=\grl(u)\bar{x},\\
&&\varrho^R(u)(\bar{x})=[\bar{x},\bar{T}u]_{\la_\Lie}-\bar{T}\gr(\bar{x})u=\overline{[x,Tu]_{\la}}-\overline{T\rho^L(x)u}=\grr(u)\bar{x}.
\end{eqnarray*}
Thus $
 \overline{\partial}_{{T}}=\partial_{\bar{T}}.
 $
\end{proof}

Let $T:V\lon\la$ be a relative Rota-Baxter operator on a Leibniz algebra $\la$ with respect to a symmetric representation $(V;\rho^L,\rho^R=-\rho^L)$. By Lemma \ref{lem:subspacesym}, we have the following short exact sequence of representations of the Leibniz algebra $(V,[\cdot,\cdot]_{T})$:
\begin{eqnarray}\label{ES-Lie-anti-rep-rb}
0\lon(\Lei(\la);\varrho^L,\varrho^R)\stackrel{\frki}{\lon}(\la;\varrho^L,\varrho^R)\stackrel{\pr}{\lon} (\la_\Lie;\grl,\grr)\lon 0.
\end{eqnarray}
Moreover, we have the following result:

\begin{thm}\label{thm:sequence-coh-rb}
Let $T$ be a relative Rota-Baxter operator on a Leibniz algebra $(\la,[\cdot,\cdot]_\la)$ with respect to a symmetric representation $(V;\rho^L,\rho^R=-\rho^L)$. Then there is a short exact sequence of the  cochain complexes:
\[
\small{ \xymatrix{0\ar[d] & 0\ar[d]\\
\cdots
\longrightarrow \Hom(\otimes^nV,\Lei(\la))\ar[d]^{\alpha_V} \ar[r]^{\partial} & \Hom(\otimes^{n+1}V,\Lei(\la)) \ar[d]^{\alpha_V} \longrightarrow\cdots  \\
\cdots
\longrightarrow \Hom(\otimes^nV,\la)\ar[d]^{\beta_V} \ar[r]^{\partial_T} & \Hom(\otimes^{n+1}V,\la) \ar[d]^{\beta_V} \longrightarrow\cdots\\
\cdots
\longrightarrow \Hom(\otimes^nV,\la_\Lie)\ar[d] \ar[r]^{\partial_{\bar{T}}} & \Hom(\otimes^{n+1}V,\la_\Lie)\ar[d]  \longrightarrow\cdots,\\
0 & 0
}}
\]
where $\alpha_V$ and $\beta_V$ are given by \eqref{natural-transformation-1} and \eqref{natural-transformation-2} respectively.

Consequently, there is a long exact sequence of the  cohomology groups:
\begin{equation}\label{long-exact-of-cohomology-rb}
\cdots\longrightarrow \huaHL^n(V,\Lei(\la))\stackrel{\huaH^n(\alpha_V)}{\longrightarrow}\huaH_{RB}^n(T)\stackrel{\huaH^n(\beta_V)}{\longrightarrow} \huaH_{\LP}^n(\bar{T})\stackrel{c^n}\longrightarrow \huaHL^{n+1}(V,\Lei(\la))\longrightarrow\cdots,
\end{equation}
where the connecting map $c^n$ is defined by
$c^n([h])=[\alpha_V^{-1}(\partial_T(\beta_V^{-1}(h)))],$  for all $[h]\in \huaH_{\LP}^n(\bar{T}).$
\end{thm}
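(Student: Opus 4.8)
The plan is to assemble the asserted short exact sequence of cochain complexes from ingredients already in place, and then to obtain the long exact sequence \eqref{long-exact-of-cohomology-rb} by the standard zig-zag (snake) lemma. I begin with the exactness of the columns. Since $V$ is finite-dimensional over $\K$, the functor $\Hom(\otimes^n V,-)$ is exact for every $n\geq 0$; applying it to the abelian extension of Leibniz algebras \eqref{characteristic-element}
\[
0\longrightarrow \Lei(\la)\stackrel{\frki}{\longrightarrow}\la\stackrel{\pr}{\longrightarrow}\la_\Lie\longrightarrow 0
\]
produces, in each degree $n$, the short exact sequence of vector spaces
\[
0\longrightarrow \Hom(\otimes^n V,\Lei(\la))\stackrel{\alpha_V}{\longrightarrow}\Hom(\otimes^n V,\la)\stackrel{\beta_V}{\longrightarrow}\Hom(\otimes^n V,\la_\Lie)\longrightarrow 0,
\]
where $\alpha_V=\frki_*$ and $\beta_V=\pr_*$, in accordance with \eqref{natural-transformation-1} and \eqref{natural-transformation-2}.

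It remains to check that $\alpha_V$ and $\beta_V$ are cochain maps for the three differentials $\partial$, $\partial_T$, $\partial_{\bar{T}}$. For $\alpha_V$ this is precisely Lemma \ref{lem:subspacesym}: because $(\Lei(\la);\varrho^L,\varrho^R)$ is a subrepresentation of $(\la;\varrho^L,\varrho^R)$, the Loday-Pirashvili coboundary $\partial_T$ of $(V,[-,-]_T)$ restricts to the $\Lei(\la)$-valued cochains, and this restriction is exactly the operator $\partial$ computing $\huaHL^\bullet(V,\Lei(\la))$; hence $\alpha_V\circ\partial=\partial_T\circ\alpha_V$. For $\beta_V$, the projection $\pr$ is a morphism of representations $(\la;\varrho^L,\varrho^R)\to(\la_\Lie;\grl,\grr)$ of $(V,[-,-]_T)$, directly from the definition \eqref{eq:repquo} of the quotient representation; consequently postcomposition with $\pr$ commutes with the Loday-Pirashvili differentials, i.e.\ $\beta_V\circ\partial_T=\overline{\partial}_T\circ\beta_V$. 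By Proposition \ref{pro:samecomplex1}, which identifies $\overline{\partial}_T$ with $\partial_{\bar{T}}$, this reads $\beta_V\circ\partial_T=\partial_{\bar{T}}\circ\beta_V$. This yields the displayed short exact sequence of cochain complexes.

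Finally, I would invoke the zig-zag lemma to obtain \eqref{long-exact-of-cohomology-rb}. The connecting map $c^n$ is the usual one: given a cocycle $h\in\Hom(\otimes^n V,\la_\Lie)$, lift it along the surjection $\beta_V$, apply $\partial_T$, and note that $\beta_V(\partial_T(\beta_V^{-1}(h)))=\partial_{\bar{T}}h=0$, so that $\partial_T(\beta_V^{-1}(h))$ lies in the image of $\alpha_V$ and $\alpha_V^{-1}(\partial_T(\beta_V^{-1}(h)))$ is a well-defined $\Lei(\la)$-valued cocycle representing $c^n([h])$. I do not expect any genuine difficulty in the assembly above; the technical weight sits in the two preparatory results, and the only point demanding attention is the matching of conventions, namely verifying that the restricted differential on the top row is literally the operator computing $\huaHL^\bullet(V,\Lei(\la))$ and that the quotient differential is $\partial_{\bar{T}}$---both of which are supplied by Lemma \ref{lem:subspacesym} and Proposition \ref{pro:samecomplex1}.
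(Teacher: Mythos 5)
Your proposal is correct, and its skeleton coincides with the paper's: exact columns obtained by applying the exact functor $\Hom(\otimes^nV,-)$ to the short exact sequence of coefficients, verification that $\alpha_V$ and $\beta_V$ are cochain maps, and then the zig-zag lemma with the standard connecting homomorphism. Where you genuinely differ is in how the commuting squares are verified. For $\alpha_V$, the paper evaluates $(\partial g)(v_1,\dots,v_{n+1})$ explicitly for $\Lei(\la)$-valued $g$, using that $\rho^L(\Lei(\la))=0$ and that $\Lei(\la)$ lies in the left center of $\la$, and matches the result term by term with $\partial_T(\alpha_V(g))$; you instead invoke the naturality of the Loday-Pirashvili coboundary in the coefficient representation (a standard fact, immediate from the coboundary formula), with Lemma \ref{lem:subspacesym} supplying the subrepresentation structure. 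For $\beta_V$, the paper's argument is heavier: for $n\geq 1$ it combines the derived-bracket description $\partial_T f=(-1)^{n-1}\{T,f\}_V$ of Theorem \ref{partial-to-derivation} with the fact (Theorem \ref{thm:carb}) that $\beta_V$ is a morphism of graded Lie algebras, and the degree $n=0$ must then be checked separately by hand, since the controlling graded Lie algebra contains only cochains of degree $\geq 1$. Your coefficient-naturality argument, applied to the quotient morphism $\pr\colon(\la;\varrho^L,\varrho^R)\to(\la_\Lie;\grl,\grr)$ of representations of $(V,[-,-]_T)$ (which is a morphism of representations precisely by \eqref{eq:repquo}) and combined with Proposition \ref{pro:samecomplex1}, treats all degrees uniformly and avoids the controlling-algebra machinery entirely. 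Both routes rest on the same preparatory results (Lemma \ref{lem:subspacesym}, the quotient representation \eqref{eq:repquo}, Proposition \ref{pro:samecomplex1}); yours is the more elementary and uniform, while the paper's has the side benefit of exhibiting that $\beta_V$ respects the full graded Lie algebra structure and not merely the differentials. One cosmetic remark: the appeal to finite-dimensionality of $V$ is unnecessary, since $\Hom(W,-)$ is exact on vector spaces for any $W$.
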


\begin{proof}
For any $g\in \Hom(\otimes^nV,\Lei(\la))$ and $v_1,\cdots,v_{n+1}\in V$, we have
\begin{eqnarray*}
                && \big(\alpha_V (\partial g)\big)(v_1,\cdots,v_{n+1})\\
                &\stackrel{\eqref{natural-transformation-1}}{=}&(\partial g)(v_1,\cdots,v_{n+1})\\
                &=&\sum_{i=1}^{n}(-1)^{i+1}[Tv_i,g(v_1,\cdots,\hat{v}_i,\cdots, v_{n+1})]_\la+(-1)^nT\rho^L(g(v_1,\cdots,v_{n}))v_{n+1}\\
                &&+\sum_{1\le i<j\le n+1}(-1)^{i}g(v_1,\cdots,\hat{v}_i,\cdots,v_{j-1},\rho^L(Tv_i)v_j+\rho^R(Tv_j)v_i,v_{j+1},\cdots, v_{n+1})\\
                &=&\big(\partial_T(\alpha_V(g))\big)(v_1,\cdots,v_{n+1}),
               \end{eqnarray*}
which implies that $\alpha_V$ is a homomorphism of cochain complexes. For  $n=0$ and $x\in\la,v\in V$, we have
\begin{eqnarray*}
\big(\beta_V(\partial_T x)\big)(v)=\beta_V(-[x,Tv]_\la+T\rho^L(x)v)
\stackrel{\eqref{natural-transformation-2}}{=}-\grr(v)\bar{x}=\big(\partial_{\bar{T}} \beta_V (x)\big)(v).
\end{eqnarray*}
For $n\ge 1$, by Theorem \ref{partial-to-derivation} and Theorem \ref{thm:carb}, for all  $f\in \Hom(\otimes^nV,\la)$, we have
\begin{eqnarray*}
 \beta_V(\partial_T f)=(-1)^{n-1}\beta_V\{T,f\}_V=(-1)^{n-1}\{\beta_V(T),\beta_V(f)\}_V=\partial_{\bar{T}} \beta_V (f).
\end{eqnarray*}
Thus, we deduce that $\beta_V$ is a homomorphism of cochain complexes. Since $V$ is vector space over the field $\K$, for any positive integer $n$, the functor $\Hom(\otimes^nV,-)$ is an exact functor from the category of vector spaces over  $\K$ to itself. Moreover, we have $\alpha_V=\frki_*$ and $\beta_V=\pr_*$.  By the  short exact sequence \eqref{ES-Lie-anti-rep-rb} of representations of the Leibniz algebra $(V,[-,-]_{T})$, we obtain the short exact sequence of the above cochain complexes. The proof is finished.
\end{proof}

\begin{rmk}
In the situation of Theorem \ref{thm:sequence-coh-rb}, the short exact sequence of cochain complexes can be completed into a diagram
\[
\small{ \xymatrix{
            &    0 \ar[d] & 0 \ar[d] & 0 \ar[d] & \\
0 \ar[r] & \frkC^\bullet(V,\Lei(\la)) \ar[d]  \ar[r] & C^\bullet(V,\Lei(\la)) \ar[d] \ar[r] & C_{\rm rel}^\bullet(V,\Lei(\la)) \ar[d] \ar[r] & 0 \\
0 \ar[r] &  \frkC^\bullet(V,\la) \ar[r]  \ar[d]   & C^\bullet(V,\la) \ar[d] \ar[r] & C_{\rm rel}^\bullet(V,\la) \ar[d] \ar[r] & 0 \\
0 \ar[r] &  \frkC^\bullet(V,\la_\Lie)  \ar[r]  \ar[d] & C^\bullet(V,\la_\Lie) \ar[d]  \ar[r] & C_{\rm rel}^\bullet(V,\la_\Lie) \ar[r] \ar[d]  & 0 \\
   &    0  & 0 & 0  & \\
}}
\]
which leads then to a corresponding exact diagram in cohomology (starting from degree $2$, while there are isomorphisms in degree $0$ and $1$ as before).

\end{rmk}

\emptycomment{
\begin{pro}
  Let $T:V\lon\la$ be a relative Rota-Baxter operator on the Leibniz algebra $\la$ with respect to a symmetric representation $(V;\rho^L,\rho^R=-\rho^L)$. Then $\grr=-\grl$, and $(\la_\Lie;\grl)$ is a  representation of the Lie algebra $(V,[\cdot,\cdot]_{T})$.
\end{pro}
\begin{proof}
  By Lemma \ref{lem:subspacesym}, $\grl$ and $\grr$ are well defined. We have
  \begin{eqnarray*}
 \grl(u)\bar{x}  + \grr(u)\bar{x}&=&\overline{\varrho^L(u)x}+\overline{\varrho^R(u)x}\\
 &=&\overline{\varrho^L(u)x+\varrho^R(u)x }\\
 &=&\overline{  [Tu,x]_\la-T\rho^R(x)u+  [x,Tu]_\la-T\rho^L(x)u  }\\
 &=&\overline{  [Tu,x]_\la + [x,Tu]_\la }\\
 &=&0.
  \end{eqnarray*}
  Finally similar as the proof of Proposition \ref{pro:inducedrepET}, we can prove that $(\la_\Lie;\grl)$ is a  representation of the Lie algebra $(V,[\cdot,\cdot]_{T})$.
\end{proof}
}

\emptycomment{
For the representation $(\la_\Lie;\grl)$   of the Lie algebra $(V,[\cdot,\cdot]_{T})$, denote by $$\frkC^k (V,\la_\Lie)=\Hom(\wedge^kV,\la_\Lie)$$ the set of $k$-cochains. Let $\dM_{\bar{T}}:\frkC^k (V,\la_\Lie)\lon \frkC^{k+1} (V,\la_\Lie)$ be the corresponding Chevalley-Eilenberg coboundary operator.

\begin{defi}
Let $T$ be a relative Rota-Baxter operator on a Leibniz algebra $(\la,[\cdot,\cdot]_\la)$ with respect to a symmetric representation $(V;\rho^L,\rho^R=-\rho^L)$. The cohomology of the cochain complex $(\frkC^*(V,\la_\Lie)=\oplus _{k=0}^{+\infty}\frkC^k(V,\la_\Lie),\dM_{\bar{T}})$ is taken to be  the {\bf cohomology   for the relative Rota-Baxter operator $\bar{T}$} on the Lie algebra $\la_\Lie$ with respect to the representation $(V,\theta)$ given in Proposition \ref{pro:indRB}.
\end{defi}

We denote the set of $k$-cocycles by $ Z^k_{\bar{T}}(V,\la_\Lie)$,  the set of $k$-coboundaries by $ B^k_{\bar{T}}(V,\la_\Lie)$ and the $k$-th cohomology group by
\begin{eqnarray}\label{de:opcohrb}
 H^k_{\bar{T}}(V,\la_\Lie)= Z^k_{\bar{T}}(V,\la_\Lie)/ B^k_{\bar{T}}(V,\la_\Lie).
\end{eqnarray}

The projection $\pr:\la\lon\la_\Lie$ and the antisymmetrization map   induce  a map, which we   denote by $\Phi$, from $C^k(V,\la)$ to $ \frkC^k(V,\la_\Lie)$ by
$$
\Phi(f)(u_1,\cdots,u_k)=\pr (\sum_{\sigma\in S_k} (-1)^\sigma f(u_{\sigma(1)},\cdots,u_\sigma(k))),\quad \forall f\in C^k(V,\la).
$$

\begin{thm}Let $T$ be a relative Rota-Baxter operator on a Leibniz algebra $(\la,[\cdot,\cdot]_\la)$ with respect to a symmetric representation $(V;\rho^L,\rho^R=-\rho^L)$. Then the map $\Phi$ is cochain map from the cochain complex $(C^*(V,\la )=\oplus _{k=0}^{+\infty}\frkC^k(V,\la ),\partial_{ {T}})$ to the cochain complex $(\frkC^*(V,\la_\Lie)=\oplus _{k=0}^{+\infty}\frkC^k(V,\la_\Lie),\dM_{\bar{T}})$. That is, we have the following commutative diagram:
$$
\xymatrix{
C^k(V,\la)\ar[d]_{\partial_T}  \ar[r]^{\Phi } & \frkC^k(V,\la_\Lie) \ar[d]^{\dM_{\bar{T}}}  \\
C^{k+1}(V,\la )\ar[r]^{\Phi }   &\frkC^{k+1}(V,\la_\Lie).
}$$

Consequently, the cochain map $\Phi$ induces a homomorphism $\Phi_*$ from the cohomology group $\huaH^k_{{T}}(V,\la)$ to $ H^k_{\bar{T}}(V,\la_\Lie)$.
\end{thm}
}
\subsection{Cohomology of relative averaging operators on Lie algebras}

As explained in Section \ref{sec:L}, a relative  averaging operator $T:V\lon \g$ on a Lie algebra $(\g,[-,-]_\g)$ with respect to a representation  $(V;\rho)$ can be viewed as a special relative Rota-Baxter operator  on  the Leibniz algebra $(\g,[-,-]_\g)$ with respect to the antisymmetric representation   $(V;\rho,0)$. Thus, the above approach to define the cohomology of relative Rota-Baxter operators on Leibniz algebras can be applied to define the cohomology of relative  averaging operators. More precisely, let $T:V\lon \g$ be a relative  averaging operator on a Lie algebra $(\g,[-,-]_\g)$ with respect to a representation  $(V;\rho)$. Then $(V,[-,-]_T)$ is a Leibniz algebra, where the Leibniz bracket $[-,-]_T$ is defined by
$$
[u,v]_T=\rho(Tu)v,\quad \forall u,v\in V.
$$
Moreover, $(\g;\varrho^L,\varrho^R)$ is a representation of the Leibniz algebra $(V,[-,-]_T)$, where $\varrho^L,\varrho^R: V\lon\gl(\g)$ is defined by
$$
\varrho^L(u)x=[Tu,x]_\g,\quad \varrho^R(u)x=[x,Tu]_\g-T\rho(x)u.
$$

Let $\partial_T: C^{n}(V,\g)\longrightarrow  C^{n+1}(V,\g)$ be the corresponding Loday-Pirashvili coboundary operator of the Leibniz algebra $(V,[-,-]_{T})$ with  coefficients in the representation $(\g;\varrho^L,\varrho^R)$. More precisely, $\partial_T: C^{n}(V,\g)\longrightarrow  C^{n+1}(V,\g)$ is given by
               \begin{eqnarray*}
               (\partial_T f)(v_1,\cdots,v_{n+1}) &=&\sum_{i=1}^{n}(-1)^{i+1}[Tv_i,f(v_1,\cdots,\hat{v}_i,\cdots, v_{n+1})]_\g \\
               && +(-1)^{n+1}[f(v_1,\cdots,v_{n}),Tv_{n+1}]_\g+(-1)^nT\rho(f(v_1,\cdots,v_{n}))v_{n+1}\\
                &&+\sum_{1\le i<j\le n+1}(-1)^{i}f(v_1,\cdots,\hat{v}_i,\cdots,v_{j-1},\rho (Tv_i)v_j,v_{j+1},\cdots, v_{n+1}).
               \end{eqnarray*}

 \begin{defi}
    Let $T:V\lon \g$ be a relative  averaging operator on a Lie algebra $(\g,[-,-]_\g)$ with respect to a representation  $(V;\rho)$.  The cohomology of the cochain complex $(\oplus_{n=0}^{+\infty} C^{n}(V,\g),\partial_T)$ is defined to be the {\bf cohomology of the relative  averaging operator} $T$.
 \end{defi}

 We denote the set of $k$-cocycles by $\huaZ^k_{AO}( {T})$,  the set of $k$-coboundaries by $\huaB^k_{AO}( {T})$ and the $k$-th cohomology group by
\begin{eqnarray}\label{de:opcoha1}
\huaH^k_{AO}( {T})=\huaZ^k_{AO}( {T})/ \huaB^k_{AO}( {T}).
\end{eqnarray}
 See  \cite{STZ} for more details about the cohomology theory of a relative averaging operator on a Lie algebra.

Let $T:V\lon\la$ be a relative Rota-Baxter operator on the Leibniz algebra $\la$ with respect to   an antisymmetric representation $(V;\rho^L,\rho^R=0)$. By Proposition \ref{pro:indET},  $\bar{T}:=\pr\circ T:V\lon\la_\Lie$ is a relative averaging operator on the Lie algebra $\la_\Lie$ with respect to the representation $(V;\gr)$ given by \eqref{induce-rep}. Now we establish the relationship between the cohomology groups of  the relative Rota-Baxter operator $T$ and the cohomology groups of  the relative averaging operator $\bar{T}.$

Similarly as Lemma \ref{lem:subspacesym}, we have the following result.
\begin{lem}\label{lem:subspaceanti}
  Let $T:V\lon\la$ be a relative Rota-Baxter operator on a Leibniz algebra $\la$ with respect to an antisymmetric representation $(V;\rho^L,\rho^R=0)$. Then $(\Lei(\la);\varrho^L,\varrho^R)$ is a subrepresentation of $(\la;\varrho^L,\varrho^R)$.

\end{lem}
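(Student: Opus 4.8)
The plan is to follow the proof of Lemma \ref{lem:subspacesym} almost verbatim, specializing the representation $(\la;\varrho^L,\varrho^R)$ from Theorem \ref{thm:rep} to the antisymmetric case $\rho^R=0$. Here the formulas \eqref{rep-Leibniz} simplify to $\varrho^L(u)x=[Tu,x]_\la$ and $\varrho^R(u)x=[x,Tu]_\la-T\rho^L(x)u$. Since $\Lei(\la)$ is spanned by the squares $[y,y]_\la$, to prove that $(\Lei(\la);\varrho^L,\varrho^R)$ is a subrepresentation it suffices to check the two inclusions $\varrho^L(u)[y,y]_\la\in\Lei(\la)$ and $\varrho^R(u)[y,y]_\la\in\Lei(\la)$ for all $u\in V$ and $y\in\la$.

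For the left action, I would first compute $\varrho^L(u)[y,y]_\la=[Tu,[y,y]_\la]_\la$ and then apply the (left) Leibniz identity with the entries $Tu,y,y$ to obtain $[Tu,[y,y]_\la]_\la=[[Tu,y]_\la,y]_\la+[y,[Tu,y]_\la]_\la$. Setting $z:=[Tu,y]_\la$, the right-hand side is the symmetrized bracket $[z,y]_\la+[y,z]_\la$, which lands in $\Lei(\la)$ by the polarization identity $[z,y]_\la+[y,z]_\la=[z+y,z+y]_\la-[z,z]_\la-[y,y]_\la$. Hence $\varrho^L(u)(\Lei(\la))\subset\Lei(\la)$.

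For the right action, I would invoke the two facts already available in the excerpt: the vanishing $\rho^L(\Lei(\la))=0$ from \eqref{lem:imp}, and the fact that $\Lei(\la)$ lies in the left center of $\la$, i.e. $[c,-]_\la=0$ for $c\in\Lei(\la)$ (which is precisely \eqref{lem:imp} read off the adjoint representation). These give $\varrho^R(u)[y,y]_\la=[[y,y]_\la,Tu]_\la-T\rho^L([y,y]_\la)u=0$, where the first summand vanishes by the left-center property and the second by \eqref{lem:imp}. Thus $\varrho^R(u)(\Lei(\la))=0\subset\Lei(\la)$, and together with the previous paragraph this shows $(\Lei(\la);\varrho^L,\varrho^R)$ is a subrepresentation. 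Note that since this $\varrho^R$-formula does not involve $\rho^R$, this computation is literally identical to the one in Lemma \ref{lem:subspacesym}.

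There is no genuine obstacle: the argument is a short direct computation. The one point deserving care is the recognition that a symmetrized bracket $[z,y]_\la+[y,z]_\la$ is always a sum of squares, hence lies in $\Lei(\la)$; this is exactly where the definition of $\Lei(\la)$ as the span of squares is used, and it is what makes the $\varrho^L$-invariance go through. Everything else is an immediate consequence of $\rho^L(\Lei(\la))=0$ and the left-center property of $\Lei(\la)$.
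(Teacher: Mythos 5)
Your proof is correct and follows exactly the route the paper intends: the paper gives no separate argument for this lemma, merely asserting it is ``similar to Lemma \ref{lem:subspacesym},'' and your specialization of \eqref{rep-Leibniz} to $\rho^R=0$, with the Leibniz identity plus polarization for $\varrho^L$ and the left-center property plus \eqref{lem:imp} for $\varrho^R$, is precisely that adaptation. In fact you supply one detail the paper's proof of Lemma \ref{lem:subspacesym} leaves implicit, namely that the symmetrized bracket $[z,y]_\la+[y,z]_\la$ lies in $\Lei(\la)$ by polarization.
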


By Lemma \ref{lem:subspaceanti}, we have the quotient representation $(\la_\Lie;\grl,\grr)$, where
$\grl:V\lon\gl(\la_\Lie)$ and $\grr:V\lon\gl(\la_\Lie)$ are given by \eqref{eq:repquo}.
 Let $\overline{\partial}_{{T}}: C^{n}(V,\la_\Lie)\longrightarrow  C^{n+1}(V,\la_\Lie)$ be the corresponding Loday-Pirashvili coboundary operator of the Leibniz algebra $(V,[-,-]_{T})$ with  coefficients in the quotient representation $(\la_\Lie;\grl,\grr)$. Similar to Proposition \ref{pro:samecomplex1}, we have the following result.

\begin{pro}\label{pro:samecomplex}
Let $T$ be a relative Rota-Baxter operator on a Leibniz algebra $(\la,[-,-]_\la)$ with respect to an antisymmetric representation $(V;\rho^L,\rho^R=0)$. Then
 $$
 \overline{\partial}_{{T}}=\partial_{\bar{T}}.
 $$
 That is, the cochain complex $(\oplus_{n=0}^{+\infty} C^{n}(V,\la_\Lie),\partial_{\bar{T}})$ associated to the relative averaging operator $\bar{T}$ and the cochain complex $(\oplus_{n=0}^{+\infty} C^{n}(V,\la_\Lie),\overline{\partial}_{T})$ obtained by using the quotient representation $(\la_\Lie;\grl,\grr)$ are the same.
\end{pro}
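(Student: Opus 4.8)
The plan is to mirror the proof of Proposition \ref{pro:samecomplex1}. The two coboundary operators $\overline{\partial}_{T}$ and $\partial_{\bar{T}}$ act on the same graded space $\oplus_{n\ge 0}C^n(V,\la_\Lie)$, so it suffices to check that the two pieces of data defining them coincide: the Leibniz bracket on $V$ and the Leibniz representation on $\la_\Lie$.

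First I would compare the brackets. The relative averaging operator $\bar{T}$ induces on $V$ the Leibniz bracket $[u,v]_{\bar{T}}=\gr(\bar{T}u)v$, which by the definition \eqref{induce-rep} of $\gr$ equals $\rho^L(Tu)v$. Since $\rho^R=0$, the Rota-Baxter bracket on $V$ is $[u,v]_T=\rho^L(Tu)v+\rho^R(Tv)u=\rho^L(Tu)v$, so $[-,-]_{\bar{T}}=[-,-]_T$. Next I would compare the representations. Specializing \eqref{rep-Leibniz} to $\rho^R=0$ gives $\varrho^L(u)x=[Tu,x]_\la$ and $\varrho^R(u)x=[x,Tu]_\la-T\rho^L(x)u$. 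Using the definition \eqref{eq:repquo} of the quotient representation, the fact that $\pr$ is a Leibniz algebra homomorphism, and the identity $\overline{T\rho^L(x)u}=\bar{T}\gr(\bar{x})u$ (again via \eqref{induce-rep}), I would compute
$$\grl(u)\bar{x}=\overline{[Tu,x]_\la}=[\bar{T}u,\bar{x}]_{\la_\Lie},\qquad \grr(u)\bar{x}=\overline{[x,Tu]_\la}-\overline{T\rho^L(x)u}=[\bar{x},\bar{T}u]_{\la_\Lie}-\bar{T}\gr(\bar{x})u.$$
These are exactly the formulas $\varrho^L(u)\bar{x}=[\bar{T}u,\bar{x}]_{\la_\Lie}$ and $\varrho^R(u)\bar{x}=[\bar{x},\bar{T}u]_{\la_\Lie}-\bar{T}\gr(\bar{x})u$ defining the representation of $(V,[-,-]_T)$ on $\la_\Lie$ attached to the averaging operator $\bar{T}$. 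Since both the bracket on $V$ and the representation on $\la_\Lie$ agree, the two coboundary operators coincide, giving $\overline{\partial}_{T}=\partial_{\bar{T}}$.

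This argument is essentially routine, which is why the statement is flagged as being similar to Proposition \ref{pro:samecomplex1}. The only step needing care is the right-operation term $\grr$: one must pass through \eqref{induce-rep} to rewrite $\rho^L(x)u=\gr(\bar{x})u$ \emph{before} projecting, so that $\overline{T\rho^L(x)u}=\bar{T}\gr(\bar{x})u$ matches the averaging-operator formula. This is the analogue of the bookkeeping step in the symmetric case, and it is the one place where the antisymmetry hypothesis $\rho^R=0$ is genuinely used, both in trivializing the $-T\rho^R(x)u$ contribution to $\varrho^L$ and in producing the averaging (rather than Rota-Baxter) structure on $\la_\Lie$.
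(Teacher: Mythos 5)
Your proposal is correct and follows essentially the same route as the paper's own proof: both reduce the statement to checking that $[-,-]_{\bar T}=[-,-]_T$ on $V$ and that the quotient representation $(\la_\Lie;\grl,\grr)$ coincides with the representation $(\la_\Lie;\varrho^L,\varrho^R)$ attached to the averaging operator $\bar T$, using \eqref{induce-rep}, \eqref{eq:repquo}, and the fact that $\pr$ is a Leibniz algebra homomorphism. Your extra remark on where $\rho^R=0$ enters (killing the $-T\rho^R(x)u$ term in $\varrho^L$ and turning the induced bracket into the averaging one) is accurate and matches the computations $\varrho^L(u)\bar{x}=\overline{[Tu,x]_{\la}}$ and $\varrho^R(u)\bar{x}=\overline{[x,Tu]_{\la}}-\overline{T\rho^L(x)u}$ in the paper.
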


\begin{proof}
Obviously, the spaces of $n$-cochains are the same. Thus we only need to show that the induced Leibniz algebra structure $[-,-]_{\bar{T}}$ on $V$ by the relative averaging operator $\bar{T}$ is the same as $[-,-]_{{T}}$, and the representations $(\la_\Lie;\varrho^L,\varrho^R)$ and $(\la_\Lie;\grl,\grr)$ are the same.

For all $u,v\in V$, by \eqref{induce-rep}, we have
$$
[u,v]_{\bar{T}}=\theta(\bar{T}u)v=\rho^L(Tu)v=[u,v]_T.
$$

 For any $u\in V,x\in\la$, we have
\begin{eqnarray*}
&&\varrho^L(u)\bar{x}=[\bar{T}u,\bar{x}]_{\la_\Lie}=\overline{[Tu,x]_{\la}}=\grl(u)\bar{x},\\
&&\varrho^R(u)\bar{x}=[\bar{x},\bar{T}u]_{\la_\Lie}-\bar{T}\gr(\bar{x})u=\overline{[x,Tu]_{\la}}-\overline{T\rho^L(x)u}=\grr(u)\bar{x}.
\end{eqnarray*}
Thus, $\overline{\partial}_{{T}}=\partial_{\bar{T}}.$
\end{proof}

Let $T:V\lon\la$ be a relative Rota-Baxter operator on a Leibniz algebra $\la$ with respect to an antisymmetric representation $(V;\rho^L,\rho^R=0)$. By Lemma \ref{lem:subspaceanti}, we have the following short exact sequence of representations of the Leibniz algebra $(V,[-,-]_{T})$:
\begin{eqnarray}\label{ES-Lie-anti-rep-av}
0\lon(\Lei(\la);\varrho^L,\varrho^R)\stackrel{\frki}{\lon}(\la;\varrho^L,\varrho^R)\stackrel{\pr}{\lon} (\la_\Lie;\grl,\grr)\lon 0.
\end{eqnarray}


Similar to Theorem \ref{thm:sequence-coh-rb}, we have the following result describing the relation among various cohomology groups.

\begin{thm}Let $T$ be a relative Rota-Baxter operator on a Leibniz algebra $(\la,[\cdot,\cdot]_\la)$ with respect to an antisymmetric representation $(V;\rho^L,\rho^R=0)$. Then we have a long exact sequence of cohomology groups:
\begin{equation}\label{long-exact-of-cohomology-avv}
\cdots\longrightarrow \huaHL^n(V,\Lei(\la))\stackrel{\huaH^n(\alpha_V)}{\longrightarrow}\huaH_{RB}^n(T)\stackrel{\huaH^n(\beta_V)}{\longrightarrow} \huaH_{AO}^n(\bar{T})\stackrel{c^n}\longrightarrow \huaHL^{n+1}(V,\Lei(\la))\longrightarrow\cdots.
\end{equation}
\end{thm}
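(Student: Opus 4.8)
The plan is to mirror the proof of Theorem~\ref{thm:sequence-coh-rb} verbatim, replacing the symmetric data by the antisymmetric data: I would build a short exact sequence of cochain complexes and then read off \eqref{long-exact-of-cohomology-avv} from the associated zig-zag long exact sequence. First I assemble the short exact sequence of cochain complexes. By Lemma~\ref{lem:subspaceanti} the triple $(\Lei(\la);\varrho^L,\varrho^R)$ is a subrepresentation of $(\la;\varrho^L,\varrho^R)$, with quotient $(\la_\Lie;\grl,\grr)$, giving the short exact sequence \eqref{ES-Lie-anti-rep-av} of representations of the Leibniz algebra $(V,[-,-]_T)$. Since we work over a field, the functor $\Hom(\otimes^n V,-)$ is exact, so applying it to \eqref{ES-Lie-anti-rep-av} yields for each $n$ a short exact sequence
\[
0\lon \Hom(\otimes^n V,\Lei(\la))\stackrel{\alpha_V}{\lon}\Hom(\otimes^n V,\la)\stackrel{\beta_V}{\lon}\Hom(\otimes^n V,\la_\Lie)\lon 0,
\]
where $\alpha_V=\frki_*$ and $\beta_V=\pr_*$ as in \eqref{natural-transformation-1} and \eqref{natural-transformation-2}.

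The substance of the argument is to check that $\alpha_V$ and $\beta_V$ are cochain maps, i.e.\ that they intertwine the coboundary operators $\partial$, $\partial_T$ and $\partial_{\bar{T}}$. For $\alpha_V$, being the inclusion induced by a subrepresentation, commutativity with the differentials amounts to the statement that $\partial$ is the restriction of $\partial_T$ to $\Hom(\otimes^n V,\Lei(\la))$; this is immediate from the Loday-Pirashvili coboundary formula for $(\Lei(\la);\varrho^L,\varrho^R)$, exactly as in the top square of Theorem~\ref{thm:sequence-coh-rb}. For $\beta_V$ I would invoke Proposition~\ref{pro:samecomplex}, which identifies $\partial_{\bar{T}}$ with the coboundary $\overline{\partial}_T$ attached to the quotient representation $(\la_\Lie;\grl,\grr)$; then $\beta_V=\pr_*$ commutes with the differentials because $\pr$ intertwines $(\la;\varrho^L,\varrho^R)$ with $(\la_\Lie;\grl,\grr)$. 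Alternatively, for $n\ge 1$ one writes $\partial_T f=(-1)^{n-1}\{T,f\}_V$ via Theorem~\ref{partial-to-derivation} and uses the antisymmetric analogue of Theorem~\ref{thm:carb} (that $\beta_V$ is a homomorphism of graded Lie algebras), treating $n=0$ by a direct computation.

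With the short exact sequence of cochain complexes in hand, the long exact sequence \eqref{long-exact-of-cohomology-avv} is produced by the zig-zag lemma, with connecting homomorphism $c^n([h])=[\alpha_V^{-1}(\partial_T(\beta_V^{-1}(h)))]$. I do not anticipate a genuine obstacle; the only delicate point is the verification that $\beta_V$ is a cochain map, which rests entirely on Proposition~\ref{pro:samecomplex}---the coincidence of the averaging-operator coboundary $\partial_{\bar{T}}$ on $\la_\Lie$ with the coboundary induced by the quotient representation. All required ingredients being already in place, the proof reduces to transcribing that of Theorem~\ref{thm:sequence-coh-rb} with $\huaH_{\LP}^\bullet(\bar{T})$ replaced by $\huaH_{AO}^\bullet(\bar{T})$.
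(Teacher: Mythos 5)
Your proposal is correct and follows essentially the same route as the paper, which proves this theorem precisely by transcribing the argument of Theorem~\ref{thm:sequence-coh-rb} with the antisymmetric data: Lemma~\ref{lem:subspaceanti}, the short exact sequence \eqref{ES-Lie-anti-rep-av}, Proposition~\ref{pro:samecomplex} to identify the quotient complex with the one computing $\huaH_{AO}^\bullet(\bar{T})$, and the zig-zag lemma. You correctly isolate the only nontrivial verification (that $\beta_V$ is a cochain map, via Proposition~\ref{pro:samecomplex} or via Theorem~\ref{partial-to-derivation} together with the antisymmetric analogue of Theorem~\ref{thm:carb}), so nothing is missing.
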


\emptycomment{
\begin{thm}Let $T$ be a relative Rota-Baxter operator on a Leibniz algebra $(\la,[\cdot,\cdot]_\la)$ with respect to an antisymmetric representation $(V;\rho^L,\rho^R=0)$. Then the map $\pr$ is cochain map from the cochain complex $(C^*(V,\la )=\oplus _{k=0}^{+\infty}C^k(V,\la ),\partial_{ {T}})$ to the cochain complex $(C^*(V,\la_\Lie)=\oplus _{k=0}^{+\infty}C^k(V,\la_\Lie),\partial_{\bar{T}})$. That is, we have the following commutative diagram:
$$
\xymatrix{
C^k(V,\la)\ar[d]_{\partial_T}  \ar[r]^{\pr } & C^k(V,\la_\Lie) \ar[d]^{\partial_{\bar{T}}}  \\
C^{k+1}(V,\la )\ar[r]^{\pr }   &C^{k+1}(V,\la_\Lie).
}$$

Consequently, the cochain map $\pr$ induces a homomorphism $\pr_*$ from the cohomology group $\huaH^k_{{T}}(V,\la)$ to $\huaH^k_{\bar{T}}(V,\la_\Lie)$
\end{thm}
}

\subsection{Some computations of cohomologies}

\begin{ex}\label{example-RB-1}{\rm
Let $\gl(n,\mathbb C)$ be the Lie algebra of all $n\times n$ matrices over $\mathbb C$. Then $T=\Id:\gl(n,\mathbb C)\lon\gl(n,\mathbb C)$ is a relative Rota-Baxter operator on the Lie algebra $\gl(n,\mathbb C)$ with respect to the representation $(\gl(n,\mathbb C);\rho)$, where $\rho:\gl(n,\mathbb C)\lon\gl(\gl(n,\mathbb C))$ is defined by
  $$
  \rho(A)B=AB,\quad \forall A,B\in \gl(n,\mathbb C).
  $$
Then $(\gl(n,\mathbb C),[-,-]_T)$ is the original Lie algebra $\gl(n,\mathbb C)$. Moreover, $(\gl(n,\mathbb C);\varrho)$ is a representation of the Lie algebra $\gl(n,\mathbb C)$, where $\varrho: \gl(n,\mathbb C)\lon\gl(\gl(n,\mathbb C))$ is defined by
\begin{equation}
\varrho(A)B:=[TA,B]+T\rho(B)(A)=AB=\rho(A)B,\;\;\forall A,B\in \gl(n,\mathbb C).
  \end{equation}
Thus, the cohomology of the relative Rota-Baxter operator $T$ is the Chevalley-Eilenberg cohomology of $\gl(n,\mathbb C)$ with coefficients in the representation $(\gl(n,\mathbb C);\rho)$. In fact, the Lie algebra $\gl(n,\mathbb C)$ is the direct sum $\gl(n,\mathbb C)=\sln(n,\mathbb C)\oplus{\mathbb C}$, where the factor ${\mathbb C}$ is the center and consists of diagonal matrices. The Hochschild-Serre formula \cite{HS} reads then
\begin{align*}
\huaH_\CE^n(\gl(n,\mathbb C),\gl(n,\mathbb C))=\bigoplus_{p+q=n}\huaH_\CE^p(\sln(n,\mathbb C))\otimes \huaH_\CE^q({\mathbb C},\gl(n,\mathbb C))^{\sln(n,\mathbb C)} \\
= \huaH_\CE^n(\sln(n,\mathbb C))\otimes \huaH_\CE^0({\mathbb C},\gl(n,\mathbb C))^{\sln(n,\mathbb C)}\oplus
\huaH_\CE^{n-1}(\sln(n,\mathbb C))\otimes \huaH_\CE^1({\mathbb C},\gl(n,\mathbb C))^{\sln(n,\mathbb C)}.
\end{align*}

The invariant space $\huaH_\CE^0({\mathbb C},\gl(n,\mathbb C))^{\sln(n,\mathbb C)}$ is zero, because $\huaH_\CE^0({\mathbb C},\gl(n,\mathbb C))=0$. Furthermore, we have $\huaH_\CE^1({\mathbb C},\gl(n,\mathbb C))=0$, because all elements are coboundaries. Therefore all the cohomology groups are zero.
}
\end{ex}

\begin{ex}\label{example-RB-2}{\rm
Let $\tn(n,\mathbb C)$ be the Lie algebras of all $n\times n$ upper triangular matrices over $\mathbb C$. Then $T=\Id:\tn(n,\mathbb C)\lon\tn(n,\mathbb C)$ is a relative Rota-Baxter operator on the Lie algebra $\tn(n,\mathbb C)$ with respect to the representation $(\tn(n,\mathbb C);\rho)$, where $\rho:\tn(n,\mathbb C)\lon\gl(\tn(n,\mathbb C))$ is defined by
  $$
  \rho(A)B=AB,\quad \forall A,B\in \tn(n,\mathbb C).
  $$
Thus, the cohomology of the relative Rota-Baxter operator $T$ is the Chevalley-Eilenberg cohomology of $\tn(n,\mathbb C)$ with coefficients in the representation $(\tn(n,\mathbb C);\rho)$. Recall that the Lie algebra $\tn(n,\mathbb C)$ is an extension
$$0\to \n(n,\mathbb C)\to \tn(n,\mathbb C)\to \dn(n,\mathbb C) \to 0,$$
where $\n(n,\mathbb C)$ is the Lie algebras of all $n\times n$ upper triangular nilpotent matrices and
$\dn(n,\mathbb C)$ is the Lie algebras of all $n\times n$ diagonal matrices.

The Hochschild-Serre spectral sequence \cite{HS} associated to this extension converges towards $\huaH_\CE^\bullet(\tn(n,\mathbb C),\tn(n,\mathbb C))$ and has as its second page
$$E_2^{p,q}=\huaH_\CE^p(\dn(n,\mathbb C), \huaH_\CE^q(\n(n,\mathbb C),\tn(n,\mathbb C))).$$
Let us compute this $E_2$-term for $n=2$. The Lie algebra $\n(2,\mathbb C)$ is $1$-dimensional and we have only non zero contributions from $q=0,1$. We have $\huaH_\CE^0(\n(2,\mathbb C),\tn(2,\mathbb C))=\tn(2,\mathbb C)^{\n(2,\mathbb C)}$ of dimension $2$ generated by
$$
\left(\begin{array}{cc} 1 & 0 \\ 0 & 0 \end{array}\right)\,\,\,\,{\rm and}\,\,\,\,\left(\begin{array}{cc} 0 & 1 \\ 0 & 0 \end{array}\right).
$$

From there, we obtain using Dixmier's Theorem 1 of \cite{Dix} for the nilpotent Lie algebra $\dn(2,\mathbb C)$ with values in a module which does not contains the trivial module
$$\huaH_\CE^p(\dn(2,\mathbb C), \huaH_\CE^0(\n(2,\mathbb C),\tn(2,\mathbb C)))=0,$$
for all $p$. On the other hand, $\huaH_\CE^1(\n(2,\mathbb C),\tn(2,\mathbb C))\cong\dn(2,\mathbb C)$ as all maps in $\Hom(\n(2,\mathbb C),\tn(2,\mathbb C))\cong\tn(2,\mathbb C)$ are cocycles and the coboundaries correspond to $\n(2,\mathbb C)$. From there, we obtain again with Dixmier's Theorem 1
$$\huaH_\CE^p(\dn(2,\mathbb C), \huaH_\CE^1(\n(2,\mathbb C),\tn(2,\mathbb C)))=0,$$
for all $p$. Thus again all the cohomology groups are zero. For general $n$, the whole cohomology will still vanish in case the cohomology spaces $\huaH_\CE^q(\n(n,\mathbb C),\tn(n,\mathbb C))$ do not contain the trivial $\dn(n,\mathbb C)$-module.

}
\end{ex}

\begin{ex}\label{example-RB-3}{\rm
Let $\n(n,\mathbb C)$ be the Lie algebras of all $n\times n$ upper triangular nilpotent matrices over $\mathbb C$. Then $T=\Id:\n(n,\mathbb C)\lon\n(n,\mathbb C)$ is a relative Rota-Baxter operator on the Lie algebra $\n(n,\mathbb C)$ with respect to the representation $(\n(n,\mathbb C);\rho)$, where $\rho:\n(n,\mathbb C)\lon\gl(\n(n,\mathbb C))$ is defined by
  $$
  \rho(A)B=AB,\quad \forall A,B\in \n(n,\mathbb C).
  $$
Thus, the cohomology of the relative Rota-Baxter operator $T$ is the Chevalley-Eilenberg cohomology of $\n(n,\mathbb C)$ with coefficients in the representation $(\n(n,\mathbb C);\rho)$. Since $\n(n,\mathbb C)$ is a nilpotent Lie algebra and the $\n(n,\mathbb C)$-module $\n(n,\mathbb C)$ does contain the trivial $\n(n,\mathbb C)$-module (namely the submodule generated by the matrix with non-zero entry in the upper right corner), we have by Dixmier's Theorem 2 of \cite{Dix} that
$$\dim \huaH_\CE^0(\n(n,\mathbb C),\n(n,\mathbb C))\geq 1,\,\,\,\dim \huaH_\CE^{n(n-1)/2}(\n(n,\mathbb C),\n(n,\mathbb C))\geq 1,$$
and
$$\dim \huaH_\CE^i(\n(n,\mathbb C),\n(n,\mathbb C))\geq 2,\quad\,\,\forall 0<i<\frac{n(n-1)}{2}.$$
}
\end{ex}

\begin{ex}\label{example-8}{\rm(\cite{STZ})} {\rm
Let $(\la,[\cdot,\cdot]_\la)$ be a Leibniz algebra. Then the natural projection $$T=\pr:\la\lon\la_\Lie$$ gives a relative averaging operator on the Lie algebra $\la_\Lie$ with respect to the representation $(\la;\rho)$, where $\rho:\la_\Lie\lon\gl(\la)$ is defined by
  $$
  \rho(\bar{x})y=[x,y]_\la,\quad \forall x,y\in\la.
  $$
Then $(\la,[-,-]_T)$ is the original Leibniz algebra $(\la,[-,-]_\la)$. Moreover, $(\la_\Lie;\varrho^L,\varrho^R)$ is a representation of the Leibniz algebra $(\la,[-,-]_T)$, where $\varrho^L,\varrho^R: \la\lon\gl(\la_\Lie)$ is defined by
$$
\varrho^L(x)\bar{y}=[\bar{x},\bar{y}]_{\la_\Lie},\quad \varrho^R(x)\bar{y}=[\bar{y},\bar{x}]_{\la_\Lie}-\overline{[y,x]_\la}=0.
$$
Thus, the cohomology of the relative  averaging operator $T$ is the Loday-Pirashvili cohomology of $(\la,[\cdot,\cdot]_\la)$ with   coefficients in the antisymmetric representation $(\la_\Lie;\varrho^L,\varrho^R)$.

Let now $(\la,[-,-]_\la)$ be a finite-dimensional semisimple Leibniz algebra over a field of characteristic zero.  By Theorem 4.3 of \cite{FW}, we obtain that $\huaHL^p(\la,\la_\Lie)=\{0\}$ for $p\geq 2$ and an exact sequence
$$0\to (\la_\Lie)_\anti\to \huaHL^0(\la,\la_\Lie)\to \la_\Lie^{\la_\Lie}\to \Hom_\la(\la,\la_\Lie)\to \huaHL^1(\la,\la_\Lie)\to 0.$$
As $\la_\Lie$ is already antisymmetric and $\huaHL^0(\la,\la_\Lie)=\la_\Lie$, the first (non-trivial) map in this sequence is an isomorphism. Furthermore $\la_\Lie^{\la_\Lie}=\{0\}$, because $\la_\Lie$ is semisimple. Therefore the last (non-trivial) map in this sequence is also an isomorphism (as in  Lemma 1.4 of  \cite{FW}).
}
\end{ex}

\emptycomment{

\begin{ex}\label{example-6}{\rm
Consider the $2$-dimensional Leibniz algebra $(\la,[-,-])$ given with respect to a basis $\{e_1,e_2\}$   by
\begin{eqnarray*}
[e_1,e_1]=0,\quad [e_1,e_2]=0,\quad [e_2,e_1]=e_1,\quad [e_2,e_2]=e_1.
\end{eqnarray*}
For a matrix $\left(\begin{array}{cc}a_{11}&a_{12}\\
a_{21}&a_{22}\end{array}\right)$,
define
$$
   Te_1=a_{11}e_1+a_{21}e_2,\quad Te_2=a_{12}e_1+a_{22}e_2.
$$
Then $T=\left(\begin{array}{cc}a_{11}&a_{12}\\
                                                                a_{21}&a_{22}\end{array}\right)$ is a  relative Rota-Baxter operator on $(\la,[-,-])$ with respect to the regular representation if and only if
$$
   [Te_i,Te_j]=T\big([Te_i,e_j]+[e_i,Te_j]\big),\quad \forall i,j=1,2.
   $$
By a straightforward computation, we conclude that $T$ is a relative Rota-Baxter operator
if and only if
$$
a_{11}=a_{21}=(a_{12}+a_{22})a_{22}=0.
$$
So we have the following two cases to consider:

\noindent
(i) If $a_{22}=0$, then any $a_{12}\in\mathbb C$,~ $T=a_{12}\left(\begin{array}{cc}0&1\\
   0&0\end{array}\right)$ is a relative Rota-Baxter operator on $(\la,[-,-])$ with respect to the regular representation.

\noindent
(ii) If $a_{12}+a_{22}=0$, then any $a_{12}\in\mathbb C$,~ $T=a_{12}\left(\begin{array}{cc}0&1\\
   0&-1\end{array}\right)$ is a relative Rota-Baxter operator on $(\la,[-,-])$ with respect to the regular representation.
In particular, $T_1=\left(\begin{array}{cc}0&1\\
   0&0\end{array}\right)$ and $T_2=\left(\begin{array}{cc}0&1\\
   0&-1\end{array}\right)$ are two relative Rota-Baxter operators on $(\la,[-,-])$ with respect to the regular representation.
For the relative Rota-Baxter operator $T_1$, $(\la,[-,-]_{T_1})$ is a Leibniz algebra which is given with respect to a basis $\{e_1,e_2\}$   by
\begin{eqnarray*}
[e_1,e_1]_{T_1}=[e_1,e_2]_{T_1}=[e_2,e_1]_{T_1}=0,\quad [e_2,e_2]_{T_1}=e_1.
\end{eqnarray*}
Note that the cohomology of this Leibniz algebra has been computed in Example C in \cite{FW}.

For the relative Rota-Baxter operator $T_2$, $(\la,[-,-]_{T_2})$ is a Leibniz algebra which is given with respect to a basis $\{e_1,e_2\}$   by
\begin{eqnarray*}
[e_1,e_1]_{T_2}=[e_1,e_2]_{T_2}=0,\quad [e_2,e_1]_{T_2}=[e_2,e_2]_{T_2}=-e_1.
\end{eqnarray*}
Observe that these two Leibniz algebras are the only non-Lie Leibniz algebras in dimension 2 up to isomorphism.

\emptycomment{
Moreover,  define $\varrho^L,\varrho^R:V\lon\gl(\la)$ by
\begin{eqnarray}\label{rep-Leibniz}
\varrho^L(u)x=[Tu,x]_\la-T\rho^R(x)u,\quad \varrho^R(u)x=[x,Tu]_\la-T\rho^L(x)u,\quad\forall u\in V,x\in\la.
\end{eqnarray}
Then $(\la;\varrho^L,\varrho^R)$ is a representation of the Leibniz algebra $(V,[-,-]_{T})$.

 We have
\begin{eqnarray*}
[Te_1,Te_1]=[a_{11}e_1+a_{21}e_2,a_{11}e_1+a_{21}e_2]=a_{21}(a_{11}+a_{21})e_1,
\end{eqnarray*}
and
\begin{eqnarray*}
T\big([Te_1,e_1]+[e_1,Te_1]\big)=T\big([a_{11}e_1+a_{21}e_2,e_1]\big)=a_{21}a_{11}e_1+a_{21}a_{21}e_2.
\end{eqnarray*}
Thus, we obtain $a_{21}=0$.

\begin{eqnarray*}
[Te_1,Te_2]=[a_{11}e_1+a_{21}e_2,a_{12}e_1+a_{22}e_2]=a_{21}(a_{12}+a_{22})e_1,
\end{eqnarray*}
and
\begin{eqnarray*}
T\big([Te_1,e_2]+[e_1,Te_2]\big)=T\big([a_{11}e_1+a_{21}e_2,e_2]\big)=a_{21}a_{11}e_1+a_{21}a_{21}e_2.
\end{eqnarray*}
Thus, we obtain $a_{21}=0$.

\begin{eqnarray*}
[Te_2,Te_1]=[a_{12}e_1+a_{22}e_2,a_{11}e_1+a_{21}e_2]=a_{22}(a_{11}+a_{21})e_1,
\end{eqnarray*}
and
\begin{eqnarray*}
T\big([Te_2,e_1]+[e_2,Te_1]\big)=T\big([a_{12}e_1+a_{22}e_2,e_1]+[e_2,a_{11}e_1+a_{21}e_2]\big)=(a_{11}+a_{21}+a_{22})(a_{11}e_1+a_{21}e_2).
\end{eqnarray*}
Thus, we obtain $a_{11}=0$.

\begin{eqnarray*}
[Te_2,Te_2]=[a_{12}e_1+a_{22}e_2,a_{12}e_1+a_{22}e_2]=a_{22}(a_{12}+a_{22})e_1,
\end{eqnarray*}
and
\begin{eqnarray*}
T\big([Te_2,e_2]+[e_2,Te_2]\big)=T\big([a_{12}e_1+a_{22}e_2,e_2]+[e_2,a_{12}e_1+a_{22}e_2]\big)=(a_{12}+2a_{22})(a_{11}e_1+a_{21}e_2).
\end{eqnarray*}
Thus, we obtain $a_{22}(a_{12}+a_{22})=0$.

Summarize the above discussion, we have
\begin{itemize}
     \item[\rm(i)] If $a_{22}=0$, then    $K=\left(\begin{array}{cc}a_{11}&a_{12}\\
   a_{21}&0\end{array}\right)$ is a \kup ~on $(\g,[\cdot,\cdot])$ with respect to the representation $(\g^*;L^*,-L^*-R^*)$ if and only if
   $$
   (a_{12}-a_{21})a_{12}=(a_{12}-a_{21})(a_{11}+a_{21})=0.
   $$
   More precisely, any $K=\left(\begin{array}{cc}a &b\\
b&0\end{array}\right)$ or $K=\left(\begin{array}{cc}a &0\\
-a&0\end{array}\right)$ is a \kup.
     \item[\rm(ii)] If $a_{22}\not=0$, then    $K=\left(\begin{array}{cc}a_{11}&a_{12}\\
   a_{21}&a_{22}\end{array}\right)$ is a \kup~ on $(\g,[\cdot,\cdot] )$ with respect to the representation $(\g^*;L^*,-L^*-R^*)$ if and only if
   $$
   a_{11}=-a_{12}=-a_{21}=a_{22}.
   $$
   \end{itemize}
   }}
\end{ex}
}

\vspace{2mm}
\noindent
{\bf Acknowledgements. } This research is supported by NSFC (11922110,12001228).

 \end{document}